\begin{document}
	\setlength{\baselineskip}{16pt}

	\newtheorem{theorem}{Theorem}[section]
	\newtheorem{lemma}{Lemma}[section]
	\newtheorem{proposition}{Proposition}[section]
	\newtheorem{definition}{Definition}[section]
	\newtheorem{example}{Example}[section]
	\newtheorem{corollary}{Corollary}[section]
	\newtheorem{assumption}{Assumption}[section]
	\newtheorem{remark}{Remark}[section]
	\numberwithin{equation}{section}
	\renewcommand{\labelenumi}{(\arabic{enumi})}

	\def\disp{\displaystyle}
	\def\undertex#1{$\underline{\hbox{#1}}$}
	\def\card{\mathop{\hbox{card}}}
	\def\sgn{\mathop{\hbox{sgn}}}
	\def\exp{\mathop{\hbox{exp}}}
	\def\OFP{(\Omega,{\cal F},\PP)}
	\newcommand\JM{Mierczy\'nski}
	\newcommand\RR{\ensuremath{\mathbb{R}}}
	\newcommand\EE{\ensuremath{\mathbb{E}}}
	\newcommand\CC{\ensuremath{\mathbb{C}}}
	\newcommand\QQ{\ensuremath{\mathbb{Q}}}
	\newcommand\ZZ{\ensuremath{\mathbb{Z}}}
	\newcommand\NN{\ensuremath{\mathbb{N}}}
	\newcommand\PP{\ensuremath{\mathbb{P}}}
	\newcommand\abs[1]{\ensuremath{\lvert#1\rvert}}
	\newcommand\normf[1]{\ensuremath{\lVert#1\rVert_{f}}}
	\newcommand\normfRb[1]{\ensuremath{\lVert#1\rVert_{f,R_b}}}
	\newcommand\normfRbone[1]{\ensuremath{\lVert#1\rVert_{f, R_{b_1}}}}
	\newcommand\normfRbtwo[1]{\ensuremath{\lVert#1\rVert_{f,R_{b_2}}}}
	\newcommand\normtwo[1]{\ensuremath{\lVert#1\rVert_{2}}}
	\newcommand\norminfty[1]{\ensuremath{\lVert#1\rVert_{\infty}}}
	
	\newcommand\del[1]{}
	
\begin{frontmatter}
%
%

\title{
	Dynamics of discrete random Burgers-Huxley systems: attractor convergence and finite-dimensional approximations\tnoteref{Supported}}
		\tnotetext[Supported]{The research is supported by National Natural Science Foundation of China (12371198).}



\author{Fang Su}
		\author{Xue Wang\corref{cor1}}

		
		\cortext[cor1]{Corresponding author, wangxue22@nudt.edu.cn}
		
		\address{College of Sciences, National University of Defense Technology, Changsha Hunan, 410073, P.R.China}

\begin{abstract}

In this paper, we apply the implicit Euler scheme to discretize the (random) Burgers-Huxley equation and prove the existence of a numerical attractor for the discrete Burgers-Huxley system. We establish upper semi-convergence of the numerical attractor to the global attractor as the step size tends to zero. We also provide finite-dimensional approximations for the three attractors (global, numerical and random) and prove the existence of truncated attractors as the state space dimension goes to infinity. Finally, we prove the existence of a random attractor and establish that the truncated random attractor upper semi-converges to the truncated global attractor as the noise intensity tends to zero.

\end{abstract}

\begin{keyword}
Implicit Euler scheme \sep Numerical attractor \sep Random attractor \sep Hausdorff semi-distance.\\	
\end{keyword}

\end{frontmatter}

\noindent\textbf{Mathematics Subject Classification} 34D45 $\cdot$ 37K60 $\cdot$  65L20 




\section{Introduction}\label{intro}

The Burgers-Huxley equation serves as a foundational model that captures the interplay between nonlinear convection, reaction dynamics, and diffusion processes. It reveals several captivating phenomena, including bursting oscillations \cite{Duan-2006}, population genetics \cite{Aronson-1978}, interspike intervals \cite{Liu-2005}, bifurcations, and chaos \cite{Zhang-2006}. These phenomena have applications in membrane models influenced by potassium and sodium ion fluxes \cite{Lewis-1964}. Numerous studies have investigated both the theoretical and numerical aspects of the Burgers-Huxley equation \cite{Gupt-2011, Kabeto-2021, Liu-2020}. Nevertheless, as novel scientific and technological challenges emerge, re-evaluating deterministic equations within the framework of random factors has become crucial for a more effective understanding of phenomena vulnerable to external perturbations. 
In the context of stochastic Burgers-Huxley equations, pioneering work of Mohan has established fundamental results including  the asymptotic behavior of solutions and global solvability \cite{Mohan-2020,Mohan-2022}. Most recently, Wang et al. \cite{Wang-2024-1} studied the approximation of invariant measures for stochastic Burgers-Huxley equations driven by additive Gaussian noise.
While these advances provide critical insights into solution behaviors and statistical properties, a systematic analysis of the long-term dynamics presents a significant challenge, particularly through the view of attractor theory.

Attractors provide a powerful framework for characterizing the long-term dynamics of solutions to partial differential equations.  In recent years, significant research attention has been devoted to investigating attractors in both deterministic and stochastic systems.
For example, rough differential equations \cite{Duc-2023}, porous media partial differential equations \cite{Li-2023-1}, non-autonomous Benjamin-Bona-Mahony equations \cite{Chen}, $p$-Laplace lattice dynamical systems \cite{Li-2023-2}, stochastic non-autonomous Zakharov lattice equations \cite{Li-2024-1}, Kuramoto-Sivashinsky lattice equations \cite{Li-2025-1}, Korteweg-de Vries equations \cite{Liu-2024}, stochastic discrete plate equations \cite{Han-2024}, sine-Gordon lattice equations \cite{Yang-2022}, and Navier-Stokes equations \cite{Su-2024, Bortolan-2025, Liu-2025, Zhang-2024, Enlow-2024}. To the best of our knowledge, there has been no the existence and characterization of global attractors for either the deterministic or stochastic Burgers-Huxley equations, despite their fundamental role in describing long-term dynamical behaviors.

In this paper, we consider the following Burgers-Huxley equation with the initial condition given by:
\begin{equation}\label{1.1}
\left\{\begin{array}{l}
d u(t)=\left(\nu\frac{\partial^2u(t)}{\partial x^2}-\alpha u(t)\frac{\partial u(t)}{\partial x}+\beta u(t)(1-u(t))(u(t)-\gamma)-\lambda u(t)+f \right)d t+d W(t),\\
u(x,0)=u_0(x),
\end{array}\right.
\end{equation}
where $\alpha > 0$ is the advection coefficient, $\lambda > 0$ is the damping constant, and $\gamma\in \left(0,1\right)$,  $\nu,\beta>0$ are parameters.  $f=\left(f_{i}\right)_{i \in \mathbb{Z}} \in\ell^2$ represents the determined external force, and $W(t)$ is a Wiener process defined on a complete filtered probability space $\left(\Omega, \mathcal{F},\left\{\mathcal{F}_t\right\}_{t \in \mathbb{R}}, \mathbb{P}\right)$.

When $\alpha=0$, equation \eqref{6.1} is known as the random Huxley equation, which models nerve pulse propagation in nerve fibers and wall motion in liquid crystals, as discussed in \cite{Wang-1985}. When $\beta=0$ and $\alpha=1$, equation \eqref{6.1} simplifies to the classical random viscous Burgers equation. In \cite{Zou-2017}, the authors explored the existence and uniqueness of mild solutions for the time- and space-fractional random Burgers equation influenced by multiplicative white noise. Lv and Duan \cite{Lv-2017} examined the existence of martingale and weak solutions for the space-fractional random nonlocal Burgers equation on a bounded domain. Exponential ergodicity for random Burgers equations was demonstrated in \cite{Goldys-2005}.


The goal of this paper is to investigate the existence and finite-dimensional approximation of numerical attractors for the (random) Burgers-Huxley equations, where the implicit Euler scheme (IES) is used for time discretization. For the deterministic Burgers-Huxley equation, we first prove the existence of a unique solution to the IES and show that there  exists a numerical attractor $\mathcal{A}^{\epsilon}$ for the discrete-time system, which is upper semi-convergence to the global attractor $\mathcal{A}$ of the continuous-time lattice model as the time step size $\epsilon\rightarrow 0$, which can be found in Theorem \ref{theorem3.3}. Subsequently, we consider the finite-dimensional approximation of the numerical attractor $\mathcal{A}_m^{\epsilon}$ and prove the existence of a finite-dimensional numerical attractor $\mathcal{A}_m^{\epsilon}$ and its convergence to the numerical attractor $\mathcal{A}^{\epsilon}$ of the infinite-dimensional IES as $m\rightarrow\infty$, as established in Theorem \ref{theorem4.2}.  

For the random Burgers-Huxley equation, we establish the existence of a random attractor using the Ornstein-Uhlenbeck process and tail estimates of the random solution. Next, we investigate the upper semi-continuity between the random attractor $\mathcal{A}^{\epsilon}(\omega)$ and global attractor $\mathcal{A}$ as $\epsilon\rightarrow0$, which can be found in Theorem \ref{theorem6.3}. The truncated system of the IES for the random Burgers-Huxley equation has a corresponding truncated random attractor $\mathcal{A}_{m}^{\epsilon}(\omega)$, which converges to the random attractor $\mathcal{A}^{\epsilon}(\omega)$ as $m\rightarrow\infty$, which is introduced in Theorem \ref{theorem6.2}. Finally, based on the convergence of solutions between the deterministic and random equations in the truncation sense, we demonstrate the upper semi-continuity from $\mathcal{A}_{m}^{\epsilon}(\omega)$ to the truncated global attractor $\mathcal{A}_{m}$, Theorem \ref{theorem6.4} demonstrates it.

The paper is organized as follows. In Sect. 2, we prove the existence of a unique solution for the IES of deterministic Burgers-Huxley equation by applying the contraction mapping principle and introduce the error estimate between the continuous-time solution and the discrete-time solution. In Sect. 3, we prove the existence of the numerical attractor $\mathcal{A}^{\epsilon}$ for the IES and investigate the upper convergence of the numerical attractor $\mathcal{A}^{\epsilon}$ to the global attractor $\mathcal{A}$ under the Hausdorff semi-distance. In Sect.4, we consider finite-dimensional approximation of the discrete-time lattice system, show that the truncated attractor $\mathcal{A}_m^{\epsilon}$ is upper semi-convergence to the numerical attractor $\mathcal{A}^{\epsilon}$, and derive bounds and continuity for both the numerical attractor $\mathcal{A}^{\epsilon}$ and the truncated attractor $\mathcal{A}_m^{\epsilon}$. Similar results for random Burgers-Huxley equation are provided in Sect.5.

\section{Well-posedness and discretization error}\label{sec:2}

We start with the deterministic Burgers-Huxley eqaution:
\begin{equation}\label{2.0}
\left\{\begin{array}{l}
\frac{d u(t)}{d t}=\nu\frac{\partial^2u(t)}{\partial x^2}-\alpha u(t)\frac{\partial u(t)}{\partial x}+\beta u(t)(1-u(t))(u(t)-\gamma)-\lambda u(t)+f,\\
u(x,0)=u_0(x).
\end{array}\right.
\end{equation}

According to \cite{Gu-2016}, let $\ell^2$
be the Hilbert space of real-valued bi-infinite sequences that are square summable. The inner product on $\ell^2$ is defined as
$$
(u,v)=\sum_{i\in \mathbb{Z}}u_iv_i, ~~~\forall u=(u_i)_{i\in \mathbb{Z}},~ v=(v_i)_{i\in \mathbb{Z}} \in \ell^2,
$$
and the norm $\|u\|=\sqrt{(u, u)}$. $\ell^2$ is the set of all bi-infinite sequences $u=\left(u_i\right)_{i \in \mathbb{Z}}$ such that the sum of the squares of their elements is finite, i.e.,
\begin{align*}
\ell^2:=\left\{u=\left(u_i\right)_{i \in \mathbb{Z}}:\|u\|^2=\sum_{i \in \mathbb{Z}}\left|u_i\right|^2<\infty\right\}.
\end{align*}
In the following content, we will use the $\ell^p$-norm, which is defined similarly to the $\ell^2$-norm
\begin{align*}
\ell^p:=\left\{u=\left(u_i\right)_{i \in \mathbb{Z}}:\|u\|_p^{p}=\sum_{i \in \mathbb{Z}}\left|u_i\right|^p<\infty\right\}.
\end{align*}
We introduce the operators $\Lambda$, $D^{+}$, $D^{-}$, which are linear operators from $\ell^{2}$ to $\ell^{2}$. For any $i\in \mathbb{Z}$ and $u=\left(u_i\right)_{i \in \mathbb{Z}}\in \ell^2$, they are defined as follows:
\begin{equation}\label{x2.1}
(\Lambda u)_{i}=-u_{i-1}+2u_{i}-u_{i+1}, \quad\left(D^{+} u\right)_i:=u_{i+1}-u_i, \quad\left(D^{-} u\right)_i:=u_{i-1}-u_i.
\end{equation}
Then, we have $\left(\Lambda u, u\right)=$ $-\|D^{+} u\|^2$, $\Lambda=D^{+}D^{-}=D^{-}D^{+}$, and $(D^{-}u,v)=(u,D^{+}v)$ for each $u,v\in \ell^{2}$. According to \cite{Yang-2022}, all operators are bounded on $\ell^{2}$ with $\|\Lambda\|\leq 4$ and $\|D^{+}\|=\|D^{-}\|\leq 2$. 

The lattice system of the Burgers-Huxley equation \eqref{2.0} can be written in the following form
\begin{equation}\label{2.1}
\frac{d u(t)}{d t}=\nu\Lambda u-\alpha uD^{-}u+\beta u(1-u)(u-\gamma)-\lambda u+f:=Fu,
\end{equation}
where the operator $F$ is called the vector field of the Burgers-Huxley equation. In this article, we use the IES to obtain the discrete-time system of equation \eqref{2.1} as follows:
\begin{equation}\label{2.2}
 u_{n}^{\epsilon}=u_{n-1}^{\epsilon}+\epsilon Fu_{n}^{\epsilon},
\end{equation}
where $\epsilon > 0$ is the time step, and  $u_{n}^{\epsilon}=\left(u_{n,i}^{\epsilon}\right)_{i \in \mathbb{Z}}$.

We will prove the existence of a unique solution for the IES \eqref{2.2} by applying the contraction mapping principle. First, we introduce two Lipschitz constants for the vector field of the Burgers-Huxley equation.

\begin{lemma}\label{lemma2.1} Let the operator $F$ be defined in \eqref{2.1}. Then for any $u,v\in B_r$, the following equalities hold
\begin{equation}\label{2.3}
\begin{aligned}
\|Fu\|\leq \beta r^3+(2\alpha+\beta+\beta\gamma)r^2+(4\nu+\beta\gamma+\lambda)r+\|f\|=:M_r,\hspace{3.2cm}
\end{aligned}
\end{equation}
\begin{equation}\label{2.4}
\begin{aligned}
\|Fu-Fv\|\leq L_r\|u-v\|, \quad with ~ L_r:=4\nu + 2\sqrt{5}r\alpha + \sqrt{12r^2(1+\gamma)^2+27r^4+3\gamma^2}\beta + \lambda,
\end{aligned}
\end{equation}
where
\begin{equation*}
B_r:=\{u\in \ell^2; \|u\|:=(\sum_{i \in \mathbb{Z}}u_i^2)^{1/2}\leq r \},  \quad\forall r>0.
\end{equation*}
\end{lemma}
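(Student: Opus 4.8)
The plan is to write $F$ as the sum of its four structural pieces---the diffusion $\nu\Lambda u$, the advection $-\alpha\,uD^{-}u$, the reaction $\beta\,u(1-u)(u-\gamma)$, and the damping $-\lambda u$---together with the constant $f$, and to estimate each piece separately using only the operator bounds $\|\Lambda\|\le 4$, $\|D^{+}\|=\|D^{-}\|\le 2$ recalled above and the elementary nesting $\|u\|_p\le\|u\|$ valid for every $p\ge 2$ (in particular $\|u\|_\infty\le\|u\|$). All nonlinear terms are read componentwise, e.g. $(uD^{-}u)_i=u_i(u_{i-1}-u_i)$ and, writing $g$ for the reaction nonlinearity, $g(u)_i=-u_i^3+(1+\gamma)u_i^2-\gamma u_i$.

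For the growth bound \eqref{2.3} I would proceed term by term. The diffusion and damping contributions are immediate: $\nu\|\Lambda u\|\le 4\nu\|u\|$ and $\lambda\|u\|$. For the advection term the key is the pointwise-product inequality $\|uD^{-}u\|\le\|u\|_\infty\|D^{-}u\|\le 2\|u\|^2$, giving $\alpha\|uD^{-}u\|\le 2\alpha\|u\|^2$. For the reaction term I expand $g(u)=-u^3+(1+\gamma)u^2-\gamma u$ and use $\|u^3\|=\|u\|_6^3\le\|u\|^3$ and $\|u^2\|=\|u\|_4^2\le\|u\|^2$, so that $\beta\|g(u)\|\le\beta\|u\|^3+\beta(1+\gamma)\|u\|^2+\beta\gamma\|u\|$. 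Adding the five contributions and inserting $\|u\|\le r$ collects exactly the coefficients of $M_r$.

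For the Lipschitz bound \eqref{2.4} I would subtract $Fu-Fv$ (the $f$ cancels) and split along the same four pieces; diffusion and damping again give $4\nu\|u-v\|$ and $\lambda\|u-v\|$. The crux is to handle the two nonlinear differences so that they factor \emph{linearly} through $u-v$ with coefficients bounded uniformly on $B_r$. For the advection difference I would use the componentwise identity
\[
(uD^{-}u-vD^{-}v)_i=u_i\,(u-v)_{i-1}+\big(v_{i-1}-u_i-v_i\big)\,(u-v)_i,
\]
bound $|u_i|\le r$ and $|v_{i-1}-u_i-v_i|\le 3r$, and apply $(a+b)^2\le 2(a^2+b^2)$ to the $\ell^2$ sum; this yields $\|uD^{-}u-vD^{-}v\|\le\sqrt{2(r^2+9r^2)}\,\|u-v\|=2\sqrt5\,r\,\|u-v\|$, the source of the $2\sqrt5\,r\alpha$ term. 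For the reaction difference I would factor out $u_i-v_i$,
\[
g(u)_i-g(v)_i=(u_i-v_i)\big[-(u_i^2+u_iv_i+v_i^2)+(1+\gamma)(u_i+v_i)-\gamma\big],
\]
and bound the bracket uniformly over $B_r$ by means of $(a+b+c)^2\le 3(a^2+b^2+c^2)$ together with $|u_i^2+u_iv_i+v_i^2|\le 3r^2$ and $|u_i+v_i|\le 2r$, producing the factor $\sqrt{27r^4+12(1+\gamma)^2r^2+3\gamma^2}$ and hence the $\beta$-coefficient of $L_r$. Summing the four constants returns $L_r$ exactly.

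The only genuinely delicate point is this nonlinear-difference step. A naive subtraction leaves quadratic-in-$(u-v)$ remainders that do not close into a Lipschitz estimate, so the hard part is to first rewrite each componentwise product as a linear expression in $(u-v)_{i-1}$ and $(u-v)_i$ with coefficients depending only on $u,v$, and only then to pull the supremum of those (uniformly bounded) coefficients out of the $\ell^2$ sum. The specific constants $2\sqrt5$ and the radical arise from the non-optimal Cauchy--Schwarz groupings $(a+b)^2\le 2(a^2+b^2)$ and $(a+b+c)^2\le 3(a^2+b^2+c^2)$; any sharper grouping would merely give a smaller admissible $L_r$, so these choices suffice for the stated inequality.
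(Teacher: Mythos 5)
Your proposal is correct and follows essentially the same route as the paper's proof: the same term-by-term splitting with the operator bounds $\|\Lambda\|\le 4$, $\|D^{-}\|\le 2$, the same linear-in-$(u-v)$ rewriting of the advection difference (your grouping $u_i(u-v)_{i-1}+(v_{i-1}-u_i-v_i)(u-v)_i$ is an equivalent rearrangement of the paper's $(u_{i-1}-u_i-v_i)(u_i-v_i)+v_i(u_{i-1}-v_{i-1})$, yielding the identical constant $20r^2$), and the same factorization of the cubic difference with $(a+b+c)^2\le 3(a^2+b^2+c^2)$ producing the same radical. No gaps; the constants $2\sqrt5\,r$ and $\sqrt{12r^2(1+\gamma)^2+27r^4+3\gamma^2}$ match the paper exactly.
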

\begin{proof}
For any $r>0$, let $u\in B_r$, we obtain
\begin{equation*}
\begin{aligned}
\|Fu\|&=\|\nu\Lambda u-\alpha uD^{-}u+\beta u(1-u)(u-\gamma)-\lambda u+f\|\\
&\leq \nu\|\Lambda u\|+\alpha \|uD^{-}u\|+\beta \|u(1-u)(u-\gamma)\|+\lambda \|u\|+\|f\|\\
&\leq\beta \|u\|^3+(2\alpha+\beta+\beta\gamma)\|u\|^2+(4\nu+\beta\gamma+\lambda)\|u\|+\|f\|\\
&\leq\beta r^3+(2\alpha+\beta+\beta\gamma)r^2+(4\nu+\beta\gamma+\lambda)r+\|f\|=:M_r.
\end{aligned}
\end{equation*}
Suppose that $u,v\in B_r$, then we get
\begin{equation*}
\begin{aligned}\|uD^{-}u-vD^{-}v\|^2&=\sum_{i \in \mathbb{Z}}|u_i(u_{i-1}-u_i)-v_i(v_{i-1}-v_i)|^2\\
&=\sum_{i \in \mathbb{Z}}|(u_{i-1}-u_i-v_i)(u_i-v_i)+v_i(u_{i-1}-v_{i-1})|^2\\
&\leq2\times(3r)^2\sum_{i \in \mathbb{Z}}|u_i-v_i|^2+2r^2\sum_{i \in \mathbb{Z}}|u_{i-1}-v_{i-1}|^2\\ &=20r^2\|u-v\|^2,
\end{aligned}
\end{equation*}
here we utilize the inequality $|u_i|\leq\|u\|$ for any $i \in \mathbb{Z}$, and then
\begin{equation}\label{2.5}
\|uD^{-}u-vD^{-}v\|=2\sqrt{5}r\|u-v\|,~~~\forall u,v\in B_r.
\end{equation}
We can also conclude that
\begin{equation*}
\begin{aligned}
\|u(1-u)(u-\gamma)-v(1-v)(v-\gamma)\|^2&=\sum_{i \in \mathbb{Z}}|u_i(1-u_i)(u_i-\gamma)-v_i(1-v_i)(v_i-\gamma)|^2\\
&=\sum_{i \in \mathbb{Z}}|-u_i^3+(1+\gamma)u_i^2-\gamma u_i+v_i^3+(1+\gamma)v_i^2-\gamma v_i|^2\\
&=\sum_{i \in \mathbb{Z}}[(1+\gamma)(u_i+v_i)(u_i-v_i)-(u_i^2+u_i v_i+v_i^2)(u_i-v_i)-\gamma(u_i-v_i)]^2\\
&\leq\sum_{i \in \mathbb{Z}}3[(1+\gamma)^2(u_i+v_i)^2+(u_i^2+u_i v_i+v_i^2)^2+\gamma^2]|u_i-v_i|^2\\
&\leq[12r^2(1+\gamma)^2+27r^4+3\gamma^2]\|u-v\|^2,
\end{aligned}
\end{equation*}
and thus
\begin{equation}\label{2.6}
\begin{aligned}
\|u(1-u)(u-\gamma)-v(1-v)(v-\gamma)\|\leq \sqrt{12r^2(1+\gamma)^2+27r^4+3\gamma^2}\|u-v\|.
\end{aligned}
\end{equation}
Together with \eqref{2.5} and \eqref{2.6}, this implies that
\begin{equation*}
\begin{aligned}
\|Fu-Fv\|&=\|\nu\Lambda u - \nu\Lambda v - \alpha \left(uD^{-}u - vD^{-}v\right) +\beta \left[ u(1-u)(u-\gamma) - v(1-v)(v-\gamma) \right]-\lambda \left( u-v\right)\|\\
& \leq \nu\|\Lambda \left( u-v\right)\|+\alpha \|uD^{-}u - vD^{-}v\|+\beta \|u(1-u)(u-\gamma) - v(1-v)(v-\gamma)\|+\lambda \|u-v\|\\
& \leq \left( 4\nu + 2\sqrt{5}r\alpha + \sqrt{12r^2(1+\gamma)^2+27r^4+3\gamma^2}\beta + \lambda \right)\|u-v\|\\
& = L_r\|u-v\|.
\end{aligned}
\end{equation*}
The proof of the lemma is complete.
\end{proof}

\subsection{Existence of a unique solution to the IES of the Burgers-Huxley equation}
Define the constant $\lambda^{*}$ as
\begin{equation}\label{2.1-12}
\lambda^{*} = 4\nu+ \frac{\left(2\alpha+\beta+\beta\gamma\right)^{2}}{4\beta} - \beta\gamma,
\end{equation}  
and assume that
\begin{equation}\label{2.1-0}
\lambda > \lambda^{*} .
\end{equation}         
We choose the special radius and time-size by
\begin{equation}\label{2.1-11}
r^{*} = 1+\frac{\|f\|}{\lambda-\lambda^{*}},~~~~~\epsilon^{*} = \min \left( \frac{1}{M_{r^{*}+1}}, \frac{1}{1+L_{r^{*}+1}}\right),
\end{equation} 
here $M_{r}$ and $L_{r}$ are given by \eqref{2.3} and \eqref{2.4}.

\begin{theorem}\label{theorem2.1}    
For any $\epsilon \in ( 0 ,\epsilon^{*}]$ and $u_{0}\in \textit{B}_{r^{*}}$, the IES \eqref{2.2} for the Burgers-Huxley equation has a unique solution $u_{n}^{\epsilon}(u_0)\in \textit{B}_{r^{*}}$.
\end{theorem}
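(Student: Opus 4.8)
The plan is to argue by induction on $n$, at each step solving the implicit relation \eqref{2.2} by the contraction mapping principle on a slightly enlarged ball and then sharpening the bound back down to $B_{r^*}$ by an energy estimate. Fix $\epsilon\in(0,\epsilon^*]$, assume inductively that $u_{n-1}^\epsilon\in B_{r^*}$ (the base case $u_0\in B_{r^*}$ being the hypothesis), and read \eqref{2.2} as the fixed-point equation $u=Tu$ with $Tu:=u_{n-1}^\epsilon+\epsilon Fu$ on the closed ball $B_{r^*+1}$, a complete metric space for the $\ell^2$ distance.

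First I would verify that $T$ is a self-map and a contraction there, using Lemma \ref{lemma2.1}. For $u\in B_{r^*+1}$ the bound \eqref{2.3} gives $\|Tu\|\le\|u_{n-1}^\epsilon\|+\epsilon\|Fu\|\le r^*+\epsilon M_{r^*+1}\le r^*+1$, the last step using $\epsilon\le\epsilon^*\le 1/M_{r^*+1}$ from \eqref{2.1-11}; hence $T(B_{r^*+1})\subset B_{r^*+1}$. For contractivity, \eqref{2.4} gives $\|Tu-Tv\|=\epsilon\|Fu-Fv\|\le\epsilon L_{r^*+1}\|u-v\|$ with factor $\epsilon L_{r^*+1}\le L_{r^*+1}/(1+L_{r^*+1})<1$. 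The Banach fixed-point theorem then yields a unique $u_n^\epsilon\in B_{r^*+1}$ solving \eqref{2.2}.

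The crux is to improve $u_n^\epsilon\in B_{r^*+1}$ to $u_n^\epsilon\in B_{r^*}$, where the precise value of $\lambda^*$ in \eqref{2.1-12} must be used. Taking the inner product of \eqref{2.2} with $u_n^\epsilon$ and applying the identity $(u_{n-1}^\epsilon,u_n^\epsilon)=\tfrac12\|u_{n-1}^\epsilon\|^2+\tfrac12\|u_n^\epsilon\|^2-\tfrac12\|u_n^\epsilon-u_{n-1}^\epsilon\|^2$ gives $\|u_n^\epsilon\|^2-\|u_{n-1}^\epsilon\|^2\le 2\epsilon(Fu_n^\epsilon,u_n^\epsilon)$. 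The heart of the matter is the dissipativity estimate $(Fu,u)\le(\lambda^*-\lambda)\|u\|^2+\|f\|\,\|u\|$: I would bound the diffusion term by $\nu(\Lambda u,u)\le 4\nu\|u\|^2$ via $\|\Lambda\|\le4$, expand the reaction as $\beta(u(1-u)(u-\gamma),u)=-\beta\sum_i u_i^4+\beta(1+\gamma)\sum_i u_i^3-\beta\gamma\|u\|^2$, and control the convection term $-\alpha(uD^-u,u)$ together with the cubic reaction part so that their total cubic contribution is at most $(2\alpha+\beta+\beta\gamma)\sum_i|u_i|^3$ (using $\sum_i u_i^2|u_{i-1}|\le\sum_i|u_i|^3$ by Young's inequality). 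The quartic term then absorbs this via the pointwise inequality $(2\alpha+\beta+\beta\gamma)t^3-\beta t^4\le\frac{(2\alpha+\beta+\beta\gamma)^2}{4\beta}t^2$, which is exactly the square completion defining $\lambda^*$.

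Writing $\mu:=\lambda-\lambda^*>0$ by \eqref{2.1-0}, the two estimates combine to $(1+2\epsilon\mu)\|u_n^\epsilon\|^2\le\|u_{n-1}^\epsilon\|^2+2\epsilon\|f\|\,\|u_n^\epsilon\|\le(r^*)^2+2\epsilon\|f\|\,\|u_n^\epsilon\|$. Since $r^*=1+\|f\|/\mu$ from \eqref{2.1-11} yields $\mu r^*-\|f\|=\mu>0$, the value $a=r^*$ makes the upward parabola $P(a)=(1+2\epsilon\mu)a^2-2\epsilon\|f\|a-(r^*)^2$ equal to $2\epsilon r^*\mu>0$, and since its vertex lies below $r^*$, $P$ stays positive for all $a\ge r^*$; hence $\|u_n^\epsilon\|\le r^*$, i.e. $u_n^\epsilon\in B_{r^*}$, closing the induction (uniqueness in $B_{r^*+1}$ being inherited from the contraction). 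I expect the main obstacle to be this energy estimate rather than the fixed-point step: one must combine the diffusion bound, the Young-type control of the discrete convection nonlinearity, and the square completion so that the resulting constant is exactly $\lambda^*$, for only with this sharp constant does the prescribed radius $r^*$ reproduce itself under one step of the scheme.
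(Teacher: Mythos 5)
Your proposal is correct and follows essentially the same route as the paper: a Banach fixed-point argument for $\Phi^{\epsilon}_{u_0}y = u_{n-1}^{\epsilon}+\epsilon Fy$ on $B_{r^*+1}$ using the constants in \eqref{2.1-11}, combined with the same dissipativity estimate (diffusion bound $4\nu\|u\|^2$, cubic terms controlled by $(2\alpha+\beta+\beta\gamma)\|u\|_3^3$, absorbed into the quartic by the square completion that defines $\lambda^*$), and induction on $n$. The only difference is cosmetic: in the invariance step the paper splits $(u_{n-1}^{\epsilon},u_n^{\epsilon})$ and $(f,u_n^{\epsilon})$ by Young's inequality to get the linear recursion \eqref{2.1-6} for $\|u_n^{\epsilon}\|^2$, whereas you keep the term $\|f\|\,\|u_n^{\epsilon}\|$ and conclude via the sign of a quadratic polynomial at $r^*$; both yield the same self-reproducing radius.
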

\begin{proof}
Firstly, we prove positive invariance, i.e., the solution $u^{\epsilon}_{n}$ of the IES \eqref{2.2} satisfies $u_{n}^{\epsilon}\in \textit{B}_{r^{*}}$ if $u_{n-1}^{\epsilon}\in \textit{B}_{r^{*}}$. We take the inner product of \eqref{2.2} with $u^{\epsilon}_{n}$ to obtain 
\begin{equation}\label{2.1-1}
\|u^{\epsilon}_{n}\|^{2}=\left( u^{\epsilon}_{n-1}, u^{\epsilon}_{n}\right)+\epsilon\nu\left(\Lambda u^{\epsilon}_{n}, u^{\epsilon}_{n}\right) - \epsilon\alpha \left( u^{\epsilon}_{n} D^{-} u^{\epsilon}_{n}, u^{\epsilon}_{n}\right) + \epsilon\beta \left( u^{\epsilon}_{n}(1-u^{\epsilon}_{n})(u^{\epsilon}_{n}-\gamma), u^{\epsilon}_{n}\right)-\epsilon\lambda\left( u^{\epsilon}_{n}, u^{\epsilon}_{n}\right)+\epsilon\left( f, u^{\epsilon}_{n}\right).
\end{equation}
In the following, we will repeatedly use Young's inequality. The first term on the right-hand side of the equality \eqref{2.1-1} is estimated as below:
\begin{equation}\label{2.1-2}
\left( u^{\epsilon}_{n-1}, u^{\epsilon}_{n}\right)\leq \frac{1}{2}\|u^{\epsilon}_{n-1}\|^2 + \frac{1}{2}\|u^{\epsilon}_{n}\|^2. 
\end{equation} 
The second term is estimated as below:
\begin{equation}\label{2.1-3}
\begin{aligned}
\epsilon\nu\left(\Lambda u^{\epsilon}_{n}, u^{\epsilon}_{n}\right) & = \epsilon\nu \sum_{i\in \mathbb{Z}} \left( - u_{n,i+1}^{\epsilon} + 2 u_{n,i}^{\epsilon}- u_{n,i-1}^{\epsilon}\right)u_{n,i}^{\epsilon}\\
& = \epsilon\nu \sum_{i\in \mathbb{Z}} \left[ - u_{n,i+1}^{\epsilon}u_{n,i}^{\epsilon} + 2 \left(u_{n,i}^{\epsilon}\right)^{2}- u_{n,i-1}^{\epsilon}u_{n,i}^{\epsilon}\right]\\
& \leq \epsilon\nu \sum_{i\in \mathbb{Z}} \left[ \frac{1}{2}\left(u_{n,i+1}^{\epsilon}\right)^{2} + \frac{1}{2}\left(u_{n,i}^{\epsilon}\right)^{2}+ 2 \left(u_{n,i}^{\epsilon}\right)^{2} + \frac{1}{2}\left(u_{n,i-1}^{\epsilon}\right)^{2} + \frac{1}{2}\left(u_{n,i}^{\epsilon}\right)^{2}\right]\\
& = 4\epsilon\nu \|u^{\epsilon}_{n}\|^{2}.
\end{aligned}
\end{equation} 
The third term is estimated as follows:
\begin{equation*}
\begin{aligned}
- \epsilon\alpha \left( u^{\epsilon}_{n} D^{-} u^{\epsilon}_{n}, u^{\epsilon}_{n}\right) & = - \epsilon\alpha  \sum_{i\in \mathbb{Z}} \left( u_{n,i}^{\epsilon}\right)^{2} \left( u_{n,i-1}^{\epsilon} - u_{n,i}^{\epsilon}\right)\\
& \leq \epsilon\alpha  \sum_{i\in \mathbb{Z}} \left( u_{n,i}^{\epsilon}\right)^{3} + \epsilon\alpha \sum_{i\in \mathbb{Z}} \left( \frac{2}{3}|u_{n,i}^{\epsilon}|^3 + \frac{1}{3} |u_{n,i-1}^{\epsilon}|^3\right)\\
& = 2\epsilon\alpha  \|u_{n}^{\epsilon}\|^{3}_3.
\end{aligned}
\end{equation*} 
The fourth term is rewritten as
\begin{equation*}
\begin{aligned}
\epsilon\beta \left( u^{\epsilon}_{n}(1-u^{\epsilon}_{n})(u^{\epsilon}_{n}-\gamma), u^{\epsilon}_{n}\right) & = \epsilon\beta \left( -\left(u^{\epsilon}_{n}\right)^{3}+(1+\gamma) \left( u^{\epsilon}_{n}\right)^{2} - \gamma u^{\epsilon}_{n}, u^{\epsilon}_{n} \right)\\
& = \epsilon\beta \sum_{i\in \mathbb{Z}} \left[ -\left(u^{\epsilon}_{n,i}\right)^{4}+(1+\gamma) \left( u^{\epsilon}_{n,i}\right)^{3} - \gamma \left(u^{\epsilon}_{n,i} \right)^{2} \right]\\
& = -\epsilon\beta \|u^{\epsilon}_{n}\|^{4}_{4}+ \epsilon\beta(1+\gamma)\|u^{\epsilon}_{n}\|^{3}_{3} - \epsilon\beta\gamma \|u^{\epsilon}_{n}\|^{2}.
\end{aligned}
\end{equation*} 
By Young's inequality, the above two inequalities can be estimated as follows:
\begin{equation}\label{2.1-4}
\begin{aligned}
&- \epsilon\alpha \left( u^{\epsilon}_{n} D^{-} u^{\epsilon}_{n}, u^{\epsilon}_{n}\right) + \epsilon\beta \left( u^{\epsilon}_{n}(1-u^{\epsilon}_{n})(u^{\epsilon}_{n}-\gamma), u^{\epsilon}_{n}\right)\\
\leq & -\epsilon\beta \|u^{\epsilon}_{n}\|^{4}_{4}+ \epsilon( 2\alpha + \beta+\beta\gamma)\|u^{\epsilon}_{n}\|^{3}_{3} - \epsilon\beta\gamma \|u^{\epsilon}_{n}\|^{2}\\
\leq & -\epsilon\beta \|u^{\epsilon}_{n}\|^{4}_{4}+ \epsilon(2\alpha+ \beta +\beta\gamma) \sum_{i\in \mathbb{Z}} \left(\frac{\beta}{2\alpha + \beta+\beta\gamma}|u^{\epsilon}_{n,i}|^{4}+ \frac{2\alpha + \beta+\beta\gamma}{4\beta}|u^{\epsilon}_{n,i}|^{2} \right) - \epsilon\beta\gamma \|u^{\epsilon}_{n}\|^{2}\\
\leq & \epsilon \left(\frac{(2\alpha + \beta+\beta\gamma)^2}{4\beta} -\beta\gamma\right) \|u^{\epsilon}_{n}\|^{2}.
\end{aligned}
\end{equation} 
From $\lambda>\lambda^{*}$ in \eqref{2.1-0}, the last term on the right-hand side of the equality \eqref{2.1-1} is estimated as below:
\begin{equation}\label{2.1-5}
\epsilon\left( f, u^{\epsilon}_{n}\right)\leq \frac{\epsilon(\lambda-\lambda^{*})}{2} \|u^{\epsilon}_{n}\|^{2} + \frac{\epsilon}{2(\lambda-\lambda^{*})}\|f\|^{2}.
\end{equation} 
Finally, substituting \eqref{2.1-2}-\eqref{2.1-5} into \eqref{2.1-1}, we get
\begin{equation*}
\|u^{\epsilon}_{n}\|^{2} \leq \frac{1}{2}\|u^{\epsilon}_{n-1}\|^{2} + \frac{1-\epsilon(\lambda-\lambda^{*})}{2}\|u^{\epsilon}_{n}\|^{2} + \frac{\epsilon}{2(\lambda-\lambda^{*})}\|f\|^{2},
\end{equation*} 
which can be reorganized as
\begin{equation}\label{2.1-6}
\|u^{\epsilon}_{n}\|^{2} \leq \frac{1}{1+\epsilon(\lambda-\lambda^{*})}\|u^{\epsilon}_{n-1}\|^{2} + \frac{\epsilon}{\left[1+\epsilon(\lambda-\lambda^{*})\right](\lambda-\lambda^{*})}\|f\|^{2}.
\end{equation}
If $\|u^{\epsilon}_{n-1}\| \leq r^{*}=1+\frac{\|f\|}{\lambda-\lambda^{*}}$, it follows from \eqref{2.1-6} that
\begin{equation}\label{2.1-7}
\begin{aligned}
\|u^{\epsilon}_{n}\|^{2} &\leq \frac{1}{1+\epsilon(\lambda-\lambda^{*})}\left(1+\frac{\|f\|}{\lambda-\lambda^{*}}\right)^{2} + \frac{\epsilon}{\left[1+\epsilon(\lambda-\lambda^{*})\right](\lambda-\lambda^{*})}\|f\|^{2}\\
&\leq 1+2\frac{\|f\|}{\lambda-\lambda^{*}} + \frac{1}{1+\epsilon(\lambda-\lambda^{*})} \left[ \frac{1}{(\lambda-\lambda^{*})^{2}}+\frac{\epsilon}{\lambda-\lambda^{*}}\right]\|f\|^{2}\\
& \leq \left(1+\frac{\|f\|}{\lambda-\lambda^{*}}\right)^{2} = \left(r^{*}\right)^{2},
\end{aligned}
\end{equation}
which indicates $u^{\epsilon}_{n} \in \textit{B}_{r^{*}}$.

Secondly, we prove the existence and uniqueness of the solution. When $n=1$, for any $u_{0}\in \textit{B}_{r^{*}}$ and $\epsilon \in ( 0 ,\epsilon^{*}]$, let
\begin{equation*}
\Phi^{\epsilon}_{u_0} y=u_{0}+\epsilon F y, ~~~\forall y\in \ell^2.
\end{equation*}
If $y\in \textit{B}_{r^{*}+1}$, we deduce from \eqref{2.3} and \eqref{2.1-11} that
\begin{equation*}
\|\Phi^{\epsilon}_{u_0} y\| \leq \|u_{0}\| + \epsilon \|F y\| \leq r^{*} + \epsilon^{*} M_{r^{*}+1} \leq r^{*}+1.
\end{equation*}
Notice that the operator $\Phi^{\epsilon}_{u_0}: \textit{B}_{r^{*}+1}\rightarrow \textit{B}_{r^{*}+1}$ is well defined. From \eqref{2.4} and \eqref{2.1-11}, we get
\begin{equation*}
\|\Phi^{\epsilon}_{u_0} y - \Phi^{\epsilon}_{u_0} z\| = \epsilon \| F y - F z\| \leq \epsilon^{*} L_{r^{*}+1} \|y-z\| \leq  \frac{L_{r^{*}+1}}{1+L_{r^{*}+1}}\|y-z\|,~~~\forall y,z\in \textit{B}_{r^{*}+1}.
\end{equation*}
Since $\frac{L_{r^{*}+1}}{1+L_{r^{*}+1}}<1$, by contraction mapping principle $\Phi^{\epsilon}_{u_0}$ has a unique fixed point $u^{\epsilon}_{1}(u_0)\in  \textit{B}_{r^{*}+1}$. Because $u_0\in  \textit{B}_{r^{*}}$, it can be deduced from \eqref{2.1-7} that $u^{\epsilon}_{1}(u_0)\in  \textit{B}_{r^{*}}$ and it is the unique solution of the IES \eqref{2.2} in the space $\ell^2$.

For all $n\in \mathds{N}$, follow a similar deduction based on the proof process for $n=1$. Assuming the theorem holds for $n=k$, i.e., equation \eqref{2.2} for $n=k$ has a unique solution $u^{\epsilon}_{k}(u_0)\in  \textit{B}_{r^{*}}$. By applying the same method as above, but with $u^{\epsilon}_{k}(u_0)$ instead of $u_0$, we can deduce that equation \eqref{2.2} for $n=k+1$ also has a unique solution $u^{\epsilon}_{k+1}(u_0)\in  \textit{B}_{r^{*}}$. Thus, the proof of the theorem is complete.
\end{proof}

\subsection{Discretization error of the lattice system for the Burgers-Huxley equation}
Using the local Lipschitz continuity and local boundedness of the vector field in Lemma \ref{lemma2.1}, it can be proved that \eqref{2.1} has a local solution 
$u=\left(u_{i}(t)\right)_{i \in \mathbb{Z}}$ for $t\in [0,T_{max})$ in \cite{Klaus-1977}. We will subsequently demonstrate that the local solution extends globally, i.e., $T_{max}=\infty$. 

\begin{lemma}\label{lemma2.3}
For any $u_{0}\in \ell^2$, the lattice system \eqref{2.1} of the Burgers-Huxley equation has a unique solution $u(\cdot,u_{0})\in C\left( [0,\infty),~ \ell^2 \right)$. Furthermore, the ball $\textit{B}_{r^{*}}$ in Theorem \ref{theorem2.1} is positively invariant and absorbing for the lattice system \eqref{2.1}, i.e.
\begin{equation}\label{lemma2.3-1}
u(t,u_{0})\in \textit{B}_{r^{*}},~~~\forall t\geq 0,~~ u_{0}\in \textit{B}_{r^{*}},
\end{equation}
\begin{equation}\label{lemma2.3-2}
\limsup_{t\rightarrow +\infty} \underset{\|u_0\| \leq r}{\sup}\|u(t,u_{0})\|<r^{*},~~~\forall r>0.
\end{equation}
\end{lemma}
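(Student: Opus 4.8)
The plan is to derive a single a priori estimate on $\|u(t)\|^2$ through a differential inequality, and then read off all three conclusions (global existence, positive invariance, and absorption) from it. Lemma \ref{lemma2.1} already supplies local Lipschitz continuity and local boundedness of $F$, so by the cited result \cite{Klaus-1977} there is a unique local solution $u(\cdot,u_0)\in C([0,T_{max}),\ell^2)$. The only thing left for the first assertion is to show $T_{max}=\infty$, which will follow once $\|u(t)\|$ is bounded uniformly on bounded time intervals.

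First I would take the inner product of \eqref{2.1} with $u(t)$ to obtain
\[
\tfrac{1}{2}\tfrac{d}{dt}\|u\|^2 = \nu(\Lambda u,u) - \alpha(uD^-u,u) + \beta(u(1-u)(u-\gamma),u) - \lambda\|u\|^2 + (f,u).
\]
Every term on the right is handled exactly as in the proof of Theorem \ref{theorem2.1}: using $\nu(\Lambda u,u)\le 4\nu\|u\|^2$ (indeed $(\Lambda u,u)=-\|D^{+}u\|^2\le 0$), the same Young-inequality bookkeeping that produced \eqref{2.1-4} gives $-\alpha(uD^-u,u)+\beta(u(1-u)(u-\gamma),u)\le \big(\tfrac{(2\alpha+\beta+\beta\gamma)^2}{4\beta}-\beta\gamma\big)\|u\|^2$, and splitting $(f,u)$ as in \eqref{2.1-5} yields, with $\lambda^{*}$ from \eqref{2.1-12},
\[
\tfrac{d}{dt}\|u\|^2 \le -(\lambda-\lambda^{*})\|u\|^2 + \tfrac{1}{\lambda-\lambda^{*}}\|f\|^2 .
\]
Since $\lambda>\lambda^{*}$ by \eqref{2.1-0}, Gronwall's inequality gives, for $t\in[0,T_{max})$,
\[
\|u(t)\|^2 \le e^{-(\lambda-\lambda^{*})t}\|u_0\|^2 + \tfrac{\|f\|^2}{(\lambda-\lambda^{*})^2}\big(1-e^{-(\lambda-\lambda^{*})t}\big).
\]
This is uniform on every finite interval, so the solution cannot blow up in finite time; invoking the standard blow-up alternative (finite $T_{max}$ would force $\|u(t)\|\to\infty$) yields $T_{max}=\infty$, proving the first claim.

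The remaining two assertions are then immediate from this explicit bound. For positive invariance, if $\|u_0\|\le r^{*}$ the right-hand side is a convex combination of $(r^{*})^2$ and $\tfrac{\|f\|^2}{(\lambda-\lambda^{*})^2}$; since $(r^{*})^2 = 1 + \tfrac{2\|f\|}{\lambda-\lambda^{*}} + \tfrac{\|f\|^2}{(\lambda-\lambda^{*})^2} > \tfrac{\|f\|^2}{(\lambda-\lambda^{*})^2}$, the combination never exceeds $(r^{*})^2$, so $u(t,u_0)\in B_{r^{*}}$ for all $t\ge 0$, which is \eqref{lemma2.3-1}. For absorption, when $\|u_0\|\le r$ the bound reads $\|u(t,u_0)\|^2 \le e^{-(\lambda-\lambda^{*})t}r^2 + \tfrac{\|f\|^2}{(\lambda-\lambda^{*})^2}$; taking the supremum over $\|u_0\|\le r$ and letting $t\to\infty$ gives $\limsup_{t\to\infty}\sup_{\|u_0\|\le r}\|u(t,u_0)\|^2 \le \tfrac{\|f\|^2}{(\lambda-\lambda^{*})^2} < (r^{*})^2$, which is the strict inequality \eqref{lemma2.3-2}.

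Because the energy computation is essentially a continuous-time rerun of Theorem \ref{theorem2.1}, I do not expect a genuine obstacle; the only point requiring care is the continuation step, where the a priori bound must be converted into global existence via the blow-up alternative. The strictness demanded in \eqref{lemma2.3-2} is secured by the extra $+1$ built into $r^{*}$ in \eqref{2.1-11}, which keeps the limiting radius $\tfrac{\|f\|}{\lambda-\lambda^{*}}$ strictly below $r^{*}$.
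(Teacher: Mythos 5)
Your proposal is correct and follows essentially the same route as the paper: the same energy estimate (inner product with $u$, reusing the bounds \eqref{2.1-3}--\eqref{2.1-5} from Theorem \ref{theorem2.1}), the same Gronwall bound \eqref{2.3-3}, and the same reading of that bound for global existence, invariance, and absorption. Your convex-combination phrasing of positive invariance and the limsup form of absorption are just tidier presentations of the paper's explicit computations, not a different argument.
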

\begin{proof}
It is sufficient to provide the bounded estimate of $u(t)$ on the interval $[0,T)$ for each $T>0$. We take the inner product of the lattice system \eqref{2.1} with $u(t)$ to obtain
\begin{equation*}
\frac{1}{2}\frac{d}{dt}\|u(t)\|^{2}=\nu\left(\Lambda u, u\right) - \alpha \left( u D^{-} u, u\right) + \beta \left( u(1-u)(u-\gamma), u\right)-\lambda\left( u, u\right)+\left( f, u\right).
\end{equation*}
The first term on the right-hand side of the above equality is estimated in a manner similar to \eqref{2.1-3}, we get
\begin{equation*}
\nu\left(\Lambda u, u\right) \leq 4\nu \|u\|^{2}.
\end{equation*}
The second and the third terms are estimated by \eqref{2.1-4}, then we have
\begin{equation*}
- \alpha \left( u D^{-} u, u\right) + \beta \left( u(1-u)(u-\gamma), u\right)\leq   \left(\frac{(2\alpha + \beta+\beta\gamma)^2}{4\beta} -\beta\gamma\right) \|u\|^{2}.
\end{equation*}
The last term is estimated by \eqref{2.1-5} as follows:
\begin{equation*}
\left( f, u\right)\leq \frac{(\lambda-\lambda^{*})}{2} \|u\|^{2} + \frac{1}{2(\lambda-\lambda^{*})}\|f\|^{2}.
\end{equation*} 
By using the above inequalities and the symbols in \eqref{2.1-12}, we have
\begin{equation*}
\frac{d}{dt}\|u(t)\|^{2}\leq -(\lambda-\lambda^{*})\|u(t)\|^{2}+\frac{1}{\lambda-\lambda^{*}}\|f\|^{2}. 
\end{equation*} 
By Gronwall lemma, we deduce that
\begin{equation}\label{2.3-3}
\|u(t)\|^{2}\leq e^{-(\lambda-\lambda^{*})t}\|u(0)\|^{2}+\frac{\|f\|^{2}}{(\lambda-\lambda^{*})^{2}}\left(1-e^{-(\lambda-\lambda^{*})t}\right). 
\end{equation} 
According to \eqref{2.3-3}, we get
\begin{equation*}
\max_{0\leq t\leq T}\|u(t)\|^{2}\leq \|u(0)\|^{2}+\frac{\|f\|^{2}}{(\lambda-\lambda^{*})^{2}}<+\infty,~~\forall T>0.
\end{equation*} 
Therefore, the global solution exists. The uniqueness of the global solution is guaranteed by  \eqref{2.4}.

For any $\|u_{0}\|\leq r^{*}=1+\frac{\|f\|}{\lambda-\lambda^{*}}$, it can be deduced from \eqref{2.3-3} that
\begin{equation*}
\begin{aligned}
\|u(t,u_0)\|^{2} & \leq e^{-(\lambda-\lambda^{*})t}\|u_{0}\|^{2}+\frac{\|f\|^{2}}{(\lambda-\lambda^{*})^{2}}\left(1-e^{-(\lambda-\lambda^{*})t}\right) \\
& \leq e^{-(\lambda-\lambda^{*})t}  \left[ 1 + 2\frac{\|f\|}{\lambda-\lambda^{*}} + \frac{\|f\|^{2}}{(\lambda-\lambda^{*})^2} \right] - e^{-(\lambda-\lambda^{*})t}\frac{\|f\|^{2}}{(\lambda-\lambda^{*})^2} + \frac{\|f\|^{2}}{(\lambda-\lambda^{*})^2}  \\
& = e^{-(\lambda-\lambda^{*})t}  +e^{-(\lambda-\lambda^{*})t}2\frac{\|f\|}{\lambda-\lambda^{*}} +  \frac{\|f\|^{2}}{(\lambda-\lambda^{*})^2} \\
& \leq 1 + 2\frac{\|f\|}{\lambda-\lambda^{*}} + \frac{\|f\|^{2}}{(\lambda-\lambda^{*})^2}  = \left(1+\frac{\|f\|}{\lambda-\lambda^{*}} \right)^2 = \left( r^{*} \right)^2.
\end{aligned}
\end{equation*} 
Therefore, $\textit{B}_{r^{*}}$ is positively invariant.

Furthermore, if $\|u_{0}\|\leq r$ for any $r>0$, it follows from \eqref{2.3-3} that
\begin{equation*}
\begin{aligned}
\|u(t,u_0)\|^{2} & \leq e^{-(\lambda-\lambda^{*})t}r^{2}+\frac{\|f\|^{2}}{(\lambda-\lambda^{*})^{2}}\left(1-e^{-(\lambda-\lambda^{*})t}\right) \\
& \leq \frac{1}{2} + \frac{\|f\|^{2}}{(\lambda-\lambda^{*})^2}  < \left( r^{*} \right)^2,~~~\forall t\geq \frac{2\log r +\log \lambda^{*}}{\lambda-\lambda^{*}}.
\end{aligned}
\end{equation*} 
Thus, $\textit{B}_{r^{*}}$ is absorbing. We complete the proof.
\end{proof}

We firstly consider the existence of a global attractor $\mathcal{A}$. This attractor is compact for the continuous-time solution of Burgers-Huxley equation \eqref{2.1}, that is,
\begin{equation*}
u(t,\mathcal{A})=\mathcal{A},~~~\forall t\geq 0 ~ ~and ~ \lim_{t\rightarrow +\infty} d_{\ell^2}\left(u(t,B_{r}),\mathcal{A}\right)=0,~~~\forall r>0.
\end{equation*}

\begin{theorem}\label{theorem3.2}
	The lattice system \eqref{2.1} of the Burgers-Huxley equation has a unique global attractor, which can be expressed as:
	\begin{equation*}
	\mathcal{A}=\bigcap_{T>0}\overline{\bigcup_{t\geq T}u\left(t,\textit{B}_{r^{*}}\right)},
	\end{equation*}
	where $r^{*}=1+\frac{\|f\|}{\lambda-\lambda^{*}}$.
\end{theorem}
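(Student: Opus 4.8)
The plan is to verify the three standard hypotheses of the abstract theory of global attractors for the semigroup generated by \eqref{2.1} on $\ell^2$, and then to identify the attractor with the $\omega$-limit set of the absorbing ball. By Lemma \ref{lemma2.3}, for every $u_0\in\ell^2$ the map $S(t)u_0:=u(t,u_0)$ is well defined for all $t\ge0$, and the continuous dependence on initial data furnished by the Lipschitz estimate \eqref{2.4} shows that $\{S(t)\}_{t\ge0}$ is a continuous semigroup on $\ell^2$. Lemma \ref{lemma2.3} also provides the bounded absorbing set: \eqref{lemma2.3-2} says that $B_{r^*}$ absorbs every bounded set, while \eqref{lemma2.3-1} gives its positive invariance. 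Since $\ell^2$ is infinite dimensional, a bounded absorbing set is not precompact, so the only remaining ingredient---and the heart of the proof---is \emph{asymptotic compactness}: every sequence $S(t_n)u_{0,n}$ with $t_n\to\infty$ and $\{u_{0,n}\}$ bounded must possess a convergent subsequence in $\ell^2$.

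To establish asymptotic compactness for a lattice system the standard device is the uniform tail estimate. Fix a smooth cut-off $\theta:\RR\to[0,1]$ with $\theta(s)=0$ for $s\le1$, $\theta(s)=1$ for $s\ge2$ and $|\theta'|\le C_0$, and for an integer $k\ge1$ set $\theta_{k,i}:=\theta(|i|/k)$. I would take the inner product of \eqref{2.1} with the weighted sequence $(\theta_{k,i}u_i)_{i\in\ZZ}$ to obtain
\begin{equation*}
\frac12\frac{d}{dt}\sum_{i\in\ZZ}\theta_{k,i}u_i^2
=\nu\sum_{i}\theta_{k,i}(\Lambda u)_iu_i
-\alpha\sum_{i}\theta_{k,i}u_i^2(u_{i-1}-u_i)
+\beta\sum_{i}\theta_{k,i}u_i^2(1-u_i)(u_i-\gamma)
-\lambda\sum_{i}\theta_{k,i}u_i^2
+\sum_{i}\theta_{k,i}f_iu_i .
\end{equation*}
The aim is to reproduce, term by term, the dissipative estimate already carried out in Lemma \ref{lemma2.3}, the decisive point being that the weight $\theta_{k,i}$ only creates extra \emph{commutator} contributions of size $O(1/k)$, because $|\theta_{k,i}-\theta_{k,i\pm1}|\le C_0/k$. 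Concretely, the diffusion term is handled as in \eqref{2.1-3} by Young's inequality on each cross term, giving (after reindexing the shifted sums) $\le 4\nu\sum_i\theta_{k,i}u_i^2+C\nu k^{-1}\|u\|^2$; the advection and reaction terms are combined exactly as in \eqref{2.1-4}, after using the shift-invariance of the $\ell^3$-sums up to the $O(1/k)$ weight commutator, to yield $\le\bigl(\tfrac{(2\alpha+\beta+\beta\gamma)^2}{4\beta}-\beta\gamma\bigr)\sum_i\theta_{k,i}u_i^2+Ck^{-1}$; and the forcing term is split by Young's inequality as in \eqref{2.1-5} into $\tfrac{\lambda-\lambda^*}{2}\sum_i\theta_{k,i}u_i^2+\tfrac1{2(\lambda-\lambda^*)}\sum_{|i|\ge k}f_i^2$. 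Writing $\lambda^*$ as in \eqref{2.1-12} and absorbing the $O(1/k)$ terms (which are uniformly bounded on $B_{r^*}$), these estimates collapse to
\begin{equation*}
\frac{d}{dt}\sum_{i}\theta_{k,i}u_i^2
\le -(\lambda-\lambda^*)\sum_{i}\theta_{k,i}u_i^2
+\frac{C}{k}+\frac{1}{\lambda-\lambda^*}\sum_{|i|\ge k}|f_i|^2 .
\end{equation*}
Gronwall's lemma together with $u_0\in B_{r^*}$ then gives, for every $\eta>0$, an integer $k(\eta)$ and a time $T(\eta)$ such that $\sum_{|i|\ge 2k(\eta)}|u_i(t,u_0)|^2\le\eta$ for all $t\ge T(\eta)$ and all $u_0\in B_{r^*}$; here we use that $f\in\ell^2$ forces $\sum_{|i|\ge k}|f_i|^2\to0$.

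With the uniform tail estimate in hand, asymptotic compactness follows from the classical argument for lattices: a bounded sequence in $\ell^2$ with uniformly small tails is precompact, since on the finite block $\{|i|<2k\}$ one extracts, by Bolzano--Weierstrass, a subsequence converging in the finite-dimensional head, while the controlled tails make the full $\ell^2$ sequence Cauchy up to $\eta$. Applying this to $S(t_n)u_{0,n}=S(t_n-T_0)\bigl(S(T_0)u_{0,n}\bigr)$, where $S(T_0)u_{0,n}\in B_{r^*}$ eventually by the absorbing property, shows that $\{S(t)\}_{t\ge0}$ is asymptotically compact. By the standard existence theorem it therefore possesses a unique global attractor given by the $\omega$-limit set of the absorbing ball,
\begin{equation*}
\mathcal{A}=\omega(B_{r^*})=\bigcap_{T>0}\overline{\bigcup_{t\ge T}u(t,B_{r^*})},
\end{equation*}
which is exactly the asserted formula; compactness, invariance and attraction are the conclusions of that theorem, and uniqueness is automatic from the characterization of $\mathcal{A}$ as the minimal compact attracting set. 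The principal obstacle throughout is the bookkeeping in the tail estimate: controlling the advection term $-\alpha u D^-u$ and the discrete Laplacian against the cut-off, i.e.\ verifying that every weight-commutator is genuinely $O(1/k)$ and that the $\beta$-cubic and the $\alpha$-advection still combine into the same $\lambda^*$-threshold as in \eqref{2.1-4}, so that the net coefficient $-(\lambda-\lambda^*)$ remains strictly negative.
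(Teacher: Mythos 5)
Your proposal is correct and follows essentially the same route as the paper's proof: both rely on the absorbing ball $B_{r^{*}}$ from Lemma \ref{lemma2.3} together with a cut-off-function tail estimate, using the identical term-by-term bounds (the $4\nu$ bound for the discrete Laplacian, the advection and cubic reaction terms combined into the $\lambda^{*}$ threshold of \eqref{2.1-4}, Young's inequality on the forcing) and Gronwall's lemma to obtain uniformly small tails on $B_{r^{*}}$. The only difference is presentational: you make explicit the passage from the tail estimate to asymptotic compactness and the abstract existence theorem, which the paper leaves implicit.
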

\begin{proof}
	According to Lemma \ref{lemma2.3}, the set $\textit{B}_{r^{*}}$ is absorbing for the continuous-time equation. Here, we outline the proof for the asymptotic tails property. We introduce the cut-off function $\xi:\mathbb R^{+}\rightarrow [0,1]$, which is continuously differential and satisfies the following properties:
	\begin{equation*}\xi(s)=
	\left\{\begin{array}{l}
	0,~~0\leq s\leq1,\\
	1,~~s\geq 2.
	\end{array}\right.
	\end{equation*}
	Then there exists a constant $c_{1}>0$ such that for any $k\in \mathbb{N}$ and $i\in \mathbb{Z}$
	\begin{equation}\label{theorem3.1-1}
	|\xi_{k,i+1}-\xi_{k,i}|\leq\frac{c_{1}}{k}, ~~~\text{where}~~\xi_{k,i}=\xi(\frac{|i|}{k}),~~~
	(\xi_{k,i})_{i\in \mathbb{Z}}=:\xi_{k}.
	\end{equation}
	By \eqref{x2.1}, the above inequality can be written as
	\begin{equation}\label{theore3.1-1}
	\|D^{+}\xi_k\|\leq\frac{c_{1}}{k},~~~\forall k\in \mathds{N}.
	\end{equation}
	Let $u=u(t,y)$  be the continuous-time solution of the lattice system \eqref{2.1} for $y\in \textit{B}_{r}$, where $r>0$ is any chosen radius. By taking the inner product of equation \eqref{2.1} with $\xi_{k} u$, we have
	\begin{equation}\label{theorem3.2-1}
	\frac{1}{2}\frac{d}{dt} \sum_{i\in \mathbb{Z}} \xi_{k,i} u_{i}^{2}(t) = \left(\nu\Lambda u-\alpha uD^{-}u+\beta u(1-u)(u-\gamma)-\lambda u + f, \xi_{k}u\right).
	\end{equation}
	By using Young's inequality, the first term can be infer that
	\begin{equation}\label{theorem3.2-3}
	\begin{aligned}
	\nu\left(\Lambda u, \xi_{k}u\right) & = \nu \sum_{i\in \mathbb{Z}} \xi_{k,i}u_{i}\left( - u_{i+1} + 2 u_{i}- u_{i-1}\right)\\
	& = \nu \sum_{i\in \mathbb{Z}} \xi_{k,i}\left( - u_{i}u_{i+1} + 2u_{i}^{2}- u_{i-1}u_{i}\right)\\
	& \leq \nu \sum_{i\in \mathbb{Z}} \xi_{k,i} \left( \frac{1}{2}u_{i}^{2} + \frac{1}{2}u_{i+1}^{2}+ 2 u_{i}^{2} + \frac{1}{2}u_{i-1}^{2} + \frac{1}{2}u_{i}^{2}\right)\\
	& = 3\nu \sum_{i\in \mathbb{Z}} \xi_{k,i}u_{i}^{2} + \frac{1}{2}\nu \sum_{i\in \mathbb{Z}} \xi_{k,i}u_{i-1}^{2} +  \frac{1}{2}\nu \sum_{i\in \mathbb{Z}} \xi_{k,i}u_{i+1}^{2},
	\end{aligned}
	\end{equation} 
	where
	\begin{equation}
	\begin{aligned}
	\frac{1}{2}\nu \sum_{i\in \mathbb{Z}} \xi_{k,i}u_{i-1}^{2} & = \frac{1}{2}\nu \left[\sum_{i\in \mathbb{Z}} \left(\xi_{k,i}-\xi_{k,i-1}\right)u_{i-1}^{2} + \sum_{i\in \mathbb{Z}} \xi_{k,i-1}u_{i-1}^{2}\right]\\
	& = \frac{1}{2}\nu \left[\sum_{i\in \mathbb{Z}} \left(D^{+}\xi_{k}\right)_{i-1}u_{i-1}^{2} + \sum_{i\in \mathbb{Z}} \xi_{k,i-1}u_{i-1}^{2}\right]\\
	& = \frac{1}{2}\nu \left[\sum_{i\in \mathbb{Z}} \left(D^{+}\xi_{k}\right)_{i}u_{i}^{2} + \sum_{i\in \mathbb{Z}} \xi_{k,i}u_{i}^{2}\right]\\
	& \leq \frac{1}{2}\nu \frac{c_{1}}{k}\left(r^{*}\right)^{2} + \frac{1}{2}\nu\sum_{i\in \mathbb{Z}} \xi_{k,i}u_{i}^{2},
	\end{aligned}
	\end{equation} 
	and
	\begin{equation}\label{theorem3.2-4}
	\begin{aligned}
	\frac{1}{2}\nu \sum_{i\in \mathbb{Z}} \xi_{k,i}u_{i+1}^{2} & = \frac{1}{2}\nu \left[\sum_{i\in \mathbb{Z}} \left(\xi_{k,i}-\xi_{k,i+1}\right)u_{i+1}^{2} + \sum_{i\in \mathbb{Z}} \xi_{k,i+1}u_{i+1}^{2}\right]\\
	& = \frac{1}{2}\nu \left[-\sum_{i\in \mathbb{Z}} \left(D^{+}\xi_{k}\right)_{i}u_{i+1}^{2} + \sum_{i\in \mathbb{Z}} \xi_{k,i}u_{i}^{2}\right]\\
	& \leq \frac{1}{2}\nu \frac{c_{1}}{k}\left(r^{*}\right)^{2} + \frac{1}{2}\nu\sum_{i\in \mathbb{Z}} \xi_{k,i}u_{i}^{2}.
	\end{aligned}
	\end{equation} 
	As above, we can derive the first inner product on the right-hand side as follows:
	\begin{equation*}
	\begin{aligned}
	\left(\nu\Lambda u, \xi_{k}u\right) & \leq 3\nu \sum_{i\in \mathbb{Z}} \xi_{k,i}u_{i}^{2}(t)+\frac{1}{2}\nu \frac{c_{1}}{k}\|u\|^{2} + \frac{1}{2}\nu\sum_{i\in \mathbb{Z}} \xi_{k,i}u_{i}^{2}(t)+\frac{1}{2}\nu \frac{c_{1}}{k}\|u\|^{2} + \frac{1}{2}\nu\sum_{i\in \mathbb{Z}} \xi_{k,i}u_{i}^{2}(t)\\
	& = 4\nu \sum_{i\in \mathbb{Z}} \xi_{k,i}u_{i}^{2}(t) + \nu\frac{c_{1}}{k}\|u\|^{2}.
	\end{aligned}
	\end{equation*}
	The second inner product is estimated as below:
	\begin{equation*}
	\begin{aligned}
	-\alpha\left( uD^{-}u, \xi_{k}u\right) &= \alpha\sum_{i\in \mathbb{Z}} \xi_{k,i} u_{i}^{3}(t) - \alpha\sum_{i\in \mathbb{Z}} \xi_{k,i} u_{i}^{2}(t)u_{i-1}(t)\\ 
	&\leq \alpha\sum_{i\in \mathbb{Z}} \xi_{k,i} |u_{i}(t)|^{3} + \alpha\sum_{i\in \mathbb{Z}} \xi_{k,i}\left[ \frac{2}{3}|u_{i}(t)|^{3}+\frac{1}{3}|u_{i-1}(t)|^{3}\right]\\
	&= \frac{5}{3}\alpha\sum_{i\in \mathbb{Z}} \xi_{k,i} |u_{i}(t)|^{3} + \frac{1}{3}\alpha\sum_{i\in \mathbb{Z}}\left[ (\xi_{k,i}-\xi_{k,i-1}) |u_{i-1}(t)|^{3}+\xi_{k,i-1}|u_{i-1}(t)|^{3}\right]\\
	&= 2\alpha\sum_{i\in \mathbb{Z}} \xi_{k,i} |u_{i}(t)|^{3} + \frac{1}{3}\alpha\sum_{i\in \mathbb{Z}}\left(D^{+}\xi_{k}\right)_{i-1} |u_{i-1}(t)|^{3}\\
	&\leq  2\alpha\sum_{i\in \mathbb{Z}} \xi_{k,i} |u_{i}(t)|^{3} + \frac{c_1\alpha}{3k}\|u(t)\|_3^{3}.
	\end{aligned}
	\end{equation*}
	Writing the third term in component form, we have
	\begin{equation*}
	\begin{aligned}
	\beta \left(u(1-u)(u-\gamma),\xi_{k}u \right) &= -\beta \sum_{i\in \mathbb{Z}} \xi_{k,i} u_{i}^{4}(t) + \beta (1+\gamma)\sum_{i\in \mathbb{Z}} \xi_{k,i} u_{i}^{3}(t) -\beta \gamma \sum_{i\in \mathbb{Z}} \xi_{k,i} u_{i}^{2}(t).
	\end{aligned}
	\end{equation*}
	Combining the above two items, we have
	\begin{equation*}
	\begin{aligned}
	&-\alpha\left( uD^{-}u + \beta u(1-u)(u-\gamma), \xi_{k}u\right) \\
	\leq & (2\alpha + \beta + \beta\gamma)\sum_{i\in \mathbb{Z}} \xi_{k,i} |u_{i}(t)|^{3} + \frac{c_1\alpha}{3k}\|u(t)\|_3^{3} -\beta \sum_{i\in \mathbb{Z}} \xi_{k,i} u_{i}^{4}(t) -\beta \gamma \sum_{i\in \mathbb{Z}} \xi_{k,i} u_{i}^{2}(t)\\
	\leq & \beta \sum_{i\in \mathbb{Z}} \xi_{k,i} u_{i}^{4}(t) + \frac{(2\alpha + \beta + \beta\gamma)^2}{4\beta} \sum_{i\in \mathbb{Z}} \xi_{k,i} u_{i}^{2}(t) + \frac{c_1\alpha}{3k}\|u(t)\|_3^{3} -\beta \sum_{i\in \mathbb{Z}} \xi_{k,i} u_{i}^{4}(t)-\beta \gamma \sum_{i\in \mathbb{Z}} \xi_{k,i} u_{i}^{2}(t) \\
	= & \frac{(2\alpha + \beta + \beta\gamma)^2}{4\beta} \sum_{i\in \mathbb{Z}} \xi_{k,i} u_{i}^{2}(t) + \frac{c_1\alpha}{3k}\|u(t)\|_3^{3} -\beta \gamma \sum_{i\in \mathbb{Z}} \xi_{k,i} u_{i}^{2}(t).
	\end{aligned}
	\end{equation*}
	The last inner product is estimated as follows:
	\begin{equation*}
	\begin{aligned}
	\left(f, \xi_{k}u\right) & = \sum_{i\in \mathbb{Z}} \xi_{k,i} u_{i}(t)f_{i}
	\leq  \frac{\left(\lambda-\lambda^{*}\right)}{2}\sum_{i\in \mathbb{Z}} \xi_{k,i} u_{i}^{2}(t)+\frac{1}{2\left(\lambda-\lambda^{*}\right)}\sum_{i\in \mathbb{Z}} \xi_{k,i} f_{i}^{2}.
	\end{aligned}
	\end{equation*} 
	Substituting the above estimates into formula \eqref{theorem3.2-1}, we have
	\begin{equation*}
	\frac{d}{dt} \sum_{i\in \mathbb{Z}} \xi_{k,i} u_{i}^{2}(t) + (\lambda-\lambda^{*})\sum_{i\in \mathbb{Z}} \xi_{k,i} u_{i}^{2}(t) \leq \frac{2c_{1}\nu}{k}\|u\|^{2}+ \frac{2c_{1}\alpha}{3k}\|u\|_3^{3}+ \frac{1}{\lambda-\lambda^{*}}\sum_{|i|\geq k}f^{2}_{i}.
	\end{equation*}  
	By Gronwall's inequality, for any $u(0)\in \textit{B}_{r}$ we conclude that
	\begin{equation}\label{theorem3.2-2}
	\sum_{i\in \mathbb{Z}} \xi_{k,i} u_{i}^{2}(t)\leq e^{-(\lambda-\lambda^{*})t}r^{2} + \frac{1}{\lambda-\lambda^{*}}\left[\frac{2c_{1}\nu}{k}\|u\|^{2}+ \frac{2c_{1}\alpha}{3k}\|u\|_3^{3}+ \frac{1}{\lambda-\lambda^{*}}\sum_{|i|\geq k}f^{2}_{i}\right].
	\end{equation}  
	Because $\textit{B}_{r^{*}}$ is attracting, $\|u\|^{2}$ and $\|u\|_3^{3}$ are bounded as $t\rightarrow +\infty$. Therefore, from \eqref{theorem3.2-2}, we deduce that
	\begin{equation*}
	\lim_{k,t\rightarrow +\infty}\sup_{y\in \textit{B}_{r}}\sum_{|i|\geq 2k} u_{i}^{2}(t,y) \leq \lim_{k,t\rightarrow +\infty}\sup_{y\in \textit{B}_{r}} \sum_{i\in \mathbb{Z}} \xi_{k,i} u_{i}^{2}(t) = 0.
	\end{equation*}  
	The asymptotic tails property of the continuous-time solution is proven.  
\end{proof}

In order to consider discretization error of the lattice system, we introduce a one-step second-order Taylor expansion for the solution of equation \eqref{2.1}.
\begin{lemma}\label{lemma2.4}
For $n\in N$, and for any times $t_{n},~t_{n-1}\geq 0$ satisfied $t_{n}-t_{n-1}=\epsilon>0$, there exists an operator $\Theta_{n,\epsilon}:\textit{B}_{r^{*}}\rightarrow \ell^2$ such that
\begin{equation}\label{2.4-1}
u(t_{n-1},y)=u(t_{n},y)-\epsilon Fu(t_{n},y)+\epsilon^{2}\Theta_{n,\epsilon}y,~~~\forall y\in \textit{B}_{r^{*}}, 
\end{equation}
where $F$ represents the vector field in \eqref{2.1}. Furthermore, the operator $\Theta_{n,\epsilon}$ is uniformly bounded on $\textit{B}_{r^{*}}$, i.e.,
\begin{equation}\label{2.4-2}
\|\Theta_{n,\epsilon}y\|\leq \frac{1}{2}L_{r^{*}}M_{r^{*}}, 
\end{equation}
where the constants $L_{r^{*}}$ and $M_{r^{*}}$  are defined in Lemma \ref{lemma2.1} and independent of $n,\epsilon,u_{0}$.
\end{lemma}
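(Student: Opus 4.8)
The plan is to avoid differentiating $F$ (which, by Lemma \ref{lemma2.1}, is only locally Lipschitz and not assumed Fr\'echet differentiable) and instead obtain the expansion \eqref{2.4-1} directly from the integral form of the continuous-time solution. Since $u(\cdot,y)$ solves \eqref{2.1}, the fundamental theorem of calculus gives
\begin{equation*}
u(t_{n-1},y)=u(t_{n},y)-\int_{t_{n-1}}^{t_{n}}Fu(s,y)\,ds .
\end{equation*}
I would then add and subtract $Fu(t_{n},y)$ in the integrand to isolate the first-order term, obtaining
\begin{equation*}
u(t_{n-1},y)=u(t_{n},y)-\epsilon Fu(t_{n},y)-\int_{t_{n-1}}^{t_{n}}\left[Fu(s,y)-Fu(t_{n},y)\right]ds ,
\end{equation*}
which already has the form \eqref{2.4-1} once I define the remainder operator by
\begin{equation*}
\Theta_{n,\epsilon}y:=-\frac{1}{\epsilon^{2}}\int_{t_{n-1}}^{t_{n}}\left[Fu(s,y)-Fu(t_{n},y)\right]ds .
\end{equation*}
This realizes the desired ``second-order'' expansion purely through a Lipschitz remainder, so no second derivative of the trajectory is needed.

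The decisive ingredient for the uniform bound \eqref{2.4-2} is the positive invariance of $\textit{B}_{r^{*}}$ from Lemma \ref{lemma2.3}: for $y\in \textit{B}_{r^{*}}$ the entire trajectory $\{u(s,y):s\ge 0\}$ stays in $\textit{B}_{r^{*}}$, so both the boundedness estimate \eqref{2.3} and the Lipschitz estimate \eqref{2.4} apply along the whole interval with the \emph{fixed} constants $M_{r^{*}}$ and $L_{r^{*}}$. First I would use \eqref{2.3} to control the short-time displacement: for $s\in[t_{n-1},t_{n}]$,
\begin{equation*}
\|u(s,y)-u(t_{n},y)\|=\left\|\int_{s}^{t_{n}}Fu(\tau,y)\,d\tau\right\|\leq M_{r^{*}}(t_{n}-s).
\end{equation*}

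Then, applying \eqref{2.4} inside the remainder integral and substituting the displacement bound, I would estimate
\begin{equation*}
\left\|\int_{t_{n-1}}^{t_{n}}\left[Fu(s,y)-Fu(t_{n},y)\right]ds\right\|\leq L_{r^{*}}\int_{t_{n-1}}^{t_{n}}\|u(s,y)-u(t_{n},y)\|\,ds\leq L_{r^{*}}M_{r^{*}}\int_{t_{n-1}}^{t_{n}}(t_{n}-s)\,ds=\tfrac{1}{2}L_{r^{*}}M_{r^{*}}\epsilon^{2},
\end{equation*}
which immediately yields $\|\Theta_{n,\epsilon}y\|\leq\tfrac{1}{2}L_{r^{*}}M_{r^{*}}$ and, since the constants $M_{r^{*}},L_{r^{*}}$ depend only on $r^{*}$, confirms independence of $n,\epsilon,u_{0}$. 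The only genuine obstacle is ensuring that the trajectory never leaves $\textit{B}_{r^{*}}$ over $[t_{n-1},t_{n}]$, for otherwise the uniform constants could not be invoked; this is exactly what the invariance statement \eqref{lemma2.3-1} of Lemma \ref{lemma2.3} guarantees, after which the remaining computation is elementary.
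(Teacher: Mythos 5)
Your proposal is correct and complete: the integral-form decomposition, the invariance of $\textit{B}_{r^{*}}$ from Lemma \ref{lemma2.3}, and the displacement bound $\|u(s,y)-u(t_{n},y)\|\leq M_{r^{*}}(t_{n}-s)$ combine exactly to give $\|\Theta_{n,\epsilon}y\|\leq\tfrac{1}{2}L_{r^{*}}M_{r^{*}}$, with the factor $\tfrac{1}{2}$ coming from $\int_{t_{n-1}}^{t_{n}}(t_{n}-s)\,ds=\tfrac{\epsilon^{2}}{2}$. The paper itself does not prove this lemma but defers to the reference \cite{Liu-2024}, and your argument is precisely the standard first-order Taylor expansion with Lipschitz integral remainder used there, so the approach is essentially the same; your write-up has the added merit of being self-contained and of making explicit that only local Lipschitz continuity of $F$ (not differentiability) is needed.
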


In the following theorem, we give the one-step error between the continuous-time solution and its discrete-time solution, demonstrating that the discrete-time solution converges to the continuous-time solution as $\epsilon\rightarrow 0^{+}$.
\begin{theorem}\label{theorem2.5}
Denoting by $u(t,y)$ and $u^{\epsilon}_{n}(y)$ the solutions to \eqref{2.1} and \eqref{2.2} with the initial condition $y\in \textit{B}_{r^{*}}$, we get
 \begin{equation}\label{2.5-1}
\|u(\epsilon,u^{\epsilon}_{n-1}(y))-u^{\epsilon}_{n}(y)\|\leq L_{r^{*}}M_{r^{*}} L_{r^{*}+1}\epsilon^2, ~~~\forall\epsilon\in(0,\epsilon^{*}],~~n\in\mathds N.
\end{equation}
Furthermore, for every $T > 0$, one has
\begin{equation}\label{2.5-2}
\|u(t_{n},y)-u^{\epsilon}_{n}(y)\|\leq \frac{M_{r^{*}}}{2}e^{L_{r^{*}}T} \epsilon, ~~~\forall t_{n}=\epsilon n\in(0,T].
\end{equation}
\end{theorem}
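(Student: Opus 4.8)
The plan is to prove the two estimates in order, deducing the global bound \eqref{2.5-2} from the one-step bound \eqref{2.5-1} by a semigroup/telescoping argument together with a discrete Gronwall summation.

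For the one-step estimate \eqref{2.5-1}, the idea is to apply the Taylor identity of Lemma \ref{lemma2.4} not to the trajectory starting at $y$, but to the one starting at $u^{\epsilon}_{n-1}(y)$. This is legitimate: by Theorem \ref{theorem2.1} we have $u^{\epsilon}_{n-1}(y)\in B_{r^{*}}$, and by the positive invariance in Lemma \ref{lemma2.3} the point $w:=u(\epsilon,u^{\epsilon}_{n-1}(y))$ again lies in $B_{r^{*}}$. Writing Lemma \ref{lemma2.4} with initial time $0$ and step $\epsilon$ gives $u^{\epsilon}_{n-1}(y)=w-\epsilon F w+\epsilon^{2}\Theta$ with $\|\Theta\|\le\tfrac12 L_{r^{*}}M_{r^{*}}$, whereas the scheme \eqref{2.2} reads $u^{\epsilon}_{n-1}(y)=u^{\epsilon}_{n}(y)-\epsilon F u^{\epsilon}_{n}(y)$. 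Subtracting the two identities eliminates $u^{\epsilon}_{n-1}(y)$ and yields $w-u^{\epsilon}_{n}(y)=\epsilon\bigl(Fw-Fu^{\epsilon}_{n}(y)\bigr)-\epsilon^{2}\Theta$. Taking norms and using the Lipschitz bound \eqref{2.4} (both $w$ and $u^{\epsilon}_{n}(y)$ lie in $B_{r^{*}}\subset B_{r^{*}+1}$) gives $\|w-u^{\epsilon}_{n}(y)\|\le\epsilon L_{r^{*}+1}\|w-u^{\epsilon}_{n}(y)\|+\tfrac12 L_{r^{*}}M_{r^{*}}\epsilon^{2}$. The step-size restriction $\epsilon\le\epsilon^{*}\le(1+L_{r^{*}+1})^{-1}$ forces $1-\epsilon L_{r^{*}+1}\ge(1+L_{r^{*}+1})^{-1}>0$, so the self-referential term is absorbed on the left and $(1-\epsilon L_{r^{*}+1})^{-1}\le 1+L_{r^{*}+1}$ converts the remainder into the claimed $O(\epsilon^{2})$ bound \eqref{2.5-1}.

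For the global estimate \eqref{2.5-2}, set $e_{n}:=\|u(t_{n},y)-u^{\epsilon}_{n}(y)\|$ with $e_{0}=0$. Since \eqref{2.1} is autonomous we may use the semigroup property $u(t_{n},y)=u(\epsilon,u(t_{n-1},y))$; inserting $u(\epsilon,u^{\epsilon}_{n-1}(y))$ and applying the triangle inequality splits $e_{n}$ into the discrepancy of the two exact flows over one step, started from $u(t_{n-1},y)$ and from $u^{\epsilon}_{n-1}(y)$, plus the one-step error, which is exactly the quantity controlled above (its leading $\epsilon^{2}$ contribution being the Taylor remainder $\tfrac12 L_{r^{*}}M_{r^{*}}\epsilon^{2}$). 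For the first piece I would use the standard continuous-dependence estimate: since both trajectories remain in $B_{r^{*}}$ by Lemma \ref{lemma2.3}, the difference $v(t)=u(t,a)-u(t,b)$ satisfies $\tfrac{d}{dt}\|v\|^{2}\le 2L_{r^{*}}\|v\|^{2}$ by \eqref{2.4}, whence $\|v(\epsilon)\|\le e^{L_{r^{*}}\epsilon}\|v(0)\|$. This yields the linear recursion $e_{n}\le e^{L_{r^{*}}\epsilon}e_{n-1}+\tfrac12 L_{r^{*}}M_{r^{*}}\epsilon^{2}$.

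Unrolling from $e_{0}=0$ produces the geometric sum $e_{n}\le\tfrac12 L_{r^{*}}M_{r^{*}}\epsilon^{2}\,\frac{e^{L_{r^{*}}n\epsilon}-1}{e^{L_{r^{*}}\epsilon}-1}$, and the elementary inequalities $e^{L_{r^{*}}\epsilon}-1\ge L_{r^{*}}\epsilon$ and $n\epsilon=t_{n}\le T$ collapse this to $\tfrac{M_{r^{*}}}{2}e^{L_{r^{*}}T}\epsilon$, which is \eqref{2.5-2}; note how the per-step $O(\epsilon^{2})$ error, summed over the $O(1/\epsilon)$ many steps, becomes first order in $\epsilon$ while the factor $L_{r^{*}}$ cancels. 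I expect the main obstacle to be the implicitness of the scheme: in \eqref{2.2} the vector field is evaluated at the unknown new iterate $u^{\epsilon}_{n}(y)$, and this is precisely what generates the self-referential term $\epsilon L_{r^{*}+1}\|w-u^{\epsilon}_{n}(y)\|$ in the one-step analysis. Its absorption is possible only because $\epsilon\le\epsilon^{*}$ keeps $1-\epsilon L_{r^{*}+1}$ bounded away from $0$, and because the uniform invariance of $B_{r^{*}}$ (Theorem \ref{theorem2.1} and Lemma \ref{lemma2.3}) lets a single Lipschitz constant govern every step; without the latter the local constant could grow along the orbit and the per-step errors would fail to sum to an $O(\epsilon)$ bound.
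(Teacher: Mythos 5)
First, a point of comparison: the paper does not actually prove Theorem \ref{theorem2.5} (or Lemma \ref{lemma2.4}); it defers both to the reference \cite{Liu-2024}. So your proposal is filling in a proof the paper leaves to a citation, and its skeleton is precisely the standard one for implicit schemes: apply the backward Taylor identity of Lemma \ref{lemma2.4} along the trajectory emanating from $u^{\epsilon}_{n-1}(y)$, subtract the scheme identity $u^{\epsilon}_{n-1}(y)=u^{\epsilon}_{n}(y)-\epsilon Fu^{\epsilon}_{n}(y)$ from \eqref{2.2}, absorb the implicit term using $\epsilon\le\epsilon^{*}\le(1+L_{r^{*}+1})^{-1}$ from \eqref{2.1-11}, and then combine the semigroup property, the continuous-dependence bound $\|u(\epsilon,a)-u(\epsilon,b)\|\le e^{L_{r^{*}}\epsilon}\|a-b\|$, and a geometric sum. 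All the invariance facts you invoke (Theorem \ref{theorem2.1} for the discrete orbit, Lemma \ref{lemma2.3} for the continuous one) are correctly deployed, and the two rates $O(\epsilon^{2})$ and $O(\epsilon)$ do follow from your argument.

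The genuine gap is in the constants, and it is not cosmetic because it propagates from \eqref{2.5-1} into \eqref{2.5-2}. Your one-step analysis yields $\|w-u^{\epsilon}_{n}(y)\|\le\tfrac{1}{2}(1+L_{r^{*}+1})L_{r^{*}}M_{r^{*}}\epsilon^{2}$ after the absorption $(1-\epsilon L_{r^{*}+1})^{-1}\le 1+L_{r^{*}+1}$; this implies the stated bound $L_{r^{*}}M_{r^{*}}L_{r^{*}+1}\epsilon^{2}$ only if $\tfrac{1}{2}(1+L_{r^{*}+1})\le L_{r^{*}+1}$, i.e. $L_{r^{*}+1}\ge 1$, which nothing in the paper's hypotheses guarantees (all of $\nu,\alpha,\beta,\lambda$ may be small, since $\lambda^{*}$ shrinks with them, so $L_{r^{*}+1}$ defined in \eqref{2.4} can be below $1$). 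More seriously, in the global step you feed the recursion with the bare Taylor remainder $\tfrac{1}{2}L_{r^{*}}M_{r^{*}}\epsilon^{2}$ per step, silently discarding the absorption factor with the remark that it only affects the ``leading contribution.'' That is inconsistent with your own part one: the recursion you are entitled to is $e_{n}\le e^{L_{r^{*}}\epsilon}e_{n-1}+\tfrac{1}{2}(1+L_{r^{*}+1})L_{r^{*}}M_{r^{*}}\epsilon^{2}$, whose unrolled form is $e_{n}\le\tfrac{1}{2}(1+L_{r^{*}+1})M_{r^{*}}\bigl(e^{L_{r^{*}}T}-1\bigr)\epsilon$, exceeding the stated $\tfrac{M_{r^{*}}}{2}e^{L_{r^{*}}T}\epsilon$ by the factor $1+L_{r^{*}+1}$; note this factor is not repaired even under the extra assumption $L_{r^{*}+1}\ge 1$. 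So, as written, your proposal proves both estimates with the correct orders in $\epsilon$ but with strictly weaker constants than those asserted, and the step where the implicitness penalty is dropped needs either an honest justification (e.g., restating the theorem with the factor $1+L_{r^{*}+1}$ carried through, which is what the method genuinely delivers) or an additional argument showing the stated constants are attainable; the latter cannot be done by the absorption route alone.
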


The proofs of Lemma \ref{lemma2.4} and Theorem \ref{theorem2.5} can be found in \cite{Liu-2024}, where $F$ represents the vector field of the Burgers-Huxley equation.

\section{Numerical attractor for the IES of Burgers-Huxley equation}\label{sec:3}

\subsection{Existence of a unique numerical attractor for the IES}
The discrete-time solution $u^{\epsilon}_{n}$ in Theorem \ref{theorem2.1} defines a discrete dynamical system on the ball $\textit{B}_{r^{*}}$, expressed as:
\begin{equation*}
S_{\epsilon}(n):\textit{B}_{r^{*}}\rightarrow\textit{B}_{r^{*}},~~S_{\epsilon}(n)y=u^{\epsilon}_{n}(y),~~~y\in \textit{B}_{r^{*}},~~n \in \mathds{N},
\end{equation*}
for any $\epsilon\in(0,\epsilon^{*}]$. Moreover, it is easy to obtain $S_{\epsilon}(n+m)=S_{\epsilon}(n)S_{\epsilon}(m)$.

A compact set $\mathcal{A}^{\epsilon}$ in $\ell^2$ is called a numerical attractor of the IES \eqref{2.2} for the Burgers-Huxley equation if $\mathcal{A}^{\epsilon}$ satisfies the following two conditions:\\
1. The compact set $\mathcal{A}^{\epsilon}$ is invariant under the mapping $S_{\epsilon}(n)$, which means that
\begin{equation*}
S_{\epsilon}(n)\mathcal{A}^{\epsilon}=\mathcal{A}^{\epsilon},~~~n \in \mathds{N}.
\end{equation*}
2. The compact set $\mathcal{A}^{\epsilon}$ is attracting, which means that
\begin{equation*}
\lim_{n\rightarrow\infty}d_{\ell^2}\left(S_{\epsilon}(n)\textit{B}_{r^{*}}, \mathcal{A}^{\epsilon}\right)=0,
\end{equation*}  
where $d_{\ell^2}$ denotes the Hausdorff semi-distance, defined as $d(A,B)=\sup\limits_{a\in A}\inf\limits_{b\in B}\|a-b\|$. 

\begin{theorem}\label{theorem3.1}
For any $\epsilon\in (0,\epsilon^{*}]$, the IES \eqref{2.2} for the Burgers-Huxley equation has a unique numerical attractor, which can be expressed as:
\begin{equation*}
\mathcal{A}^{\epsilon}=\bigcap_{m\in \mathds{N}}\overline{\bigcup_{n\geq m}S_{\epsilon}(n)\textit{B}_{r^{*}}}.
\end{equation*}
\end{theorem}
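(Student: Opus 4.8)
The plan is to identify $\mathcal{A}^{\epsilon}$ with the $\omega$-limit set of the absorbing ball $B_{r^{*}}$ and to invoke the classical existence theorem for global attractors of discrete (asymptotically compact) semigroups, whose only substantive hypothesis in this infinite-dimensional lattice setting is asymptotic compactness. The ingredients already in place are: $S_{\epsilon}$ is a discrete semigroup since $S_{\epsilon}(n+m)=S_{\epsilon}(n)S_{\epsilon}(m)$; by Theorem \ref{theorem2.1} it maps $B_{r^{*}}$ into itself, so $B_{r^{*}}$ is a bounded, positively invariant, self-absorbing set; and the Lipschitz estimate \eqref{2.4} together with the contraction construction in Theorem \ref{theorem2.1} shows that each $S_{\epsilon}(n)$ depends continuously on the initial datum. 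Granting asymptotic compactness, the general theory then produces a unique compact invariant attracting set equal to $\omega(B_{r^{*}})=\bigcap_{m\in\mathbb{N}}\overline{\bigcup_{n\geq m}S_{\epsilon}(n)B_{r^{*}}}$, which is exactly the asserted formula, and both defining properties of a numerical attractor follow automatically. Uniqueness holds because any invariant set attracting $B_{r^{*}}$ must coincide with this $\omega$-limit set.

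Thus the whole proof reduces to verifying asymptotic compactness of $S_{\epsilon}(n)$ on $\ell^{2}$, which I would obtain by a discrete tail estimate paralleling the continuous computation in Theorem \ref{theorem3.2}. Using the same cut-off sequence $\xi_{k}$, which satisfies $\|D^{+}\xi_{k}\|\leq c_{1}/k$ by \eqref{theore3.1-1}, I take the inner product of the scheme $u_{n}^{\epsilon}=u_{n-1}^{\epsilon}+\epsilon Fu_{n}^{\epsilon}$ with $\xi_{k}u_{n}^{\epsilon}$, and set $a_{n}:=\sum_{i\in\mathbb{Z}}\xi_{k,i}(u_{n,i}^{\epsilon})^{2}$. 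Splitting the cross term by Young's inequality gives $(u_{n-1}^{\epsilon},\xi_{k}u_{n}^{\epsilon})\leq \tfrac{1}{2}a_{n-1}+\tfrac{1}{2}a_{n}$, the diffusion term is handled as in \eqref{theorem3.2-3}--\eqref{theorem3.2-4}, and the convection together with the cubic reaction term is controlled by the cancellation \eqref{2.1-4}; the damping contributes $-\epsilon\lambda a_{n}$ and the forcing the far-field tail $\sum_{|i|\geq k}f_{i}^{2}$. Since $4\nu+\frac{(2\alpha+\beta+\beta\gamma)^{2}}{4\beta}-\beta\gamma=\lambda^{*}$ by \eqref{2.1-12} and $\lambda>\lambda^{*}$ by \eqref{2.1-0}, collecting terms and moving the $a_{n}$ contributions to the left yields the contractive recursion
\begin{equation*}
a_{n}\leq \frac{1}{1+\epsilon(\lambda-\lambda^{*})}\,a_{n-1}+\frac{2\epsilon}{1+\epsilon(\lambda-\lambda^{*})}\,R_{k},\qquad R_{k}:=\frac{c_{1}\nu}{k}(r^{*})^{2}+\frac{c_{1}\alpha}{3k}(r^{*})^{3}+\frac{1}{2(\lambda-\lambda^{*})}\sum_{|i|\geq k}f_{i}^{2},
\end{equation*}
where I have used $\|u_{n}^{\epsilon}\|_{3}\leq\|u_{n}^{\epsilon}\|_{2}\leq r^{*}$ to bound the $\ell^{3}$ contribution.

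Iterating this recursion over $n$ and summing the geometric factor gives $a_{n}\leq (1+\epsilon(\lambda-\lambda^{*}))^{-n}a_{0}+\frac{2R_{k}}{\lambda-\lambda^{*}}$ with $a_{0}\leq (r^{*})^{2}$. Because $(1+\epsilon(\lambda-\lambda^{*}))^{-1}<1$, the first term tends to $0$ as $n\to\infty$ uniformly for $u_{0}\in B_{r^{*}}$, while $R_{k}\to 0$ as $k\to\infty$ since $f\in\ell^{2}$. Hence
\begin{equation*}
\lim_{k,n\to\infty}\ \sup_{y\in B_{r^{*}}}\ \sum_{|i|\geq 2k}\bigl(u_{n,i}^{\epsilon}(y)\bigr)^{2}=0,
\end{equation*}
i.e. the tails of $S_{\epsilon}(n)B_{r^{*}}$ are uniformly small at infinity. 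Combined with the boundedness of $S_{\epsilon}(n)B_{r^{*}}$ in $\ell^{2}$, the standard uniform-tail precompactness criterion in $\ell^{2}$ yields asymptotic compactness, completing the verification and hence the existence and uniqueness of $\mathcal{A}^{\epsilon}$.

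The hard part will be the bookkeeping in the discrete tail estimate rather than any conceptual difficulty: because the implicit scheme evaluates $F$ at the new level $u_{n}^{\epsilon}$, every nonlinear and cross contribution lands on $a_{n}$, and one must check that after Young's inequality the coefficient of $a_{n}$ can be absorbed to the left to produce the contraction factor $1/(1+\epsilon(\lambda-\lambda^{*}))$ rather than a constant at least $1$. This is precisely the weighted, cut-off analogue of the energy balance already carried out in Theorem \ref{theorem2.1}, so the same definition of $\lambda^{*}$ in \eqref{2.1-12} and the nonlinear cancellation \eqref{2.1-4} are what make the recursion contractive, while the cut-off errors are uniform in $n$ exactly because the orbit remains trapped in $B_{r^{*}}$.
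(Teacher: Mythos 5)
Your proposal is correct and follows essentially the same route as the paper: reduce existence and uniqueness of $\mathcal{A}^{\epsilon}$ to the absorbing property of $\textit{B}_{r^{*}}$ plus an asymptotic tails estimate (citing the standard discrete-attractor framework), then prove the tail estimate by pairing the scheme with $\xi_{k}u_{n}^{\epsilon}$, using the cancellation \eqref{2.1-4} and $\lambda>\lambda^{*}$ to obtain the contractive recursion $a_{n}\leq \frac{1}{1+\epsilon(\lambda-\lambda^{*})}a_{n-1}+O(\epsilon R_{k})$, and iterating. Your constants and the order of limits ($n\to\infty$ then $k\to\infty$ versus the paper's choice of $k_{\eta}$ first) differ only cosmetically from the paper's computation.
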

\begin{proof}
Because $\textit{B}_{r^{*}}$ is absorbing for the discrete dynamical system, and following a similar result as stated in \cite{Li-2023-2, Han-2020}, it is sufficient to prove the asymptotic tails property. That is for any $\eta>0$, there exist $N,I\in \mathds{N}$ such that the following inequality holds:
\begin{equation}\label{theore3.1-0}
\sum_{|i|\geq I}|u^{\epsilon}_{n,i}(y)|^{2}<\eta,~~~\forall n\geq N,~~y\in \textit{B}_{r^{*}},
\end{equation}
where $u^{\epsilon}_{n}(y)=\left( u^{\epsilon}_{n,i}(y)\right)_{i\in \mathds{Z}}$ is the unique solution of \eqref{2.2}. For convenience in the proof, we will omit $\epsilon,(y)$, and use the concise notation $u_n$ instead of $u^{\epsilon}_{n}(y)$ in the following.

Here, we outline the proof for the asymptotic tails property, utilizing the notations introduced in Theorem \ref{theorem3.2}.

We take the inner product of \eqref{2.2} with $\xi_{k} u^{\epsilon}_{n}$ in $\ell^2$ , then we get 
\begin{equation}\label{3.1-3}
\begin{aligned}
\sum_{i\in \mathbb{Z}}\xi_{k,i}u^{2}_{n,i}=\left( u_{n-1},\xi_{k}u_{n}\right)+\epsilon\left(\nu\Lambda u_{n} - \alpha u_{n} D^{-} u_{n} + \beta u_{n}(1-u_{n})(u_{n}-\gamma) -\lambda u_{n}+ f, \xi_{k}u_{n}\right).
\end{aligned}
\end{equation}
	The first term on the right-hand side of the equality \eqref{3.1-3} is estimated as below:
	\begin{equation}\label{3.1-4}
	\left( u_{n-1}, \xi_{k}u_{n}\right) =\sum_{i\in \mathbb{Z}}\xi_{k,i}u_{n,i}u_{n-1,i}\leq \frac{1}{2}\sum_{i\in \mathbb{Z}}\xi_{k,i}u_{n-1,i}^2 + \frac{1}{2}\sum_{i\in \mathbb{Z}}\xi_{k,i}u_{n,i}^2. 
	\end{equation} 
Using the same method as in the derivation of \eqref{theorem3.2-3}-\eqref{theorem3.2-4}, there holds
\begin{equation}\label{3.1-5}
\begin{aligned}
\epsilon\nu\left(\Lambda u_{n}, \xi_{k}u_{n}\right) & \leq 3\epsilon\nu \sum_{i\in \mathbb{Z}} \xi_{k,i}u_{n,i}^{2}+\frac{1}{2}\epsilon\nu \frac{c_{1}}{k}\left(r^{*}\right)^{2} + \frac{1}{2}\epsilon\nu\sum_{i\in \mathbb{Z}} \xi_{k,i}u_{n,i}^{2}+\frac{1}{2}\epsilon\nu \frac{c_{1}}{k}\left(r^{*}\right)^{2} + \frac{1}{2}\epsilon\nu\sum_{i\in \mathbb{Z}} \xi_{k,i}u_{n,i}^{2}\\
& = 4\epsilon\nu \sum_{i\in \mathbb{Z}} \xi_{k,i}u_{n,i}^{2} + \epsilon\nu\frac{c_{1}}{k}\left(r^{*}\right)^{2}.
\end{aligned}
\end{equation} 
The third term is estimated using a similar method as follows:
\begin{equation}\label{3.1-55}
\begin{aligned}
- \epsilon\alpha \left( u_{n} D^{-} u_{n}, \xi_{k}u_{n}\right) & = - \epsilon\alpha  \sum_{i\in \mathbb{Z}} \xi_{k,i}\left( u_{n,i}^{2}u_{n,i-1} - u_{n,i}^{3}\right)\\
& \leq \epsilon\alpha  \sum_{i\in \mathbb{Z}} \xi_{k,i}|u_{n,i}|^{3} + \epsilon\alpha \sum_{i\in \mathbb{Z}} \xi_{k,i}\left( \frac{2}{3}|u_{n,i}|^3 + \frac{1}{3} |u_{n,i-1}|^3\right)\\
& =\frac{5}{3}\epsilon\alpha  \sum_{i\in \mathbb{Z}} \xi_{k,i}|u_{n,i}|^{3} + \frac{1}{3}\epsilon\alpha \sum_{i\in \mathbb{Z}} \xi_{k,i}|u_{n,i-1}|^3\\
& =\frac{5}{3}\epsilon\alpha  \sum_{i\in \mathbb{Z}} \xi_{k,i}|u_{n,i}|^{3} + \frac{1}{3}\epsilon\alpha \sum_{i\in \mathbb{Z}} \left[ \xi_{k,i-1}|u_{n,i-1}|^3 + \left(\xi_{k,i}-\xi_{k,i-1}\right)|u_{n,i-1}|^3\right]\\
& \leq 2\epsilon\alpha  \sum_{i\in \mathbb{Z}} \xi_{k,i}|u_{n,i}|^{3} + \frac{c_{1}\epsilon\alpha }{3k}\|u\|_3^{3} \\
& \leq 2\epsilon\alpha \sum_{i\in \mathbb{Z}} \xi_{k,i}|u_{n,i}|^{3} + \frac{c_{1}\epsilon\alpha}{3k}\left(r^{*}\right)^{3}.
\end{aligned}
\end{equation} 
The fourth term is reorganized into the following form:
\begin{equation}\label{3.1-6}
\begin{aligned}
\epsilon\beta \left( u_{n}(1-u_{n})(u_{n}-\gamma),\xi_{k} u_{n}\right) & = \epsilon\beta \left( -u_{n}^{3}+(1+\gamma) u_{n}^{2} - \gamma u_{n}, \xi_{k}u_{n} \right)\\
& \leq -\epsilon\beta \sum_{i\in \mathbb{Z}} \xi_{k,i} u_{n,i}^{4}+\epsilon\beta(1+\gamma)\sum_{i\in \mathbb{Z}} \xi_{k,i} |u_{n,i}|^{3} - \epsilon\beta\gamma \sum_{i\in \mathbb{Z}} \xi_{k,i} u_{n,i}^{2}.
\end{aligned}
\end{equation} 
From inequality \eqref{2.1-0}, the last term on the right-hand side of the equality \eqref{3.1-3} is estimated as below:
\begin{equation}\label{3.1-7}
\begin{aligned}
\epsilon\left(f, \xi_{k}u_{n}\right) & = \epsilon \sum_{i\in \mathbb{Z}} \xi_{k,i} u_{n,i}f_{i}
\leq  \frac{\epsilon\left(\lambda-\lambda^{*}\right)}{2}\sum_{i\in \mathbb{Z}} \xi_{k,i} u_{n,i}^{2}+\frac{\epsilon}{2\left(\lambda-\lambda^{*}\right)}\sum_{i\in \mathbb{Z}} \xi_{k,i} f_{i}^{2}.
\end{aligned}
\end{equation} 
Finally, by substituting \eqref{3.1-4}-\eqref{3.1-7} into \eqref{3.1-3}, we have
\begin{equation}\label{3.1-8}
\begin{aligned}
\sum_{i\in \mathbb{Z}} \xi_{k,i} u_{n,i}^{2} &\leq \frac{1}{2}\sum_{i\in \mathbb{Z}} \xi_{k,i} u_{n-1,i}^{2} + \left[\frac{1}{2} + 4\epsilon\nu - \epsilon\beta\gamma - \epsilon \lambda + \frac{\epsilon\left(\lambda-\lambda^{*}\right)}{2}\right]\sum_{i\in \mathbb{Z}} \xi_{k,i} u_{n,i}^{2}\\
&~~~~+\epsilon\left( 2\alpha + \beta + \beta\gamma \right) \sum_{i\in \mathbb{Z}} \xi_{k,i} |u_{n,i}|^{3} - \epsilon\beta \sum_{i\in \mathbb{Z}} \xi_{k,i} u_{n,i}^{4}
+\frac{\epsilon}{2\left(\lambda-\lambda^{*}\right)}\sum_{i\in \mathbb{Z}} \xi_{k,i} f_{i}^{2} + \epsilon\nu\frac{c_{1}}{k}\left(r^{*}\right)^{2} + \frac{c_{1}\epsilon\alpha}{3k}\left(r^{*}\right)^{3}\\
&\leq  \frac{1}{2}\sum_{i\in \mathbb{Z}} \xi_{k,i} u_{n-1,i}^{2}+\left[\frac{1}{2} + 4\epsilon\nu - \epsilon\beta\gamma - \epsilon \lambda + \frac{\epsilon\left(\lambda-\lambda^{*}\right)}{2}\right]\sum_{i\in \mathbb{Z}} \xi_{k,i} u_{n,i}^{2}\\
&~~~+\epsilon\beta\sum_{i\in \mathbb{Z}} \xi_{k,i} u_{n,i}^{4}+\frac{\epsilon\left( 2\alpha + \beta + \beta\gamma \right)^2}{4\beta} \sum_{i\in \mathbb{Z}} \xi_{k,i} u_{n,i}^{2} - \epsilon\beta \sum_{i\in \mathbb{Z}} \xi_{k,i} u_{n,i}^{4}\\
&~~~+\frac{\epsilon}{2\left(\lambda-\lambda^{*}\right)}\sum_{i\in \mathbb{Z}} \xi_{k,i} f_{i}^{2} + \epsilon\nu\frac{c_{1}}{k}\left(r^{*}\right)^{2} + \frac{c_{1}\epsilon\alpha}{3k}\left(r^{*}\right)^{3}\\
&\leq \frac{1}{2}\sum_{i\in \mathbb{Z}} \xi_{k,i} u_{n-1,i}^{2} + \frac{1-\epsilon(\lambda-\lambda^{*})}{2}\sum_{i\in \mathbb{Z}} \xi_{k,i} u_{n,i}^{2}+\epsilon\left[\frac{c_{2}}{k}+\frac{1}{2(\lambda-\lambda^{*})}\sum_{|i|\geq k}f_{i}^{2}\right],
\end{aligned}
\end{equation} 
where $c_{2}=c_{1}\nu\left(r^{*}\right)^2+\frac{c_{1}\alpha}{3}\left(r^{*}\right)^3$. As the last term in \eqref{3.1-8} tends to zero, we can select $k_{\eta}>0$ such that
\begin{equation*}
\frac{c_{2}}{k}+\frac{1}{2(\lambda-\lambda^{*})}\sum_{|i|\geq k}f_{i}^{2}<\frac{\eta(\lambda-\lambda^{*})}{4},~~~\forall k\geq k_{\eta}.
\end{equation*}
Thus, by a straightforward calculation, inequality \eqref{3.1-8} takes the following form
\begin{equation}\label{3.1-9}
\sum_{i\in \mathbb{Z}} \xi_{k,i} u_{n,i}^{2} \leq \frac{1}{1+\epsilon(\lambda-\lambda^{*})}\sum_{i\in \mathbb{Z}} \xi_{k,i} u_{n-1,i}^{2} + \frac{\epsilon(\lambda-\lambda^{*})}{1+\epsilon(\lambda-\lambda^{*})} \frac{\eta}{2}.
\end{equation}
Based on the recurrence relation, as $n\rightarrow \infty$ for any $k\geq k_{\eta}$, we get
\begin{equation*}
\begin{aligned}
\sum_{i\in \mathbb{Z}} \xi_{k,i} u_{n,i}^{2} & \leq \frac{1}{\left[1+\epsilon(\lambda-\lambda^{*})\right]^{n}}\sum_{i\in \mathbb{Z}} \xi_{k,i} u_{0,i}^{2} + \frac{\eta}{2}\sum_{m=1}^{n}\frac{\epsilon(\lambda-\lambda^{*})}{\left[1+\epsilon(\lambda-\lambda^{*})\right]^m}\\
&\leq \frac{(r^{*})^2}{\left[1+\epsilon(\lambda-\lambda^{*})\right]^{n}} + \frac{\eta}{2} \rightarrow \frac{\eta}{2}.
\end{aligned}
\end{equation*}
Thus, we have
\begin{equation*}
\sum_{|i|\geq 2k} u_{n,i}^{2} \leq \sum_{i\in \mathbb{Z}} \xi_{k,i} u_{n,i}^{2}<\eta,~~~\forall n\geq N,~~k\geq k_{\eta}.
\end{equation*}
Chooseing $I=2k_{\eta}$, and the theorem is proved.
\end{proof}

\subsection{Approximation from numerical attractor to the global attractor}

Below, we present a theorem concerning the convergence of the numerical attractor to the global attractor under the Hausdorff semi-distance.
\begin{theorem}\label{theorem3.3}
Let $\mathcal{A}$ denote the global attractor of the lattice system \eqref{2.1}, and let $\mathcal{A}^{\epsilon}$ denote the numerical attractor of the IES \eqref{2.2} for the Burgers-Huxley equation. Then, the following conclusions hold:\\
(i) The numerical attractor $\mathcal{A}^{\epsilon}$ is upper semi-convergence to the global attractor $\mathcal{A}$ under the Hausdorff semi-distance, that is,
\begin{equation*}
\lim_{\epsilon\rightarrow 0^{+}}d_{\ell^2}\left(\mathcal{A}^\epsilon,\mathcal{A} \right)=0.
\end{equation*}
(ii) The numerical attractor $\mathcal{A}^{\epsilon}$ is upper semi-continuous on $(0,\epsilon^{*}]$, that is
\begin{equation*}
\lim_{\epsilon\rightarrow \epsilon_0}d_{\ell^2}\left(\mathcal{A}^\epsilon,\mathcal{A}^{\epsilon_0} \right)=0,~~~\forall \epsilon_0\in (0,\epsilon^{*}].
\end{equation*}
\end{theorem}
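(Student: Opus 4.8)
The plan is to prove both parts by the standard two-ingredient strategy for upper semi-convergence of attractors: combine the uniform boundedness of all the attractors inside $B_{r^*}$ with finite-time convergence of the approximating flow to the limiting flow, and then transport this finite-time information onto the attractors through their invariance (negative orbits). Throughout I use that, by Theorem~\ref{theorem2.1} and Theorem~\ref{theorem3.1}, every $\mathcal{A}^{\epsilon}\subset B_{r^*}$, and by Theorem~\ref{theorem3.2} also $\mathcal{A}\subset B_{r^*}$.

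For part (i), I would first fix $\eta>0$ and use that $\mathcal{A}$ attracts $B_{r^*}$ in the continuous-time flow (Lemma~\ref{lemma2.3}, Theorem~\ref{theorem3.2}) to pick $T_0>0$ with $d_{\ell^2}(u(t,B_{r^*}),\mathcal{A})<\eta/2$ for all $t\geq T_0$. For each small $\epsilon$, set the step index $N=N(\epsilon):=\lceil T_0/\epsilon\rceil$, so that $t_N=N\epsilon\in[T_0,T_0+\epsilon^{*})$ stays in a bounded window independent of $\epsilon$. The crucial step invokes invariance of the numerical attractor: since $S_{\epsilon}(N)\mathcal{A}^{\epsilon}=\mathcal{A}^{\epsilon}$, any $a\in\mathcal{A}^{\epsilon}$ can be written $a=S_{\epsilon}(N)b$ with $b\in\mathcal{A}^{\epsilon}\subset B_{r^*}$, whence
\begin{equation*}
d_{\ell^2}(a,\mathcal{A})\leq \|S_{\epsilon}(N)b-u(t_N,b)\|+d_{\ell^2}(u(t_N,b),\mathcal{A}).
\end{equation*}
The second term is below $\eta/2$ because $t_N\geq T_0$ and $u(t_N,b)\in u(t_N,B_{r^*})$; the first term is controlled by the error estimate \eqref{2.5-2} of Theorem~\ref{theorem2.5} applied with $T=T_0+\epsilon^{*}$, giving $\|S_{\epsilon}(N)b-u(t_N,b)\|\leq \tfrac{M_{r^*}}{2}e^{L_{r^*}(T_0+\epsilon^{*})}\epsilon$ uniformly in $b\in B_{r^*}$. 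Taking the supremum over $a\in\mathcal{A}^{\epsilon}$ and then $\epsilon$ small enough yields $d_{\ell^2}(\mathcal{A}^{\epsilon},\mathcal{A})<\eta$.

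For part (ii) the same scheme applies, with the continuous flow $u(t_N,\cdot)$ replaced by the discrete flow $S_{\epsilon_0}(N)$ of the limiting step size. The new ingredient I must supply is continuous dependence of the implicit Euler map on $\epsilon$, uniform on $B_{r^*}$ over a fixed finite horizon: for each fixed $N$, $\sup_{y\in B_{r^*}}\|S_{\epsilon}(N)y-S_{\epsilon_0}(N)y\|\to 0$ as $\epsilon\to\epsilon_0$. I would prove this inductively from the defining relation $u_n^{\epsilon}=u_{n-1}^{\epsilon}+\epsilon Fu_n^{\epsilon}$: subtracting the relations for $\epsilon$ and $\epsilon_0$, writing $\epsilon Fu_n^{\epsilon}-\epsilon_0 Fu_n^{\epsilon_0}=\epsilon(Fu_n^{\epsilon}-Fu_n^{\epsilon_0})+(\epsilon-\epsilon_0)Fu_n^{\epsilon_0}$, and using \eqref{2.3}--\eqref{2.4} together with $\epsilon^{*}L_{r^*+1}<1$ (from \eqref{2.1-11}) to absorb the implicit term, one gets a recursion $\|u_n^{\epsilon}-u_n^{\epsilon_0}\|\leq C\|u_{n-1}^{\epsilon}-u_{n-1}^{\epsilon_0}\|+C'|\epsilon-\epsilon_0|$, hence $\|u_N^{\epsilon}-u_N^{\epsilon_0}\|\leq C_N|\epsilon-\epsilon_0|$. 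With this, writing $a=S_{\epsilon}(N)b$ for $a\in\mathcal{A}^{\epsilon}$ and estimating
\begin{equation*}
d_{\ell^2}(a,\mathcal{A}^{\epsilon_0})\leq \|S_{\epsilon}(N)b-S_{\epsilon_0}(N)b\|+d_{\ell^2}(S_{\epsilon_0}(N)B_{r^*},\mathcal{A}^{\epsilon_0}),
\end{equation*}
one makes the second term $<\eta/2$ by choosing $N$ large (attraction of $\mathcal{A}^{\epsilon_0}$ from Theorem~\ref{theorem3.1}), and then the first term $<\eta/2$ by taking $\epsilon$ close to $\epsilon_0$.

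I expect the main obstacle to be the time-scale bookkeeping in part (i): the bound \eqref{2.5-2} degrades like $e^{L_{r^*}T}$, so the comparison horizon cannot be allowed to grow as $\epsilon\to0$. The resolution is precisely the choice $N=\lceil T_0/\epsilon\rceil$, which keeps $t_N$ in the fixed window $[T_0,T_0+\epsilon^{*})$ while still exceeding the attraction time $T_0$; this is what lets the $O(\epsilon)$ discretization error survive the limit. In part (ii) the analogous subtlety is benign, since the discrete horizon $N$ is fixed \emph{before} sending $\epsilon\to\epsilon_0$, so the continuous-dependence constant $C_N$ is a fixed finite number.
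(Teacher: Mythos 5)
Your proposal is correct, and it is in fact more than the paper itself provides: the paper does not write out a proof of Theorem~\ref{theorem3.3} at all, but merely observes that the argument is standard given the discretization error (Theorem~\ref{theorem2.5}), the well-posedness and global attractor of the continuous system (Lemma~\ref{lemma2.3}, Theorem~\ref{theorem3.2}), and the well-posedness and numerical attractor of the discrete system (Theorems~\ref{theorem2.1} and~\ref{theorem3.1}), deferring details to the cited references. Your argument is a correct self-contained instantiation of exactly that strategy: for part (i), the decomposition through invariance ($a=S_{\epsilon}(N)b$), the choice $N=\lceil T_0/\epsilon\rceil$ keeping $t_N$ in the fixed window $[T_0,T_0+\epsilon^{*})$, and the uniform error bound \eqref{2.5-2} with $T=T_0+\epsilon^{*}$ fit together with the correct quantifier order, and you rightly identify the exponential degradation $e^{L_{r^*}T}$ as the reason the comparison horizon must stay bounded. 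For part (ii), you correctly recognize that the paper's ingredients do not suffice as stated and supply the missing lemma — continuity of the implicit Euler map in the step size, uniformly on $B_{r^*}$ over a fixed finite horizon — and your inductive proof of it is sound: the implicit term is absorbed because $\epsilon L_{r^{*}}\leq \epsilon^{*}L_{r^{*}+1}<1$ by \eqref{2.1-11}, and \eqref{2.3}--\eqref{2.4} control the remainder, giving $\|u_N^{\epsilon}-u_N^{\epsilon_0}\|\leq C_N|\epsilon-\epsilon_0|$ with $C_N$ fixed before the limit $\epsilon\to\epsilon_0$ is taken. In short, your write-up supplies the details the paper outsources, and the step-size-continuity estimate in part (ii) is a genuine addition not even sketched in the paper or its stated ingredient list.
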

\begin{proof}
	Ever since \cite{Han-2020} the proof of this type of upper semicontinuous have been attached great importance to and improved all along. Now the method to prove it is more or less standard . Broadly speaking, supplied with discretization error in Theorem \ref{theorem2.5},  
	the unique existence of solutions (Lemma \ref{lemma2.3}) and unique  global attractor (Theorem \ref{theorem3.2}) for Burgers-Huxley equation, as well as the unique existence of solutions (Theorem \ref{theorem2.1}) and unique numerical attractor (Theorem \ref{theorem3.1}) for discrete-time Burgers-Huxley equation.
For a detailed proof, please refer to \cite{Han-2020,Li-2023-2,Liu-2024}.
\end{proof}

\section{Finite-dimensional approximation of numerical attractor}\label{sec:5}
\subsection{Existence of truncated numerical attractor}
We consider the finite-dimensional approximation for the IES \eqref{2.2}. For $m\in\mathds{N}$ we study the system of implicit difference equations in $\mathbb R^{2m + 1}$ by truncating the infinite-dimensional system as follows: 
\begin{equation}\label{4.1}
\left\{ \begin{array}{cl}
u_{n,-m}^{\epsilon,m}=&u_{n-1,-m}^{\epsilon,m}+\epsilon\left[ \nu\left( 2u_{n,-m}^{\epsilon,m}-u_{n,-m+1}^{\epsilon,m}\right)+\alpha \left(u_{n,-m}^{\epsilon,m}
\right)^2 \right.\vspace{0.1cm}\\
&\left. +\beta u_{n,-m}^{\epsilon,m}\left(1-u_{n,-m}^{\epsilon,m} \right)\left(u_{n,-m}^{\epsilon,m} -\gamma \right)-\lambda u_{n,-m}^{\epsilon,m}+f_{-m}\right],\\
u_{n,-m+1}^{\epsilon,m}=&u_{n-1,-m+1}^{\epsilon,m}+\epsilon\left[ \nu\left( -u_{n,-m}^{\epsilon,m}+2u_{n,-m+1}^{\epsilon,m}-u_{n,-m+2}^{\epsilon,m}\right)-\alpha u_{n,-m+1}^{\epsilon,m} \left(u_{n,-m}^{\epsilon,m}-u_{n,-m+1}^{\epsilon,m}\right) \right.\vspace{0.1cm}\\
&\left. +\beta u_{n,-m+1}^{\epsilon,m}\left(1-u_{n,-m+1}^{\epsilon,m} \right)\left(u_{n,-m+1}^{\epsilon,m} -\gamma \right)-\lambda u_{n,-m+1}^{\epsilon,m}+f_{-m+1}\right],\\
& {\mspace{6mu}\mspace{6mu}\mspace{6mu}\mspace{6mu}\mspace{6mu}\mspace{6mu}\mspace{6mu}\mspace{6mu}\mspace{6mu}\mspace{6mu}\mspace{6mu}\mspace{6mu}\mspace{6mu}\mspace{6mu}\mspace{6mu}\mspace{6mu} \vdots} \\
u_{n,m-1}^{\epsilon,m}=&u_{n-1,m-1}^{\epsilon,m}+\epsilon\left[ \nu\left( -u_{n,m-2}^{\epsilon,m}+2u_{n,m-1}^{\epsilon,m}-u_{n,m}^{\epsilon,m}\right)-\alpha u_{n,m-1}^{\epsilon,m} \left(u_{n,m-2}^{\epsilon,m}-u_{n,m-1}^{\epsilon,m}\right) \right.\vspace{0.1cm}\\
&\left. +\beta u_{n,m-1}^{\epsilon,m}\left(1-u_{n,m-1}^{\epsilon,m} \right)\left(u_{n,m-1}^{\epsilon,m} -\gamma \right)-\lambda u_{n,m-1}^{\epsilon,m}+f_{m-1}\right], \\
u_{n,m}^{\epsilon,m}=&u_{n-1,m}^{\epsilon,m}+\epsilon\left[ \nu\left( -u_{n,m-1}^{\epsilon,m}+2u_{n,m}^{\epsilon,m}\right)-\alpha u_{n,m}^{\epsilon,m} \left(u_{n,m-1}^{\epsilon,m}-u_{n,m}^{\epsilon,m}\right) \right.\vspace{0.1cm}\\
&\left. +\beta u_{n,m}^{\epsilon,m}\left(1-u_{n,m}^{\epsilon,m} \right)\left(u_{n,m}^{\epsilon,m} -\gamma \right)-\lambda u_{n,m}^{\epsilon,m}+f_{m}\right].
\end{array} \right.
\end{equation}
The initial condition is given by
\begin{equation*}
u_{0}^{\epsilon,m} = z \in \mathbb{R}^{2m + 1}.
\end{equation*}
We apply the Dirichlet boundary condition as in \cite{Liu-2024}, where $u_{n,-m-1}^{\epsilon,m}=u_{n,m+1}^{\epsilon,m}=0$ are used in the first and last equations of \eqref{4.1}.

Suppose
\begin{equation} \label{4.1-1}
D^{-}_m= \begin{pmatrix}
{- 1} & 0 & 0 & \cdots & 0 \\
1 & {- 1} & 0 & \cdots & 0 \\
0 & 1 & {- 1} & \cdots & 0 \\
\vdots & \ddots  & \ddots &\ddots  & \vdots   \\
0 & 0 & \cdots & 1 & {- 1}
\end{pmatrix}\in  \left(\mathbb{R}^{(2m + 1)}\right)^2,
\end{equation}
then we have
\begin{equation} \label{4.1-2}
\Lambda_{m} = D^{+}_m D^{-}_m= \begin{pmatrix}
2 & {- 1} & 0 & \cdots & 0 & 0 & 0 \\
{- 1} & 2 & {- 1} & \cdots & 0 & 0 & 0 \\
0 & {- 1} & 2 & \cdots & 0 & 0 & 0 \\
\vdots & \vdots  & \vdots & \ddots  & \vdots &\vdots  & \vdots   \\
0 & 0 & 0 & \cdots & {- 1} & 2 & {- 1} \\
0 & 0 & 0 & \cdots & 0 & {- 1} & 1
\end{pmatrix}.
\end{equation}
Let $f^{m}=(f_{i})_{|i|\leq m} \in \mathbb{R}^{(2m + 1)}$.
The truncated system \eqref{4.1} can then be written as follows:
\begin{equation}\label{4.2}
\left\{\begin{array}{cl}
&u_{n}^{\epsilon,m}= u_{n-1}^{\epsilon,m} + \epsilon \left( \nu \Lambda_{m} u_{n}^{\epsilon,m}
-\alpha u_{n}^{\epsilon,m} D^{-}_{m} u_{n}^{\epsilon,m} + \beta u_{n}^{\epsilon,m}(1-u_{n}^{\epsilon,m})(u_{n}^{\epsilon,m}-\gamma) -\lambda u_{n}^{\epsilon,m}+f^{m}\right),\\
&u_{0}^{\epsilon,m} = z \in \mathbb{R}^{2m + 1}.
\end{array} \right.
\end{equation}
The truncated vector field is also expressed as:
\begin{equation*}
F_{m}z = \nu \Lambda_{m} z
-\alpha z D^{-}_{m} z + \beta z(1-z)(z-\gamma) -\lambda z + f^{m},~~~\forall z \in \mathbb{R}^{2m + 1}.
\end{equation*}

Let $\textit{B}^{m}_{r}$ be the ball of radius $r>0$ in $\mathbb{R}^{2m + 1}$. According to Lemma \ref{lemma2.1}, for any $y,z\in \textit{B}^{m}_{r}$, we have
\begin{equation}\label{4.3}
\begin{aligned}
&\|F_{m}y\|\leq \beta r^{3} + (2\alpha +\beta + \beta\gamma)r^{2} + (4\nu + \beta\gamma + \lambda)r +\|f^{m}\| \\
&~~~~~~~~~~\leq  \beta r^{3} + (2\alpha +\beta + \beta\gamma)r^{2} + (4\nu + \beta\gamma + \lambda)r +\|f\|=M_r,
\end{aligned}
\end{equation}
\begin{equation}\label{4.4}
\|F_{m}y - F_{m}z\| \leq L_{r}\|y-z\|,\hspace{5.4cm}
\end{equation}
here $M_{r}$ and $L_{r}$ are the same as those in Lemma \ref{lemma2.1}. By using inequality \eqref{4.3} and \eqref{4.4}, the existence of solution and attractor for the truncated equation \eqref{4.2} can be proven.

\begin{theorem}\label{theorem4.1}
For any $\epsilon\in(0,\epsilon^{*}]$ and $u_{0}^{\epsilon,m}\in \textit{B}^m_{r^{*}}$ with $m\in \mathbb{N}$, the truncated system \eqref{4.2} possesses a unique solution $u_{n}^{\epsilon,m}\in \textit{B}^m_{r^{*}}$ for any $n\in \mathds{N}$. Furthermore, the solution semigroup possesses a unique finite-dimensional numerical attractor $\mathcal{A}^{\epsilon}_{m}$ (also known as the truncated numerical attractor) as given below:
\begin{equation}\label{theorem4.1-1}
\mathcal{A}^{\epsilon}_{m}=\bigcap_{N=1}^{\infty}\overline{\bigcup_{n=N}^{\infty}u_n^{\epsilon,m}\left(\textit{B}^m_{r^{*}}\right)}.
\end{equation}
\end{theorem}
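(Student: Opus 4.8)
The plan is to transplant the arguments of Theorem \ref{theorem2.1} and Theorem \ref{theorem3.1} to the finite-dimensional setting, the decisive point being that the truncated field $F_{m}$ already satisfies the same bound and Lipschitz estimates \eqref{4.3}--\eqref{4.4} as $F$, while finite-dimensionality renders compactness automatic and thereby removes the tail estimate that was the hard part of Theorem \ref{theorem3.1}.

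For the existence and uniqueness of the solution I would first prove that $\textit{B}^m_{r^{*}}$ is positively invariant. Taking the inner product of \eqref{4.2} with $u_{n}^{\epsilon,m}$ in $\mathbb{R}^{2m+1}$, the matrix $\Lambda_{m}$ in \eqref{4.1-2} is symmetric with $(\Lambda_{m}z,z)\geq 0$ and $\|\Lambda_{m}\|\leq 4$, so the same componentwise Young's inequality used in \eqref{2.1-3} gives $\epsilon\nu(\Lambda_{m}u_{n}^{\epsilon,m},u_{n}^{\epsilon,m})\leq 4\epsilon\nu\|u_{n}^{\epsilon,m}\|^{2}$; the advection, reaction and forcing terms are handled exactly as in \eqref{2.1-4}--\eqref{2.1-5}, the Dirichlet convention $u_{n,-m-1}^{\epsilon,m}=u_{n,m+1}^{\epsilon,m}=0$ only deleting boundary contributions without altering any sign. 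This reproduces the recurrence \eqref{2.1-6}, whence $u_{n-1}^{\epsilon,m}\in\textit{B}^m_{r^{*}}$ implies $u_{n}^{\epsilon,m}\in\textit{B}^m_{r^{*}}$. I would then define $\Phi^{\epsilon}_{z}y=z+\epsilon F_{m}y$ on $\textit{B}^m_{r^{*}+1}$ and invoke \eqref{4.3}, \eqref{4.4} together with the choice of $\epsilon^{*}$ in \eqref{2.1-11}: $\Phi^{\epsilon}_{z}$ maps $\textit{B}^m_{r^{*}+1}$ into itself and is a contraction with constant $L_{r^{*}+1}/(1+L_{r^{*}+1})<1$, so the Banach fixed point theorem on the complete set $\textit{B}^m_{r^{*}+1}\subset\mathbb{R}^{2m+1}$ furnishes the unique solution at $n=1$; induction on $n$, replacing $z$ by $u_{k}^{\epsilon,m}$, then yields a unique $u_{n}^{\epsilon,m}\in\textit{B}^m_{r^{*}}$ for every $n\in\mathbb{N}$, exactly as in Theorem \ref{theorem2.1}.

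For the attractor, the solution map defines a continuous discrete semigroup $S_{\epsilon}^{m}(n):\textit{B}^m_{r^{*}}\to\textit{B}^m_{r^{*}}$, continuity in the initial datum following from \eqref{4.4}, and iterating \eqref{2.1-6} gives $\limsup_{n\to\infty}\|u_{n}^{\epsilon,m}\|<r^{*}$, so that $\textit{B}^m_{r^{*}}$ is also absorbing. Here is where finite-dimensionality pays off: $\textit{B}^m_{r^{*}}$ is a closed bounded subset of $\mathbb{R}^{2m+1}$, hence compact by Heine--Borel, so the asymptotic tails estimate needed in Theorem \ref{theorem3.1} is superfluous. Possessing a compact, positively invariant, absorbing set, the standard theory of dissipative discrete dynamical systems then produces the unique numerical attractor as the $\omega$-limit set $\mathcal{A}^{\epsilon}_{m}=\bigcap_{N\geq 1}\overline{\bigcup_{n\geq N}S_{\epsilon}^{m}(n)\textit{B}^m_{r^{*}}}$, which is precisely \eqref{theorem4.1-1}.

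The only step calling for genuine care is checking that the boundary rows of $\Lambda_{m}$ and $D^{-}_{m}$ leave the dissipativity estimates intact; once $(\Lambda_{m}z,z)\geq 0$ is confirmed and the index-shift cancellations in the cubic advection term are seen to produce no surviving boundary flux under the Dirichlet convention, every inequality transfers from the bi-infinite lattice to the truncated system with identical constants, and what remains is routine.
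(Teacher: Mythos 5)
Your proposal is correct and takes essentially the same route as the paper: the paper likewise transplants the estimates of Theorem \ref{theorem2.1} to the truncated field (obtaining the recurrence $\|u_{n}^{\epsilon,m}\|^{2}\leq \frac{1}{1+\epsilon(\lambda-\lambda^{*})}\bigl(\|u_{n-1}^{\epsilon,m}\|^{2}+\frac{\epsilon}{\lambda-\lambda^{*}}\|f^{m}\|^{2}\bigr)$, hence invariance of $\textit{B}^m_{r^{*}}$ and existence/uniqueness by the same contraction argument), and then concludes by noting that the solution semigroup on $\textit{B}^m_{r^{*}}$ is compact and absorbing in the finite-dimensional space, so the attractor exists as the $\omega$-limit set \eqref{theorem4.1-1}. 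You merely make explicit the boundary-row verifications for $\Lambda_m$ and $D^{-}_{m}$ and the Heine--Borel step that the paper leaves implicit by citing Theorem \ref{theorem2.1}.
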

\begin{proof}
Using the same method as in Theorem \ref{theorem2.1}, we have
\begin{equation*}
\begin{aligned}
\|u_{n}^{\epsilon,m}\|^2 &\leq \frac{1}{1+\epsilon (\lambda-\lambda^{*})} \left( \|u_{n-1}^{\epsilon,m}\|^{2} +\frac{\epsilon}{(\lambda-\lambda^{*})} \|f|^{m}\|^{2}\right)\\
&\leq \frac{1}{1+\epsilon (\lambda-\lambda^{*})} \left( \|u_{n-1}^{\epsilon,m}\|^{2} +\frac{\epsilon}{(\lambda-\lambda^{*})} \|f\|^{2}\right).
\end{aligned}
\end{equation*}
According to Theorem \ref{theorem2.1}, it is known that the truncated system \eqref{4.2} has a unique solution $u_{n}^{\epsilon,m}\left(u_{0}^{\epsilon,m}\right)\in \textit{B}^m_{r^{*}}$ for any $\epsilon\in(0,\epsilon^{*}]$ and $u_{0}^{\epsilon,m}\in\textit{B}^m_{r^{*}}$.  Furthermore, the solution semigroup on $\textit{B}^m_{r^{*}}$ is compact and absorbing in the finite-dimensional space. Thus, the truncated system \eqref{4.2} possesses a unique attractor $\mathcal{A}^{\epsilon}_{m}$ as defined in \eqref{theorem4.1-1}.
\end{proof}

\subsection{Convergence from the truncated numerical attractor to the numerical attractor}
In this subsection, we will prove the convergence of truncated numerical attractor $\mathcal{A}^{\epsilon}_{m}$ as $m\rightarrow\infty$. First, we will prove that the tail of any element in $\mathcal{A}^{\epsilon}_{m}$ becomes uniformly small as $m\rightarrow\infty$.

\begin{lemma}\label{lemma4.1}
Suppose $\epsilon\in(0,\epsilon^{*}]$, for any $\delta>0$, there exists $I_{\delta}\in \mathbb{N}$ such that
\begin{equation}\label{lemma4.1-1}
\sum\limits_{I_{\delta} \leq |i| \leq m}\left| z_{i} \right|^{2} < \delta,\mspace{6mu}\mspace{6mu}\forall z = \left( z_{i} \right)_{|i| \leq m} \in \mathcal{A}_{m}^{\epsilon},~~~\forall m\geq I_{\delta},~m\in \mathbb{N}.
\end{equation}
\end{lemma}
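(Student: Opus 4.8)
The plan is to reproduce the asymptotic tail estimate of Theorem~\ref{theorem3.1} for the truncated system \eqref{4.2}, and then to convert the resulting decay into a uniform statement about the attractor by exploiting invariance. First I would record that $\mathcal{A}_m^{\epsilon}$ is invariant under the discrete solution map $S_\epsilon^m(n)z := u_n^{\epsilon,m}(z)$; since $S_\epsilon^m(n)\mathcal{A}_m^{\epsilon}=\mathcal{A}_m^{\epsilon}$, every $z\in\mathcal{A}_m^{\epsilon}$ admits for each $n\in\mathbb{N}$ a preimage $z_{-n}\in\mathcal{A}_m^{\epsilon}\subseteq \textit{B}^m_{r^{*}}$ with $z=u_n^{\epsilon,m}(z_{-n})$. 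Consequently it suffices to bound the spatial tail of $u_n^{\epsilon,m}(z_0)$ uniformly over $z_0\in \textit{B}^m_{r^{*}}$ and over $m$, and then to let $n\to\infty$ along such backward orbits.

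To obtain that bound I would reuse the cut-off function $\xi_k$ of Theorem~\ref{theorem3.2} and take the inner product of \eqref{4.2} with $\xi_k u_n^{\epsilon,m}$ in $\mathbb{R}^{2m+1}$. Each term is then estimated exactly as in \eqref{3.1-4}--\eqref{3.1-7}: the coupling through $\Lambda_m$ contributes $4\epsilon\nu\sum_{|i|\leq m}\xi_{k,i}(u_{n,i}^{\epsilon,m})^2+\epsilon\nu\frac{c_1}{k}(r^{*})^2$, the convection and reaction terms combine via Young's inequality into $\epsilon\bigl(\frac{(2\alpha+\beta+\beta\gamma)^2}{4\beta}-\beta\gamma\bigr)\sum_{|i|\leq m}\xi_{k,i}(u_{n,i}^{\epsilon,m})^2+\frac{c_1\epsilon\alpha}{3k}(r^{*})^3$, and the forcing yields the tail $\frac{\epsilon}{2(\lambda-\lambda^{*})}\sum_{|i|\leq m}\xi_{k,i}f_i^2$. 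Collecting these and using $\lambda>\lambda^{*}$ produces, for $k$ large, the finite-dimensional analogue of \eqref{3.1-9},
\[
\sum_{|i|\leq m}\xi_{k,i}(u_{n,i}^{\epsilon,m})^2 \leq \frac{1}{1+\epsilon(\lambda-\lambda^{*})}\sum_{|i|\leq m}\xi_{k,i}(u_{n-1,i}^{\epsilon,m})^2 + \frac{\epsilon(\lambda-\lambda^{*})}{1+\epsilon(\lambda-\lambda^{*})}\,\frac{\delta}{2}.
\]

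The decisive point, which I would stress, is that every constant here is independent of $m$: the $1/k$ terms involve only $r^{*}$, and because $f^m$ is the restriction of $f\in\ell^2$ one has $\sum_{|i|\leq m}\xi_{k,i}f_i^2\leq\sum_{|i|\geq k}f_i^2$, the $m$-free tail of a fixed square-summable sequence. Hence I can choose $k_\delta$ with $\frac{c_2}{k}+\frac{1}{2(\lambda-\lambda^{*})}\sum_{|i|\geq k}f_i^2<\frac{\delta(\lambda-\lambda^{*})}{4}$ for all $k\geq k_\delta$ simultaneously in $m$, where $c_2=c_1\nu(r^{*})^2+\frac{c_1\alpha}{3}(r^{*})^3$. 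Iterating the recurrence from $z_{-n}\in \textit{B}^m_{r^{*}}$ gives $\sum_{|i|\leq m}\xi_{k_\delta,i}(u_{n,i}^{\epsilon,m})^2\leq \frac{(r^{*})^2}{[1+\epsilon(\lambda-\lambda^{*})]^{n}}+\frac{\delta}{2}$; letting $n\to\infty$ (valid since $z=u_n^{\epsilon,m}(z_{-n})$ for arbitrarily large $n$) yields $\sum_{|i|\leq m}\xi_{k_\delta,i}z_i^2\leq\frac{\delta}{2}$. Because $\xi_{k_\delta,i}=1$ whenever $|i|\geq 2k_\delta$, setting $I_\delta=2k_\delta$ gives $\sum_{I_\delta\leq|i|\leq m}|z_i|^2<\delta$ for all $z\in\mathcal{A}_m^{\epsilon}$ and all $m\geq I_\delta$.

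I expect the main obstacle to be the two intertwined uniformity issues in the middle step: verifying that the Dirichlet boundary entries of $\Lambda_m$ and $D^{-}_m$ (which differ from the bi-infinite operators only in their corner rows) produce boundary contributions of the correct sign so that they are absorbed rather than generating $m$-dependent errors, and confirming that the forcing tail and the cut-off constants are genuinely independent of $m$. Once the recurrence is established uniformly in $m$, the passage $n\to\infty$ along the backward orbit on the invariant attractor is routine and replaces the absorbing-set argument used for fixed initial data in Theorem~\ref{theorem3.1}.
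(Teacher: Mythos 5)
Your proposal is correct and follows essentially the same route as the paper: the same cut-off/Young's-inequality recurrence, the same $m$-uniform choice of $k_{\delta}$ (via the $m$-free forcing tail $\sum_{|i|\geq k}f_i^2$ and constants depending only on $r^{*}$), and the same use of invariance of $\mathcal{A}_{m}^{\epsilon}$. The only cosmetic difference is that the paper fixes a single time $N_{\delta}$ with $(r^{*})^2\left[1+\epsilon(\lambda-\lambda^{*})\right]^{-N_{\delta}}\leq \delta/2$ and writes $z=u_{N_{\delta}}^{\epsilon,m}(y)$ once, whereas you pass to the limit $n\to\infty$ along backward orbits; the paper also carries out in full the corner-row estimates for $\Lambda_{m}$ and $D^{-}_{m}$ that you correctly flag as the main point to verify.
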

\begin{proof}
Let $\xi_{k}$ be the cut-off function defined in Theorem \ref{theorem3.1}, and define $\xi^m_{k}=\left(\xi_{k,i}\right)_{|i|\leq m}$ as the fintie-dimensional cut-off function. Taking the inner product of \eqref{4.2} with $\xi^m_{k}u_{n}^{\epsilon,m}$ in $\mathbb R^{2m+1}$, we have 
\begin{equation}\label{lemma4.1-2}
\sum_{|i|\leq m}\xi_{k,i}|u_{n,i}^{\epsilon,m}|^{2}=\left(u_{n-1}^{\epsilon,m}, \xi^m_{k}u_{n}^{\epsilon,m}\right) + \epsilon \left( \nu \Lambda_{m} u_{n}^{\epsilon,m}
-\alpha u_{n}^{\epsilon,m} D^{-}_{m} u_{n}^{\epsilon,m} + \beta u_{n}^{\epsilon,m}(1-u_{n}^{\epsilon,m})(u_{n}^{\epsilon,m}-\gamma) -\lambda u_{n}^{\epsilon,m}+f^{m}, \xi^m_{k}u_{n}^{\epsilon,m} \right).
\end{equation}
According to Theorem \ref{theorem4.1}, we know $|u_{n,i}^{\epsilon,m}| \leq \|u_{n}^{\epsilon,m}\| \leq r^{*}$, then
\begin{equation}\label{lemma4.1-3}
\begin{aligned}
\epsilon \nu \left( \Lambda_{m} u_{n}^{\epsilon,m}, \xi_{k}^m u_{n}^{\epsilon,m}\right)
\leq & \frac{5}{2} \epsilon \nu \xi_{k,-m}|u_{n,-m}^{\epsilon,m}|^{2} + \frac{1}{2} \epsilon \nu \xi_{k,-m}|u_{n,-m+1}^{\epsilon,m}|^{2} \\
& + \frac{1}{2}\epsilon \nu \sum_{-m+1 \leq i \leq m-1} \xi_{k,i} 
\left[ |u_{n,i-1}^{\epsilon,m}|^{2} 
+ 6|u_{n,i}^{\epsilon,m}|^{2}+|u_{n,i+1}^{\epsilon,m}|^{2} \right] \\
& + \frac{1}{2} \epsilon \nu \xi_{k,m} |u_{n,m-1}^{\epsilon,m}|^{2} + \frac{3}{2} \epsilon \nu \xi_{k,m}|u_{n,m}^{\epsilon,m}|^{2}.
\end{aligned}
\end{equation}
When $i=-m$, the right-hand side of the inequality \eqref{lemma4.1-3} is as follows:
\begin{equation*}
\begin{aligned}
&\frac{5}{2} \epsilon \nu \xi_{k,-m}|u_{n,-m}^{\epsilon,m}|^{2} + \frac{1}{2} \epsilon \nu \xi_{k,-m+1}|u_{n,-m}^{\epsilon,m}|^{2}\\
= & \epsilon \nu \left( \frac{5}{2} \xi_{k,-m} + \frac{1}{2} \xi_{k,-m+1}\right) |u_{n,-m}^{\epsilon,m}|^{2}\\
\leq & \epsilon \nu \left(3 \xi_{k,-m} + \frac{1}{2} \xi_{k,-m+1} - \frac{1}{2} \xi_{k,-m}\right) |u_{n,-m}^{\epsilon,m}|^{2}\\
\leq & 3\epsilon \nu \xi_{k,-m} |u_{n,-m}^{\epsilon,m}|^{2} + \frac{c_1}{2k}\epsilon \nu |u_{n,-m}^{\epsilon,m}|^{2}.
\end{aligned}
\end{equation*}
When $i=-m+1$, the right-hand side of the inequality \eqref{lemma4.1-3} is expressed as:
\begin{equation*}
\begin{aligned}
&\frac{1}{2} \epsilon \nu \xi_{k,-m}|u_{n,-m+1}^{\epsilon,m}|^{2} + \frac{1}{2} \epsilon \nu \xi_{k,-m+2}|u_{n,-m+1}^{\epsilon,m}|^{2} + 3 \epsilon \nu \xi_{k,-m+1}|u_{n,-m+1}^{\epsilon,m}|^{2}\\
= & \frac{1}{2}\epsilon \nu \left( \xi_{k,-m} + 6 \xi_{k,-m+1} + \xi_{k,-m+2}\right) |u_{n,-m+1}^{\epsilon,m}|^{2}\\
= & \frac{1}{2}\epsilon \nu \left( 8 \xi_{k,-m+1} + \xi_{k,-m} - \xi_{k,-m+1} + \xi_{k,-m+2} - \xi_{k,-m+1}\right) |u_{n,-m+1}^{\epsilon,m}|^{2}\\
\leq & 4\epsilon\nu \xi_{k,-m+1} |u_{n,-m+1}^{\epsilon,m}|^{2} + \frac{c_1}{k}\epsilon\nu |u_{n,-m+1}^{\epsilon,m}|^{2}.
\end{aligned}
\end{equation*}
When $i=-m+2,\cdots,m-2$, the right-hand side of the inequality \eqref{lemma4.1-3} is presented as:
\begin{equation*}
\begin{aligned}
&\frac{1}{2} \epsilon \nu \xi_{k,i+1}|u_{n,i}^{\epsilon,m}|^{2} + \frac{6}{2} \epsilon \nu \xi_{k,i}|u_{n,i}^{\epsilon,m}|^{2} + \frac{1}{2} \epsilon \nu \xi_{k,i-1}|u_{n,i}^{\epsilon,m}|^{2}\\
= & \frac{1}{2} \epsilon \nu \left( 8 \xi_{k,i} + \xi_{k,i+1} -\xi_{k,i} + \xi_{k,i-1}-\xi_{k,i} \right) |u_{n,i}^{\epsilon,m}|^{2}\\
= & 4\epsilon\nu \xi_{k,i}|u_{n,i}^{\epsilon,m}|^{2} + \frac{c_1}{k}\epsilon\nu|u_{n,i}^{\epsilon,m}|^{2}.
\end{aligned}
\end{equation*}
When $i=m-1$, the right-hand side of the inequality \eqref{lemma4.1-3} is shown below:
\begin{equation*}
\begin{aligned}
&\frac{6}{2} \epsilon \nu \xi_{k,m-1}|u_{n,m-1}^{\epsilon,m}|^{2} + \frac{1}{2} \epsilon \nu \xi_{k,m-2}|u_{n,m-1}^{\epsilon,m}|^{2} + \frac{1}{2} \epsilon \nu \xi_{k,m}|u_{n,m-1}^{\epsilon,m}|^{2}\\
= & \frac{1}{2}\epsilon \nu \left( 8\xi_{k,m-1} + \xi_{k,m-2} - \xi_{k,m-1} + \xi_{k,m} - \xi_{k,m-1}\right) |u_{n,m-1}^{\epsilon,m}|^{2}\\
\leq & 4\epsilon \nu \xi_{k,m-1} |u_{n,m-1}^{\epsilon,m}|^{2} + \frac{c_1}{k}\epsilon \nu |u_{n,m-1}^{\epsilon,m}|^{2}.
\end{aligned}
\end{equation*}
When $i=m$, the right-hand side of the inequality \eqref{lemma4.1-3} is stated as:
\begin{equation*}
\frac{1}{2} \epsilon \nu \xi_{k,m-1}|u_{n,m}^{\epsilon,m}|^{2} + \frac{3}{2} \epsilon \nu \xi_{k,m}|u_{n,m}^{\epsilon,m}|^{2} = \frac{1}{2}\epsilon \nu \left( 4\xi_{k,m} + \xi_{k,m-1} - \xi_{k,m} \right) |u_{n,m}^{\epsilon,m}|^{2}
\leq 2\epsilon\nu \xi_{k,m} |u_{n,m}^{\epsilon,m}|^{2} + \frac{c_1}{2k}\epsilon\nu |u_{n,m}^{\epsilon,m}|^{2}.
\end{equation*}
Based on the situation of each component, we have uniformly that
\begin{equation*}
\begin{aligned}
\epsilon \nu \left( \Lambda_{m} u_{n}^{\epsilon,m}, \xi_{k}^m u_{n}^{\epsilon,m}\right)
\leq & 4\epsilon \nu \sum_{|i|\leq m} \xi_{k,i}|u_{n,i}^{\epsilon,m}|^{2}
+ \frac{c_1}{k}\epsilon\nu \sum_{|i|\leq m} |u_{n,i}^{\epsilon,m}|^{2}\\
\leq & 4\epsilon \nu \sum_{|i|\leq m} \xi_{k,i}|u_{n,i}^{\epsilon,m}|^{2}
+ \frac{c_1}{k}\epsilon\nu \left( r^{*}\right)^{2}.
\end{aligned}
\end{equation*}
Using the same method, the following inner product in \eqref{lemma4.1-2} is estimated as follows:
\begin{equation*}
\begin{aligned}
&-\epsilon \alpha \left( u_{n}^{\epsilon,m} D^{-}_{m} u_{n}^{\epsilon,m}, \xi_{k}u_{n}^{\epsilon,m} \right)
= -\epsilon \alpha \sum_{|i|\leq m} \xi_{k,i}|u_{n,i}^{\epsilon,m}|^{2} \left( D^{-}_{m} u_{n}^{\epsilon,m}\right)_{i}\\
= & \epsilon \alpha \sum_{-m+1 \leq i \leq m} \xi_{k,i} \left( |u_{n,i}^{\epsilon,m}|^{2} u_{n,i-1}^{\epsilon,m} - |u_{n,i}^{\epsilon,m}|^{2} u_{n,i}^{\epsilon,m} \right) + \epsilon\alpha \xi_{k,-m} |u_{n,-m}^{\epsilon,m}|^{2} u_{n,-m}^{\epsilon,m}\\
\leq &\epsilon \alpha \sum_{-m+1 \leq i \leq m} \xi_{k,i} \left( \frac{2}{3}|u_{n,i}^{\epsilon,m}|^{3} + \frac{1}{3} |u_{n,i-1}^{\epsilon,m}|^{3} + |u_{n,i}^{\epsilon,m}|^3 \right) + \epsilon\alpha \xi_{k,-m} |u_{n,-m}^{\epsilon,m}|^{3}\\
= &\frac{5}{3}\epsilon \alpha \sum_{-m+1 \leq i \leq m} \xi_{k,i} |u_{n,i}^{\epsilon,m}|^{3} + \frac{1}{3}\epsilon \alpha \sum_{-m+1 \leq i \leq m} \left( \xi_{k,i} - \xi_{k,i-1} + \xi_{k,i-1}\right)|u_{n,i-1}^{\epsilon,m}|^{3} + \epsilon \alpha \xi_{k,-m}|u_{n,-m}^{\epsilon,m}|^{3}\\
\leq & 2\epsilon \alpha \sum_{|i| \leq m} \xi_{k,i}|u_{n,i}^{\epsilon,m}|^{3}+ \frac{c_1}{3k}\epsilon\alpha \sum_{-m+1 \leq i \leq m}|u_{n,i-1}^{\epsilon,m}|^{3}\\
\leq & 2\epsilon \alpha \sum_{|i| \leq m} \xi_{k,i}|u_{n,i}^{\epsilon,m}|^{3}+ \frac{c_1}{3k}\epsilon\alpha \left(r^{*}\right)^{3}.
\end{aligned}
\end{equation*}
Using Young's inequality, the first and the last inner products in \eqref{lemma4.1-2} are estimated as follows:
\begin{equation*}
\left(u_{n-1}^{\epsilon,m}, \xi^m_{k}u_{n}^{\epsilon,m}\right) \leq \frac{1}{2} \sum_{|i|\leq m} \xi_{k,i}\left(u_{n-1,i}^{\epsilon,m}\right)^{2} + \frac{1}{2} \sum_{|i|\leq m} \xi_{k,i}\left(u_{n,i}^{\epsilon,m}\right)^{2},\hspace{1.5cm}
\end{equation*}
\begin{equation*}
\epsilon \left( f^{m}, \xi_{k}^m u_{n}^{\epsilon,m} \right) \leq 
\frac{\epsilon(\lambda-\lambda^{*})}{2}\sum_{|i|\leq m} \xi_{k,i}\left(u_{n,i}^{\epsilon,m}\right)^{2} + \frac{\epsilon}{2(\lambda-\lambda^{*})}\sum_{|i|\leq m} \xi_{k,i} f_{i}^{2}. 
\end{equation*}
The two remaining inner products in \eqref{lemma4.1-2} are represented in component form
\begin{equation*}
\epsilon \beta \left( u_{n}^{\epsilon,m}(1-u_{n}^{\epsilon,m})(u_{n}^{\epsilon,m}-\gamma),  \xi_{k}^m u_{n}^{\epsilon,m} \right) 
=  - \epsilon \beta \sum_{|i|\leq m} \xi_{k,i}\left(u_{n,i}^{\epsilon,m}\right)^{4} + \epsilon \beta (1+\gamma) \sum_{|i|\leq m} \xi_{k,i}\left(u_{n,i}^{\epsilon,m}\right)^{3} - \epsilon \beta \gamma \sum_{|i|\leq m} \xi_{k,i}\left(u_{n,i}^{\epsilon,m}\right)^{2},
\end{equation*}
\begin{equation*}
 - \epsilon \lambda \left( u_{n}^{\epsilon,m}, \xi_{k}^m u_{n}^{\epsilon,m} \right) =  - \epsilon \lambda \sum_{|i|\leq m} \xi_{k,i}\left(u_{n,i}^{\epsilon,m}\right)^{2}.  \hspace{4.5cm}
\end{equation*}
Based on the above derivation, we have the following estimate for \eqref{lemma4.1-2} according to the notation in \eqref{2.1-12}
\begin{equation*}
\begin{aligned}
&\sum_{|i|\leq m}\xi_{k,i}|u_{n,i}^{\epsilon,m}|^{2} \leq \frac{1}{2} \sum_{|i|\leq m} \xi_{k,i} |u_{n-1,i}^{\epsilon,m}|^{2} + \frac{1}{2} \sum_{|i|\leq m} \xi_{k,i} |u_{n,i}^{\epsilon,m}|^{2} 
+  4\epsilon \nu \sum_{|i|\leq m} \xi_{k,i}|u_{n,i}^{\epsilon,m}|^{2}
+ \frac{c_1}{k}\epsilon\nu \left( r^{*}\right)^{2} + \frac{c_1}{3k}\epsilon\alpha \left(r^{*}\right)^{3}\\
&\hspace{2.3cm} + \epsilon \left( 2\alpha + \beta + \beta\gamma \right) \sum_{|i| \leq m} \xi_{k,i} |u_{n,i}^{\epsilon,m}|^{3} - \epsilon\beta \sum_{|i| \leq m} \xi_{k,i} |u_{n,i}^{\epsilon,m}|^{4} - \epsilon \beta\gamma \sum_{|i| \leq m} \xi_{k,i} |u_{n,i}^{\epsilon,m}|^{2} \\
&\hspace{2.3cm} - \epsilon \lambda \sum_{|i| \leq m} \xi_{k,i} |u_{n,i}^{\epsilon,m}|^{2}  + \frac{\epsilon(\lambda-\lambda^{*})}{2} \sum_{|i| \leq m} \xi_{k,i} |u_{n,i}^{\epsilon,m}|^{2} + \frac{\epsilon}{2(\lambda-\lambda^{*})} \sum_{|i| \leq m} \xi_{k,i} f_{i}^{2}\\
&\hspace{2cm} \leq \frac{1}{2} \sum_{|i|\leq m} \xi_{k,i} |u_{n-1,i}^{\epsilon,m}|^{2} 
+ \frac{\epsilon}{2(\lambda-\lambda^{*})} \sum_{|i| \leq m} \xi_{k,i} f_{i}^{2} + \epsilon\frac{c_3}{2k}\\
&\hspace{2.3cm}  + \left[ \frac{1}{2} + 4\epsilon \nu - \epsilon\beta\gamma - \epsilon\lambda +  \frac{\epsilon(\lambda-\lambda^{*})}{2} 
+\epsilon\frac{(2\alpha + \beta + \beta\gamma)^{2}}{4\beta} \right] \sum_{|i|\leq m} \xi_{k,i} |u_{n,i}^{\epsilon,m}|^{2} \\
& \hspace{2cm}= \frac{1}{2} \sum_{|i|\leq m} \xi_{k,i} |u_{n-1,i}^{\epsilon,m}|^{2}+ \frac{1-\epsilon(\lambda-\lambda^{*})}{2}\sum_{|i|\leq m} \xi_{k,i} |u_{n,i}^{\epsilon,m}|^{2} + \epsilon \left[ \frac{c_3}{2k} + \frac{1}{2(\lambda-\lambda^{*})} \sum_{|i| \geq k} f_{i}^{2} \right],
\end{aligned}
\end{equation*}
here $c_3=2c_1 \nu\left( r^{*}\right)^2+\frac{2}{3}c_1 \alpha\left( r^{*}\right)^3$. The above inequality is transformed into 
\begin{equation*}
\sum_{|i|\leq m}\xi_{k,i}|u_{n,i}^{\epsilon,m}|^{2} \leq \frac{1}{1 + \epsilon(\lambda - \lambda^{*})} \left[ \sum_{|i|\leq m} \xi_{k,i} |u_{n-1,i}^{\epsilon,m}|^{2} 
 + \epsilon\left(\frac{c_3}{k} + \frac{1}{\lambda-\lambda^{*}} \sum_{|i| \geq k} f_{i}^{2}\right)\right]. 
\end{equation*}
Thus, there exists a constant $k_{\delta}>0$ such that
\begin{equation*}
\frac{c_3}{k} + \frac{1}{\lambda-\lambda^{*}} \sum_{|i| \geq k} f_{i}^{2} < \frac{\delta(\lambda-\lambda^{*})}{2},~~~\forall k\geq k_{\delta},    \hspace{1cm}
\end{equation*}
and then
\begin{equation}\label{lemma4.1-4}
\sum_{|i|\leq m}\xi_{k,i}|u_{n,i}^{\epsilon,m}|^{2} \leq \frac{1}{1 + \epsilon(\lambda - \lambda^{*})} \sum_{|i|\leq m} \xi_{k,i} |u_{n-1,i}^{\epsilon,m}|^{2} + \frac{\epsilon (\lambda-\lambda^{*})}{1+(\lambda-\lambda^{*})} \frac{\delta}{2},~~~\forall k\geq k_{\delta}.
\end{equation}
By recursively deriving the inequality \eqref{lemma4.1-4},
for any $k\geq k_{\delta}$ and $u_{0}^{\epsilon,m}\in \textit{B}^m_{r^{*}}$, we have
\begin{equation*}
\sum_{|i|\leq m}\xi_{k,i}|u_{n,i}^{\epsilon,m}|^{2} \leq \frac{1}{\left[1 + \epsilon(\lambda - \lambda^{*})\right]^{n}} \sum_{|i|\leq m} \xi_{k,i} |u_{0,i}^{\epsilon,m}|^{2} + \frac{\delta}{2}\leq \frac{\left(r^{*}\right)^{2}}{\left[1 + \epsilon(\lambda - \lambda^{*})\right]^{n}} + \frac{\delta}{2}.
\end{equation*}
Hence, there exists $N_{\delta}$, we obtain as follows:
\begin{equation*}
\sum_{|i|\leq m} \xi_{k,i} |u_{n,i}^{\epsilon,m}|^{2}<\delta,~~~\forall k\geq k_{\delta}~ and~ n\geq N_{\delta}.
\end{equation*}
Specially, for $I_{\delta}=2k_{\delta}$, we get 
\begin{equation*}
\sum_{I_{\delta}\leq |i|\leq m} |u_{N_{\delta},i}^{\epsilon,m}(y)|^{2} < \delta, ~~~\forall y\in \textit{B}^m_{r^{*}}.
\end{equation*}
For any $\epsilon\in(0,\epsilon^{*}]$ and $m\in \mathds{N}$, suppose $z\in \mathcal{A}_{m}^{\epsilon}$. Then for any $y\in \mathcal{A}_{m}^{\epsilon}\subset \textit{B}^m_{r^{*}}$, by the invariance property, we obtain $z=u_{N_{\delta}}^{\epsilon,m}(y)$. Therefore
\begin{equation*}
\sum_{I_{\delta}\leq |i|\leq m} z_{i}^{2} = \sum_{I_{\delta}\leq |i|\leq m}|u_{N_{\delta},i}^{\epsilon,m}(y)|^{2} < \delta.
\end{equation*}
The lemma is proven.
\end{proof}

The next lemma follows from \cite{Temam-1997}.
\begin{lemma}\label{lemma4.2}
$z\in \mathcal{A}^{\epsilon}_{m}$ if and only if there is a bounded solution $u^{m}_{n}$ 
of the truncated system \eqref{4.2}
such that $u_{0}^{m}=z$, while $y\in \mathcal{A}^{\epsilon}$ if and only if there is a bounded solution $u_{n}$ of the IES \eqref{2.2} in $\ell^2$ such that $u_{0}=y$.
\end{lemma}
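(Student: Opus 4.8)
The plan is to prove both equivalences simultaneously, since each attractor is the $\omega$-limit set of an absorbing ball for a discrete dynamical system that is invariant on its attractor, and the assertion is the standard characterization (following \cite{Temam-1997}) of an attractor as the union of its bounded two-sided orbits. I would treat the truncated scheme \eqref{4.2} on $B^m_{r^*}$ (with solution operator $u^{\epsilon,m}_n$) and the infinite-dimensional IES \eqref{2.2} on $B_{r^*}$ (with semigroup $S_\epsilon(n)$) in parallel, calling a \emph{bounded solution} any two-sided sequence $\{u_n\}_{n\in\mathbb{Z}}$ that satisfies the scheme for every $n\in\mathbb{Z}$ and lies in a bounded set.

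For the ``if'' direction, I would suppose $\{u_n\}_{n\in\mathbb{Z}}$ is a bounded solution of \eqref{2.2} with $u_0=y$, say $\{u_n\}\subset B_R$. Since $B_{r^*}$ is absorbing for the discrete system (Theorem \ref{theorem2.1}, Theorem \ref{theorem3.1}), there is an $N_0$ with $S_\epsilon(n)B_R\subseteq B_{r^*}$ for all $n\geq N_0$. Fixing $N\in\mathbb{N}$ and choosing $k\geq N+N_0$, the inclusion $u_{-k}\in B_R$ gives $u_{-k+N_0}=S_\epsilon(N_0)u_{-k}\in B_{r^*}$, whence the semigroup property yields $y=u_0=S_\epsilon(k-N_0)u_{-k+N_0}\in\bigcup_{n\geq N}S_\epsilon(n)B_{r^*}$. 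As $N$ is arbitrary this places $y$ in $\bigcap_{N}\overline{\bigcup_{n\geq N}S_\epsilon(n)B_{r^*}}=\mathcal{A}^\epsilon$, and the identical computation with $u^{\epsilon,m}_n$ and $B^m_{r^*}$ gives $z\in\mathcal{A}^{\epsilon}_{m}$.

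For the ``only if'' direction, I would take $y\in\mathcal{A}^\epsilon$ and build a complete orbit through it. The forward part $u_n=S_\epsilon(n)y$, $n\geq0$, stays in $\mathcal{A}^\epsilon$ by invariance $S_\epsilon(n)\mathcal{A}^\epsilon=\mathcal{A}^\epsilon$. For the backward part, the same invariance says $S_\epsilon(1)$ maps $\mathcal{A}^\epsilon$ onto $\mathcal{A}^\epsilon$, so I can select $u_{-1}\in\mathcal{A}^\epsilon$ with $S_\epsilon(1)u_{-1}=y$ and, inductively, $u_{-n-1}\in\mathcal{A}^\epsilon$ with $S_\epsilon(1)u_{-n-1}=u_{-n}$. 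This yields a two-sided solution $\{u_n\}_{n\in\mathbb{Z}}\subset\mathcal{A}^\epsilon$ of \eqref{2.2} with $u_0=y$, and since $\mathcal{A}^\epsilon$ is compact it is bounded; the truncated case is verbatim, using the invariance of $\mathcal{A}^{\epsilon}_{m}$ from Theorem \ref{theorem4.1}.

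The only delicate point I anticipate is this backward construction: it rests entirely on surjectivity of $S_\epsilon(1)$ on $\mathcal{A}^\epsilon$, which is exactly the invariance already established, together with a choice of preimage at each step (no uniqueness is needed). Boundedness of the resulting orbit is then automatic from compactness of the attractor, and the ``if'' direction requires nothing beyond the absorbing property, so no new estimates enter.
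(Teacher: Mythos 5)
The paper gives no proof of this lemma at all --- it simply defers to \cite{Temam-1997} --- so there is nothing in-paper to compare against; what you have written is exactly the standard characterization argument that this citation points to, and its overall structure (forward orbit by invariance plus backward orbit by surjectivity-from-invariance and compactness in one direction; absorption in the other) is correct. One step, however, is not justified by the results you cite: you claim $S_{\epsilon}(n)B_{R}\subseteq B_{r^{*}}$ for $n\geq N_{0}$ as an ``absorbing'' property from Theorems \ref{theorem2.1} and \ref{theorem3.1}, but those results only construct the discrete dynamics on $B_{r^{*}}$ itself (both the contraction-mapping solvability and the attraction statement are confined to $B_{r^{*}}$); the paper never defines $S_{\epsilon}$ on a larger ball, nor proves that the discrete system absorbs arbitrary bounded sets, so for a hypothetical bounded solution with $R>r^{*}$ your citation does not apply. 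The gap is easy to close without new estimates: inequality \eqref{2.1-6}, whose derivation uses only inner products and Young's inequality, holds for \emph{any} pair $(u^{\epsilon}_{n-1},u^{\epsilon}_{n})$ satisfying the scheme, wherever it lies; iterating it along the given bounded solution yields
\begin{equation*}
\|u_{0}\|^{2}\leq \frac{R^{2}}{\left[1+\epsilon(\lambda-\lambda^{*})\right]^{k}}+\frac{\|f\|^{2}}{(\lambda-\lambda^{*})^{2}},\qquad \forall k\in\mathds{N},
\end{equation*}
so letting $k\rightarrow\infty$ shows that every $u_{n}$ of a bounded complete solution automatically lies in $B_{r^{*}}$ (indeed in the ball of radius $\|f\|/(\lambda-\lambda^{*})$). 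With that, the uniqueness assertion of Theorem \ref{theorem2.1} identifies $u_{n+1}$ with $S_{\epsilon}(1)u_{n}$, and your inclusion $y=S_{\epsilon}(k)u_{-k}\in\bigcup_{n\geq N}S_{\epsilon}(n)B_{r^{*}}$ goes through with $N_{0}=0$; the same patch works verbatim for the truncated system \eqref{4.2}. The ``only if'' direction is fine as written.
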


To prove the convergence result of $\mathcal{A}^{\epsilon}_{m}$ under the Hausdorff semi-distance, observe that any element in $\mathbb R^{2m+1}$ can be naturally extended to an element in $\ell^2$. The null-expansion of a point $z\in \mathbb R^{2m+1}$ can be defined as follows:
\begin{equation*}
\tilde{z}_{i}=0,~~~\forall~ |i|>m;~~\tilde{z}_{i}=z_{i},~~\forall |i|\leq m.
\end{equation*}
From this viewpoint, the truncated attractor $\mathcal{A}_{m}^{\epsilon}$ naturally extends to a null-expansion $\widetilde{\mathcal{A}_{m}^{\epsilon}}\subset \ell^2$.

\begin{theorem}\label{theorem4.2}
Suppose $\epsilon\in(0,\epsilon^{*}]$. As $m\rightarrow \infty$, the truncated attractor $\mathcal{A}^{\epsilon}_{m}$ in \eqref{theorem4.1-1} is  upper semi-convergence to the numerical attractor
$\mathcal{A}^{\epsilon}$ of the IES \eqref{2.2} under the Hausdorff semi-distance, i.e., 
\begin{equation}\label{theorem4.2-1}
\lim\limits_{m\rightarrow\infty}\text{dist}_{\ell^2}\left( \mathcal{A}_{m}^{\epsilon},\mathcal{A}^{\epsilon} \right) = \lim\limits_{m\rightarrow\infty}\text{dist}_{\ell^2}\left( \widetilde{\mathcal{A}_{m}^{\epsilon}},\mathcal{A}^{\epsilon} \right) = 0.
\end{equation}
\end{theorem}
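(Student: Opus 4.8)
The plan is to argue by contradiction, following the precompactness-plus-limiting-trajectory scheme standard for lattice systems. The first equality in \eqref{theorem4.2-1} is immediate: the null-expansion $z\mapsto\tilde z$ is an isometry from $\mathbb R^{2m+1}$ into $\ell^2$, so $\|z\|=\|\tilde z\|$ and the Hausdorff semi-distance is unchanged under this identification. Thus it suffices to prove $\lim_{m\to\infty}\mathrm{dist}_{\ell^2}(\widetilde{\mathcal A_m^{\epsilon}},\mathcal A^{\epsilon})=0$.

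Suppose this fails. Then there exist $\eta_0>0$, a subsequence $m_k\to\infty$, and points $z^{m_k}\in\mathcal A_{m_k}^{\epsilon}$ with $d_{\ell^2}(\tilde z^{m_k},\mathcal A^{\epsilon})\ge\eta_0$. Since $\mathcal A_{m_k}^{\epsilon}\subset \textit{B}^{m_k}_{r^{*}}$, the null-expansions $\tilde z^{m_k}$ are bounded in $\ell^2$ by $r^{*}$, and by Lemma \ref{lemma4.1} they have uniformly small tails. Boundedness together with uniform tail decay yields precompactness in $\ell^2$, so along a further subsequence (not relabeled) $\tilde z^{m_k}\to z^{*}$ strongly in $\ell^2$, with $\|z^{*}\|\le r^{*}$.

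The crux is to show $z^{*}\in\mathcal A^{\epsilon}$. By Lemma \ref{lemma4.2}, each $z^{m_k}$ lies on a bounded complete solution $(u_n^{m_k})_{n\in\mathbb Z}$ of the truncated system \eqref{4.2} with $u_0^{m_k}=z^{m_k}$; by invariance $u_n^{m_k}\in\mathcal A_{m_k}^{\epsilon}$ for every $n$, so the bound $\|u_n^{m_k}\|\le r^{*}$ and the uniform tail estimate of Lemma \ref{lemma4.1} hold simultaneously at every time level $n$. Fixing $n$ and reusing the precompactness argument, then diagonalizing over $n\in\mathbb Z$, I extract a single subsequence along which $\tilde u_n^{m_k}\to u_n^{*}$ strongly in $\ell^2$ for all $n\in\mathbb Z$, with $u_0^{*}=z^{*}$ and $\|u_n^{*}\|\le r^{*}$. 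It remains to pass to the limit in the truncated difference equation. Writing \eqref{4.2} componentwise, for each fixed index $i$ and all $m_k$ large enough that $i$ is an interior node, the $i$-th component of $F_{m_k}u_n^{m_k}$ coincides with the $i$-th component of $F\tilde u_n^{m_k}$; since that component depends only on $(\tilde u_{n,i-1}^{m_k},\tilde u_{n,i}^{m_k},\tilde u_{n,i+1}^{m_k})$ together with the pointwise reaction nonlinearity, strong $\ell^2$ convergence gives $(F_{m_k}u_n^{m_k})_i\to(Fu_n^{*})_i$. Passing to the limit yields $u_{n,i}^{*}=u_{n-1,i}^{*}+\epsilon(Fu_n^{*})_i$ for every $i\in\mathbb Z$, i.e. $(u_n^{*})_{n\in\mathbb Z}$ is a bounded complete solution of the IES \eqref{2.2}. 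By Lemma \ref{lemma4.2}, $z^{*}=u_0^{*}\in\mathcal A^{\epsilon}$, contradicting $d_{\ell^2}(z^{*},\mathcal A^{\epsilon})\ge\eta_0>0$.

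The main obstacle is this limiting step: one must confirm that the Dirichlet boundary terms in the truncated operators $\Lambda_{m_k}$ and $D^{-}_{m_k}$ do not persist in the limit and that the full untruncated action of $F$ on $u_n^{*}$ is recovered from componentwise limits. This is controlled precisely because every fixed index becomes interior once $m_k$ is large (so the boundary corrections disappear), and because the uniform tail estimate of Lemma \ref{lemma4.1} prevents mass from escaping to infinity, ensuring that the strong $\ell^2$ limit genuinely solves the infinite-dimensional IES rather than a lossy reduction of it.
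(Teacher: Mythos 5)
Your proposal is correct and follows essentially the same route as the paper's own proof: a contradiction argument combining Lemma \ref{lemma4.2} (characterization of attractor points via bounded complete solutions), the uniform tail estimate of Lemma \ref{lemma4.1} for precompactness of the null-expansions, a diagonal extraction over $n\in\mathds{Z}$, and componentwise passage to the limit in the truncated equation (where each fixed index becomes interior for large $m_k$) to conclude the limit is a bounded complete solution of the IES \eqref{2.2}. Your explicit treatment of the isometry giving the first equality in \eqref{theorem4.2-1} and of the disappearance of the Dirichlet boundary corrections only makes explicit what the paper leaves implicit.
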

\begin{proof}  
We prove this theorem by contradiction. Assuming that \eqref{theorem4.2-1} is false, there exist a constant $\eta_{0}>0$, a subsequence $\{m_{k}\}$ and $z^{m_{k}}\in \mathcal{A}^{\epsilon}_{m_{k}}$ such that
\begin{equation}\label{theorem4.2-2}
\text{d}\left(z^{m_k}, \mathcal{A}^{\epsilon}\right) = \text{d}\left(\widetilde{z^{m_k}}, \mathcal{A}^{\epsilon}\right)\geq \eta_{0},~~~\forall k\in \mathds{N}.
\end{equation}
Because $z^{m_{k}}\in \mathcal{A}^{\epsilon}_{m_{k}}$, the unique solution $u_{n}^{\epsilon,m_{k}}=u_{n}^{\epsilon,m_{k}}(z^{m_{k}})$ belongs to $\mathcal{A}^{\epsilon}_{m_{k}}$ by Lemma \ref{lemma4.2}. The null-expansion of $u_{n}^{\epsilon,m_{k}}$ is denoted by $\widetilde{u_{n}^{\epsilon,m_k}}$. By Lemma \ref{lemma4.1}, for any $\delta>0$, there exists $I_{\delta}\in \mathds{N}$ such that
\begin{equation*}
\sum\limits_{|i| \geq I(\delta)}\left| \widetilde{u_{n,i}^{\epsilon,m_k}} \right|^{2} 
=\sum\limits_{I(\delta) \leq |i| \leq m_k}\left| u_{n,i}^{\epsilon,m_{k}}\right|^{2} < \delta
,~~~\forall n\in \mathds{Z},~k\in \mathbb{N}.
\end{equation*}
According to Theorem \ref{theorem4.1}, the attractor is contained within a ball of radius $r^{*}$. Therefore, the unique solution sequence $\left(u_{n,i}^{\epsilon,m_{k}}\right)_{|i|<I_{\delta}}$ is bounded in $\mathbb R^{2I_{\delta}-1}$ and the null-expansion $\widetilde{u_{n}^{\epsilon,m_k}}$ is relatively compact in $\ell^2$. There exists a subsequence of $\{k\}$ (for convenience, we still denote it as $k$) and $u_{n}^{*}\in \ell^2$ such that 
\begin{equation}\label{theorem4.2-3}
\lim\limits_{k\rightarrow\infty}\|\widetilde{u_{n}^{\epsilon,m_k}}- u_{n}^{*}\| = 0,~~~\forall n\in \mathds{Z}.
\end{equation}
In the following, we will prove that $u_{n}^{*}$ is the solution of the IES \eqref{2.2}. Because $u_{n}^{\epsilon,m_{k}}$ is the solution of the truncated system \eqref{4.2}, for any $ n\in \mathds{Z}$ and $k\in \mathds{N}$ we have 
\begin{equation*}
u_{n}^{\epsilon,m_{k}} = u_{n-1}^{\epsilon,m_{k}} + \epsilon \left( \nu \Lambda_{m_{k}} u_{n}^{\epsilon,m_{k}}
-\alpha u_{n}^{\epsilon,m_{k}} D^{-}_{m_{k}} u_{n}^{\epsilon,m_{k}} + \beta u_{n}^{\epsilon,m_{k}}(1-u_{n}^{\epsilon,m_{k}})(u_{n}^{\epsilon,m_{k}}-\gamma) -\lambda u_{n}^{\epsilon,m_{k}}+f^{m_{k}} \right).
\end{equation*}
According to the definition of $m_{k}$-truncation operator, for each component $i\in \mathds{Z}$ as $m_{k}\rightarrow \infty$, we have
\begin{equation*}
\begin{aligned}
&\left( \Lambda_{m_{k}} u_{n}^{\epsilon,m_{k}} \right)_{i} = \left( \Lambda \widetilde{u_{n}^{\epsilon,m_{k}}} \right)_{i},~~~~\left( u_{n}^{\epsilon,m_{k}} D^{-}_{m_{k}} u_{n}^{\epsilon,m_{k}} \right)_{i} = \widetilde{u}_{n,i}^{\epsilon,m_{k}} \left(D^{-} \widetilde{u_{n}^{\epsilon,m_{k}}} \right)_{i},\\
&   u_{n,i}^{\epsilon,m_{k}}(1-u_{n,i}^{\epsilon,m_{k}})(u_{n,i}^{\epsilon,m_{k}}-\gamma) =  \widetilde{u}_{n,i}^{\epsilon,m_{k}}(1-\widetilde{u}_{n,i}^{\epsilon,m_{k}})(\widetilde{u}_{n,i}^{\epsilon,m_{k}}-\gamma) ,
~~~~u_{n,i}^{\epsilon,m_{k}} = \widetilde{u}_{n,i}^{\epsilon,m_{k}},
~~~~\left( f^{m_{k}} \right)_{i} = f_{i}, ~~~\forall n\in \mathds{Z},
\end{aligned}
\end{equation*}
here $ \widetilde{u}_{n,i}^{\epsilon,m_{k}} $ represents the $i$-component of the null-expansion $\widetilde{u_{n}^{\epsilon,m_{k}}}$. As a result, it satisfies
\begin{equation}\label{theorem4.2-4}
\widetilde{u}_{n,i}^{\epsilon,m_{k}} = \widetilde{u}_{n-1,i}^{\epsilon,m_{k}} + \epsilon \left[ \nu \left(\Lambda \widetilde{u_{n}^{\epsilon,m_{k}}}\right)_{i}
-\alpha \widetilde{u}_{n,i}^{\epsilon,m_{k}}\left( 
 D^{-} \widetilde{u_{n}^{\epsilon,m_{k}}} \right)_{i}+ \beta \widetilde{u}_{n,i}^{\epsilon,m_{k}}(1-\widetilde{u}_{n,i}^{\epsilon,m_{k}})(\widetilde{u}_{n,i}^{\epsilon,m_{k}}-\gamma) -\lambda \widetilde{u}_{n,i}^{\epsilon,m_{k}}+f_{i} \right].
\end{equation}
As $k\rightarrow \infty$ (with $m_{k}\rightarrow\infty$ as well), from \eqref{theorem4.2-3} we have
\begin{equation*}
u_{n,i}^{*} = u_{n-1,i}^{*} + \epsilon \left[ \nu \left(\Lambda u_{n}^{*}\right)_{i}
-\alpha u_{n,i}^{*} \left(D^{-} u_{n}^{*} \right)_{i}+ \beta u_{n,i}^{*} (1-u_{n,i}^{*} )(u_{n,i}^{*} -\gamma) -\lambda u_{n,i}^{*} + f_{i}\right], ~~~\forall i,~n\in \mathds{Z}.
\end{equation*}
This indicates that $u_{n}^{*}$ is the solution of the IES \eqref{2.2}. According to Theorem \ref{4.1}, it follows that $\widetilde{u_{n}^{\epsilon,m_{k}}} \in  \textit{B}_{r^{*}}$. From \eqref{theorem4.2-3}, we get
\begin{equation*}
\|u_{n}^{*}\| = \lim_{k\rightarrow \infty}\| \widetilde{u_{n}^{\epsilon,m_{k}}} \|\leq r^{*},~~~\forall n\in \mathds{Z},
\end{equation*}
which implies that the solution $u_{n}^{*}$ of the IES \eqref{2.2} is bounded. According to Lemma \ref{lemma4.2}, It follows that $u_{0}^{*}\in \mathcal{A}^{\epsilon}$ and
\begin{equation*}
\lim_{k\rightarrow \infty} \widetilde{z^{m_{k}}} = \lim_{k\rightarrow \infty}  \widetilde{u_{0}^{\epsilon,m_{k}}(z^{m_{k}})} = u_{0}^{*}.
\end{equation*}
This contradicts the assumption \eqref{theorem4.2-2}, and the theorem is proved.  
\end{proof}

\subsection{Bounds and continuity properties of numerical attractors}
In this part, We will derive the bounds and continuity properties of the numerical attractor $\mathcal{A}^{\epsilon}$ and the truncated attractor $\mathcal{A}^{\epsilon}_{m}$ with respect to the Hausdorff semi-distance and norm:
\begin{equation*}
\rho(A,B)=\max \{d(A,B),~d(B,A)\},~~\|A\|=\rho(A,\{0\}),~~\forall A,~B\subset \ell^2~\text{or}~\mathbb R^{2m+1}.
\end{equation*}
Two attractors will be represented as $\mathcal{A}^{\epsilon}(f, \lambda)$ and $\mathcal{A}_{m}^{\epsilon}(f, \lambda)$, which depend on the external force $f \in \ell^2$ and the damping constant $\lambda > \lambda^{*}$.
\begin{theorem}\label{theorem4.3}
(i) For any $\epsilon\in(0,\epsilon^{*}]$ and $m\in \mathds{N}$, the following upper bounds hold with respect to the two attractors:
\begin{equation}\label{theorem4.3-1}
\|\mathcal{A}^{\epsilon}(f, \lambda)\|\leq \frac{\|f\|}{\lambda-\lambda^{*}},~~~\|\mathcal{A}_{m}^{\epsilon}(f, \lambda)\|\leq \frac{\|f^{m}\|}{\lambda-\lambda^{*}}.
\end{equation}
In particular, if $f=0$, both attractors are reduced to a point, i.e.,
\begin{equation}\label{theorem4.3-3}
\mathcal{A}^{\epsilon}(0, \lambda) = \mathcal{A}_{m}^{\epsilon}(0, \lambda) = \{0\}.\hspace{1.5cm}
\end{equation}
(ii) The following continuity properties hold with respect to the two attractors:
\begin{equation}\label{theorem4.3-2}
\begin{aligned}
&\lim_{f\rightarrow 0}\rho\left(\mathcal{A}^{\epsilon}(f, \lambda),\{0\}\right) = 0,~~~ \lim_{f\rightarrow 0}\rho\left(\mathcal{A}_{m}^{\epsilon}(f, \lambda),\{0\}\right) = 0,\\
&\lim_{\lambda\rightarrow +\infty}\rho\left(\mathcal{A}^{\epsilon}(f, \lambda),\{0\}\right) = 0,~~~ \lim_{\lambda\rightarrow +\infty}\rho\left(\mathcal{A}_{m}^{\epsilon}(f, \lambda),\{0\}\right) = 0.
\end{aligned}
\end{equation}
\end{theorem}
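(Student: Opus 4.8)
The plan is to prove part (i) first, since part (ii) will then follow at once from the explicit bounds. The whole argument rests on sharpening the a priori estimate already obtained inside the proofs of Theorems \ref{theorem2.1} and \ref{theorem4.1}, and then reading off attractor points as complete bounded orbits via Lemma \ref{lemma4.2}. First note that, by the definition of $\rho$, one has $d(\textit{A},\{0\})=\sup_{a\in \textit{A}}\|a\|$ and $d(\{0\},\textit{A})=\inf_{a\in \textit{A}}\|a\|$, so $\|\textit{A}\|=\rho(\textit{A},\{0\})=\sup_{a\in \textit{A}}\|a\|$; thus it suffices to bound the norm of an arbitrary element of each attractor.

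For the infinite-dimensional case I would fix $z\in\mathcal{A}^{\epsilon}$ and use Lemma \ref{lemma4.2} to obtain a complete bounded solution $(u_n)_{n\in\mathbb{Z}}\subset\mathcal{A}^{\epsilon}\subset\textit{B}_{r^{*}}$ with $u_0=z$, so that the recursion \eqref{2.1-6} holds for every $n\in\mathbb{Z}$. Writing $q=\frac{1}{1+\epsilon(\lambda-\lambda^{*})}\in(0,1)$ and iterating \eqref{2.1-6} backward from $n=-N$ up to $n=0$, I obtain
\begin{equation*}
\|z\|^{2}\leq q^{N}\|u_{-N}\|^{2}+\frac{\|f\|^{2}}{(\lambda-\lambda^{*})^{2}}\bigl(1-q^{N}\bigr),
\end{equation*}
where the limiting constant comes from summing the geometric series and using $1-q=\epsilon(\lambda-\lambda^{*})q$. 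Since the whole orbit lies in $\textit{B}_{r^{*}}$, the leading term satisfies $q^{N}\|u_{-N}\|^{2}\leq q^{N}(r^{*})^{2}\to0$ as $N\to\infty$, hence $\|z\|\leq\|f\|/(\lambda-\lambda^{*})$. As $z$ was arbitrary this yields the first bound in \eqref{theorem4.3-1}. The truncated bound is proved identically, starting from the analogous recursion in the proof of Theorem \ref{theorem4.1} with $f$ replaced by $f^{m}$ and invoking Lemma \ref{lemma4.2} for $\mathcal{A}^{\epsilon}_{m}$, giving $\|\mathcal{A}^{\epsilon}_{m}\|\leq\|f^{m}\|/(\lambda-\lambda^{*})$. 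For \eqref{theorem4.3-3}, putting $f=0$ forces $\|\mathcal{A}^{\epsilon}(0,\lambda)\|\leq0$, so $\mathcal{A}^{\epsilon}(0,\lambda)\subseteq\{0\}$; since $u_n\equiv0$ is a bounded complete solution when $f=0$ (as $Fu|_{u=0}=f=0$), Lemma \ref{lemma4.2} gives $0\in\mathcal{A}^{\epsilon}$, whence equality, and the same reasoning applies to $\mathcal{A}^{\epsilon}_{m}(0,\lambda)$.

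Part (ii) is then immediate from these estimates. By the definition of the norm, $\rho(\mathcal{A}^{\epsilon}(f,\lambda),\{0\})=\|\mathcal{A}^{\epsilon}(f,\lambda)\|\leq\|f\|/(\lambda-\lambda^{*})$ and $\rho(\mathcal{A}^{\epsilon}_{m}(f,\lambda),\{0\})\leq\|f^{m}\|/(\lambda-\lambda^{*})\leq\|f\|/(\lambda-\lambda^{*})$. Letting $f\to0$ with $\lambda$ fixed drives both right-hand sides to zero, which gives the first line of \eqref{theorem4.3-2}; letting $\lambda\to+\infty$ with $f$ fixed, and observing that $\lambda^{*}$ in \eqref{2.1-12} does not depend on $\lambda$, drives $\|f\|/(\lambda-\lambda^{*})\to0$, which gives the second line. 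Because the bound is uniform in $\epsilon\in(0,\epsilon^{*}]$, it holds for every admissible parameter pair and no complication arises from the dependence of $\epsilon^{*}$ on the parameters.

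The only genuinely substantive step is the passage from the crude invariant inclusion $\mathcal{A}^{\epsilon}\subset\textit{B}_{r^{*}}$ to the sharp value $\|f\|/(\lambda-\lambda^{*})$ (which is strictly smaller than $r^{*}$). This is precisely where iterating \eqref{2.1-6} \emph{backward} along a complete bounded orbit supplied by Lemma \ref{lemma4.2}, together with the exact geometric-series constant, is essential; everything else reduces to a direct substitution or the limit of an explicit elementary bound.
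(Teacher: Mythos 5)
Your proposal is correct, but it follows a genuinely different route from the paper's. The paper fixes an arbitrary radius $r_0$ with $\|f^{m}\|/(\lambda-\lambda^{*})<r_0\leq r^{*,m}$, re-derives the one-step energy recursion for the truncated system \eqref{4.2}, iterates it \emph{forward} from arbitrary initial data to show that the ball $B^{m}_{r_0}$ is absorbing, invokes the fact that an attractor is contained in every absorbing set, and finally lets $r_0\downarrow \|f^{m}\|/(\lambda-\lambda^{*})$; the infinite-dimensional bound is stated to follow by the same method, and the $f=0$ case and part (ii) are declared direct consequences of (i). You instead work pointwise on the attractor: Lemma \ref{lemma4.2} supplies a complete bounded orbit through any $z\in\mathcal{A}^{\epsilon}$, and iterating the already-established recursion \eqref{2.1-6} \emph{backward} along that orbit, with the exact identity $1-q=\epsilon(\lambda-\lambda^{*})q$ for the geometric sum, gives $\|z\|^{2}\leq q^{N}(r^{*})^{2}+\|f\|^{2}/(\lambda-\lambda^{*})^{2}\,(1-q^{N})$ and hence the sharp bound in a single pass. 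Your route buys three things: no need to verify absorption for a continuum of radii, no final shrinking-radius limit, and a cleaner $f=0$ case (you exhibit $u_n\equiv 0$ as a complete bounded orbit, so $\mathcal{A}^{\epsilon}(0,\lambda)$ is exactly $\{0\}$ and nonemptiness is explicit, a point the paper passes over). The paper's route stays entirely at the level of the semigroup and absorbing sets, so it never needs the complete-orbit characterization from Temam, and it incidentally shows that every ball of radius exceeding $\|f^{m}\|/(\lambda-\lambda^{*})$ absorbs; on the other hand it must redo the inner-product estimates for $\Lambda_m$ and $D^{-}_m$, which you avoid by citing the recursion already obtained in the proofs of Theorems \ref{theorem2.1} and \ref{theorem4.1}. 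Both arguments ultimately rest on the same recursion, and your derivation of part (ii), including the observation that $\lambda^{*}$ in \eqref{2.1-12} does not depend on $\lambda$ or $f$, makes explicit what the paper leaves implicit.
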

\begin{proof}
(i) We prove the second inequality in \eqref{theorem4.3-1}, and the first inequality follows using a similar method. Let $r_0$ be a radius that satisfies
\begin{equation*}
\frac{\|f^{m}\|}{\lambda-\lambda^{*}}<r_{0}\leq 1+\frac{\|f^{m}\|}{\lambda-\lambda^{*}}:=r^{*,m} \leq 1+\frac{\|f\|}{\lambda-\lambda^{*}} = r^{*}.
\end{equation*}
Subsequently, We prove that the ball $\textit{B}^{m}_{r_{0}}$ in $\mathbb R^{2m+1}$ is absorbing for the truncated system \eqref{4.2}. Taking the inner product of \eqref{4.2} with $u_{n}^{\epsilon,m}$, we have
\begin{equation*}
\|u_{n}^{\epsilon,m}\|^{2} = \left(u_{n-1}^{\epsilon,m}, u_{n}^{\epsilon,m}\right) + \epsilon \left( \nu \Lambda_{m} u_{n}^{\epsilon,m}
-\alpha u_{n}^{\epsilon,m} D^{-}_{m} u_{n}^{\epsilon,m} + \beta u_{n}^{\epsilon,m}(1-u_{n}^{\epsilon,m})(u_{n}^{\epsilon,m}-\gamma) -\lambda u_{n}^{\epsilon,m}+f^{m}, u_{n}^{\epsilon,m} \right).
\end{equation*}
Based on the form of matrices $D^{-}_{m}$ and $\Lambda_{m}$ in \eqref{4.1-1}-\eqref{4.1-2}, we get
\begin{equation*}
\begin{aligned}
&\epsilon\nu\left(\Lambda_{m} u_{n}^{\epsilon,m}, u_{n}^{\epsilon,m} \right) \\
=& \epsilon\nu
\sum\limits_{-m+1 \leq i \leq m-1}
\left( -u_{n,i-1}^{\epsilon,m} +2u_{n,i}^{\epsilon,m}-u_{n,i+1}^{\epsilon,m} \right)u_{n,i}^{\epsilon,m} +\epsilon\nu \left( 2u_{n,-m}^{\epsilon,m} - u_{n,-m+1}^{\epsilon,m} \right)u_{n,-m}^{\epsilon,m} +\epsilon\nu \left( -u_{n,m-1}^{\epsilon,m} +u_{n,m}^{\epsilon,m} \right)u_{n,m}^{\epsilon,m} \\
\leq & \epsilon\nu\sum\limits_{-m+1 \leq i \leq m-1} \left[ \frac{1}{2}|u_{n,i-1}^{\epsilon,m}|^2 + 3|u_{n,i}^{\epsilon,m}|^2 + \frac{1}{2}|u_{n,i+1}^{\epsilon,m}|^2 \right] + \frac{5}{2}\epsilon\nu|u_{n,-m}^{\epsilon,m}|^2 + \frac{1}{2}\epsilon\nu|u_{n,-m+1}^{\epsilon,m}|^2 + \frac{1}{2}\epsilon\nu|u_{n,m-1}^{\epsilon,m}|^2 + \frac{3}{2}\epsilon\nu |u_{n,m}^{\epsilon,m}|^2\\
\leq & 4\epsilon\nu \|u_{n}^{\epsilon,m}\|^{2},
\end{aligned}
\end{equation*} 
and 
\begin{equation*}
-\epsilon\alpha\left( u_{n}^{\epsilon,m} D^{-}_{m} u_{n}^{\epsilon,m}, u_{n}^{\epsilon,m} \right) \leq 2 \epsilon\alpha \sum\limits_{-m+1 \leq i \leq m}|u_{n,i}^{\epsilon,m}|^3 + \epsilon\alpha |u_{n,-m}^{\epsilon,m}|^3 \leq 2\epsilon\alpha\|u_{n}^{\epsilon,m}\|_3^3.
\end{equation*}
The inner products below are written in terms of norms
\begin{equation*}
\begin{aligned}
\epsilon\beta \left( u_{n}^{\epsilon,m}(1-u_{n}^{\epsilon,m})(u_{n}^{\epsilon,m}-\gamma), u_{n}^{\epsilon,m}\right) &= \epsilon\beta \sum\limits_{-m \leq i \leq m}\left[ -\left(u_{n,i}^{\epsilon,m} \right)^4 + (1+\gamma) \left( u_{n,i}^{\epsilon,m} \right)^3 - \gamma \left( u_{n,i}^{\epsilon,m} \right)^2 \right]\\
& = -\epsilon\beta \|u_{n}^{\epsilon,m}\|_4^4 + \epsilon\beta(1+\gamma)\|u_{n}^{\epsilon,m}\|_3^3 - \epsilon\beta\gamma \|u_{n}^{\epsilon,m}\|^2,
\end{aligned}
\end{equation*}  
\begin{equation*}
-\epsilon\lambda \left(u_{n}^{\epsilon,m}, u_{n}^{\epsilon,m}\right) = -\epsilon\lambda\|u_{n}^{\epsilon,m}\|^2.\hspace{3.2cm}
\end{equation*}   
For the remainder term, we use Young's inequality
\begin{equation*}
\left(u_{n-1}^{\epsilon,m}, u_{n}^{\epsilon,m}\right) + \epsilon\left(f^{m}, u_{n}^{\epsilon,m}\right) \leq \frac{1}{2}\|u_{n-1}^{\epsilon,m}\|^{2} + \frac{1}{2}\|u_{n}^{\epsilon,m}\|^{2} + \epsilon\frac{(\lambda-\lambda^{*})}{2}\|u_{n}^{\epsilon,m}\|^{2} + \frac{\epsilon}{2(\lambda-\lambda^{*})}\|f^{m}\|^{2}.
\end{equation*}   
Based on all the estimates above, we have
\begin{equation*} 
\|u_{n}^{\epsilon,m}\|^{2} \leq \frac{1}{2}\|u_{n-1}^{\epsilon,m}\|^{2} + \frac{1-\epsilon(\lambda-\lambda^{*})}{2}\|u_{n}^{\epsilon,m}\|^{2} + \frac{\epsilon}{2(\lambda-\lambda^{*})}\|f^{m}\|^{2},
\end{equation*} 
which can be recombined as
\begin{equation*} 
\|u_{n}^{\epsilon,m}\|^{2} \leq \frac{1}{1+\epsilon(\lambda-\lambda^{*})} \|u_{n-1}^{\epsilon,m}\|^{2} + \frac{\epsilon}{[1+\epsilon(\lambda-\lambda^{*})](\lambda-\lambda^{*})}\|f^{m}\|^{2}.
\end{equation*}  
By recursively deriving the above inequality, for any $u_0\in \textit{B}^{m}_{r}$ with $0<r\leq r^{*,m}$, we obtain
\begin{equation*} 
\begin{aligned}
\|u_{n}^{\epsilon,m}(u_0)\|^{2} &\leq \frac{1}{\left[1+\epsilon(\lambda-\lambda^{*})\right]^{n}} \|u_{0}\|^{2} + \frac{\epsilon}{\lambda-\lambda^{*}}\|f^{m}\|^{2}\sum\limits_{j=1}^{n}\frac{1}{\left[1+\epsilon(\lambda-\lambda^{*})\right]^{j}}\\
& \leq \frac{r^2}{\left[1+\epsilon(\lambda-\lambda^{*})\right]^{n}} + \frac{\|f^{m}\|^{2}}{(\lambda-\lambda^{*})^{2}}.
\end{aligned}
\end{equation*} 
Let $\|f^{m}\|/(\lambda-\lambda^{*})<r_0$. For any $r\in (0,r^{*,m}]$, there exists $N=N(r)$ such that for any $n\geq N$ and $u_0\in \textit{B}^{m}_{r}$, we have
\begin{equation*} 
\|u_{n}^{\epsilon,m}(u_0)\|^{2} \leq r^2_0.
\end{equation*}   
Therefore, $\textit{B}^{m}_{r_0}$ is bounded and absorbing for the truncated system \eqref{4.2}.

Because an attractor is contained within any absorbing set, we have
\begin{equation*} 
\mathcal{A}_{m}^{\epsilon}(f, \lambda) \subset \textit{B}^{m}_{r_0}~~\Rightarrow~~\|\mathcal{A}_{m}^{\epsilon}(f, \lambda) \| \leq  r_0,~~~\forall~r_0\in \left( \frac{\|f^{m}\|}{\lambda-\lambda^{*}}, r^{*,m}\right],~~~\forall\epsilon \in (0,\epsilon^{*}].
\end{equation*}
As $r_0\rightarrow \frac{\|f^{m}\|}{\lambda-\lambda^{*}}$, we get
\begin{equation*} 
\|\mathcal{A}_{m}^{\epsilon}(f, \lambda) \| \leq \frac{\|f^{m}\|}{\lambda-\lambda^{*}},~~~\forall \epsilon \in (0,\epsilon^{*}],~m\in \mathds{N}.
\end{equation*} 
The case when $f=0$ and Conclusion (ii) are directly derived from Conclusion (i).
\end{proof}

\section{Finitely dimensional approximation of random attractor}\label{sec:6}
In this section, we firstly provide the existence of the random attractor for the system \eqref{1.1}. Then we establish the upper semi-convergence between random attractor and global attractor. In the end, we derive the
convergence between random and deterministic truncated attractors.

The stochastic Burgers-Huxley lattice system \eqref{1.1} can be expressed in a more general form:
\begin{equation}\label{6.1}
\left\{\begin{array}{l}
d u=\left(\nu\Lambda u-\alpha uD^{-}u+\beta u(1-u)(u-\gamma)-\lambda u+f\right) d t +\epsilon u\circ d W,\\
u(x,0)=u_0(x),
\end{array}\right.
\end{equation}
where $\circ$ denotes the Stratonovich stochastic differential.

\subsection{Existence and upper semi-convergence of random attractor}

Suppose $W(t,\omega)(t\in \mathbb{R})$ represent the standard one-dimensional, two-sided Wiener process with the path $\omega(t)$, defined on the classical Wiener space $
\left( \Omega,\mathcal{F},\mathbb{P},\left\{ \theta_{t} \right\}_{t \in \mathbb{R}} \right)$, where
\begin{equation*}
\Omega = \left\{ \omega \in C\left( \mathbb{R},\mathbb{R} \right):\omega(0) = 0 \right\},
\end{equation*}
and the Borel $\sigma$-algebra $\mathcal{F}$ is generated by the compact-open topology. The shift operator is given by $\theta_{t}\omega( \cdot ) = \omega(t + \cdot ) - \omega( t )$, as detailed in \cite{Bates-2009,Li-2015}. With this framework, we consider the It\^{o} equation $dz + zdt = dW(t)$, which leads to the derivation of the stochastic Ornstein-Uhlenbeck process
\begin{equation*}
z\left( \theta_{t}\omega \right): = -  \int_{- \infty}^{0}e^{ s}\left( \theta_{t}\omega \right)(s)ds
= - \int_{- \infty}^{0}e^{s}\omega(t+s)ds+\omega(t),
\mspace{6mu} t \in \mathbb{R},\mspace{6mu}\omega \in \Omega.
\end{equation*}
Some important properties of the Ornstein-Uhlenbeck process can be found in \cite{Arnold-1998}. There exists a $\theta_{t}-$invariant set $\tilde{\Omega}\subseteq\Omega$ with $\mathbb P(\tilde{\Omega})=1$, for each $\omega\in \tilde{\Omega}$, $z(\theta_{t}\omega)$ is continuous in $t$ and
\begin{equation}\label{6.2}
\lim\limits_{t\rightarrow\pm\infty}\frac{|z(\theta_{t}\omega)|}{|t|}=0
~~\text{and}~~
\lim\limits_{t\rightarrow\pm\infty}\frac{1}{t}\int_{0}^{t}z(\theta_{t}\omega)dt=0.
\end{equation}
In the following parts, we focus solely on the space $\tilde{\Omega}$ instead of $\Omega$, and for simplicity, we will refer to $\tilde{\Omega}$ as $\Omega$.

In order to analyze the random dynamics of SPDEs, we transform the random evolutionary equation to a pathwise PDE with the random parameter. Suppose $u$ be the solution of \eqref{6.1} and set
\begin{equation*}
U^{\epsilon}(t,\tau,\omega,U_{\tau})=e^{-\epsilon z(\theta_{t}\omega)}u(t,\tau,\omega,u_{\tau}),~~~\text{with}~~ U_{\tau}=e^{-\epsilon z(\theta_{t}\omega)}u_{\tau},
\end{equation*}
here $z(\theta_{t}\omega)$ is the Ornstein-Uhlenbeck process. Thus
\begin{equation}\label{6.3}
dU^{\epsilon}=e^{-\epsilon z(\theta_{t}\omega)}du-\epsilon e^{-\epsilon z(\theta_{t}\omega)}u\circ dz(\theta_{t}\omega).
\end{equation}
According to \eqref{6.1} and \eqref{6.3}, we consider the equation with random coefficients as follows:
\begin{equation}\label{6.4}
\begin{aligned}
\frac{dU^{\epsilon}}{dt}=&\nu\Lambda U^{\epsilon}-\alpha e^{\epsilon z(\theta_{t}\omega)} U^{\epsilon}D^{-}U^{\epsilon}-\beta e^{2\epsilon z(\theta_{t}\omega)} \left(U^{\epsilon}\right)^3
+\beta(1+\gamma)e^{\epsilon z(\theta_{t}\omega)}\left(U^{\epsilon}\right)^2\vspace{2.0ex}\\
&
-\beta\gamma U^{\epsilon}-\lambda U^{\epsilon}+e^{-\epsilon z(\theta_{t}\omega)} f +\epsilon U^{\epsilon}z(\theta_{t}\omega),
\end{aligned}
\end{equation}
the initial value condition is 
\begin{equation*}
U_{0}=e^{-\epsilon z(\theta_{t}\omega)}u_{0}.
\end{equation*}
Let $\Phi^{\epsilon}$ denote the continuous random dynamical system related to the problem in 
\eqref{6.4}, which can be expressed as:
\begin{equation*}
\Phi^{\epsilon}\left( \cdot ,\omega,\Phi_{0} \right) = U^{\epsilon}\left( \cdot ,\omega,U_{0} \right) \in C^{1}\left( \lbrack 0, + \infty),\ell^2 \right).
\end{equation*}
A continuous random dynamical system
$\Gamma^{\epsilon}:\mathbb{R}^{+} \times \Omega \times \ell^2 \mapsto \ell^2$ is generated, described as follows:
\begin{equation}\label{6.5}
\Gamma^{\epsilon}(t,\omega)\Phi_{0} = \Phi^{\epsilon}\left( t,\omega,\Phi_{0} \right),\mspace{6mu}\forall\left( t,\omega,\Phi_{0} \right) \in \mathbb{R}^{+} \times \Omega \times \ell^2.
\end{equation}
Below, we present two lemmas regarding $\tilde{\mathcal{D}}$-absorption and tail estimates. 
$\tilde{\mathcal{D}}$ represents the universe of all tempered random sets in $X:=\ell^2$, and $\mathcal{D}\in\tilde{\mathcal{D}}$ if for a.e. $\omega \in \Omega$, the following condition is satisfied
\begin{equation}\label{6.6}
\lim\limits_{t\rightarrow +\infty}e^{-\beta t}\sup\{\| \Phi^{\epsilon}\|_{X} :\Phi^{\epsilon}\in \mathcal{D}\left( \theta_{- t}\omega \right)\} = 0,\mspace{6mu} \forall\beta>0,
\end{equation}
where $\|\cdot\|_X$ denotes the norm in $X$.

\begin{lemma}\label{lemma6.1}
	For any $\omega\in\Omega$, $\tau\in \mathbb{R}$ and $E=\{E(\tau,\omega):\tau\in\mathbb{R},\omega\in\Omega\}\in \mathcal D$, one can find a $T:=T(\tau,\omega,E,\epsilon)>0$ such that for all $t\geq T$ and $\Phi_{0}\in\mathcal{D}(\theta_{-t}\omega)$,
	\begin{equation}\label{6.7}
	\parallel \Phi^{\epsilon}\left( t,\theta_{- t}\omega,\Phi_{0}\right) \parallel_{X}^{2} \leq R(\epsilon,\omega),
	\end{equation}
	here
	\begin{equation}\label{6.8}
	R(\epsilon,\omega) = 1 + \frac{\|f\|^{2}}{\lambda-\lambda^*} \int_{-\infty}^{0}e^{-2\epsilon z(\theta_{s}\omega) -\int_{0}^{s} 2\epsilon z(\theta_{r}\omega)dr
		+(\lambda-\lambda^*) s} ds.
	\end{equation}
\end{lemma}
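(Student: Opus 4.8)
\section*{Proof proposal for Lemma \ref{lemma6.1}}

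The plan is to extract the pullback absorption bound from a single energy estimate for the transformed equation \eqref{6.4}, followed by a Gronwall argument with the random integrating factor and a change of variables that reproduces the explicit expression for $R(\epsilon,\omega)$. First I would take the inner product of \eqref{6.4} with $U^{\epsilon}$ in $\ell^2$. The diffusion term contributes $\nu(\Lambda U^{\epsilon},U^{\epsilon})\leq 4\nu\|U^{\epsilon}\|^2$ exactly as in \eqref{2.1-3}, the linear damping gives $-\lambda\|U^{\epsilon}\|^2$, and the multiplicative-noise term produces the extra contribution $\epsilon z(\theta_t\omega)\|U^{\epsilon}\|^2$. This reduces the problem to controlling the advective and reactive nonlinearities, which now carry the random factors $e^{\epsilon z(\theta_t\omega)}$ and $e^{2\epsilon z(\theta_t\omega)}$.

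The crucial step is the nonlinear estimate. Writing $z=z(\theta_t\omega)$ for brevity, the advection term satisfies $-\alpha e^{\epsilon z}(U^{\epsilon}D^{-}U^{\epsilon},U^{\epsilon})\leq 2\alpha e^{\epsilon z}\|U^{\epsilon}\|_3^3$ as in Lemma \ref{lemma2.3}, and the reaction terms yield $-\beta e^{2\epsilon z}\|U^{\epsilon}\|_4^4+\beta(1+\gamma)e^{\epsilon z}\|U^{\epsilon}\|_3^3-\beta\gamma\|U^{\epsilon}\|^2$. Collecting the cubic contributions, I would apply Young's inequality componentwise in the form $(2\alpha+\beta+\beta\gamma)e^{\epsilon z}|U^{\epsilon}_i|^3\leq \beta e^{2\epsilon z}|U^{\epsilon}_i|^4+\tfrac{(2\alpha+\beta+\beta\gamma)^2}{4\beta}|U^{\epsilon}_i|^2$. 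The point worth emphasizing is that the associated discriminant condition reads $e^{2\epsilon z}\big[(2\alpha+\beta+\beta\gamma)^2-4\beta D\big]\leq 0$, so the constant $D=\tfrac{(2\alpha+\beta+\beta\gamma)^2}{4\beta}$ works uniformly in $z$ and the random factors cancel. Together with the definition \eqref{2.1-12} of $\lambda^{*}$ and the forcing estimate $e^{-\epsilon z}(f,U^{\epsilon})\leq \tfrac{\lambda-\lambda^{*}}{2}\|U^{\epsilon}\|^2+\tfrac{e^{-2\epsilon z}}{2(\lambda-\lambda^{*})}\|f\|^2$, this yields
\begin{equation*}
\frac{d}{dt}\|U^{\epsilon}\|^2\leq \big[-(\lambda-\lambda^{*})+2\epsilon z(\theta_t\omega)\big]\|U^{\epsilon}\|^2+\frac{e^{-2\epsilon z(\theta_t\omega)}}{\lambda-\lambda^{*}}\|f\|^2 .
\end{equation*}

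Next I would integrate this linear inequality with the integrating factor $\mu(t)=\exp\big((\lambda-\lambda^{*})t-2\epsilon\int_0^t z(\theta_s\omega)\,ds\big)$, replace $\omega$ by $\theta_{-t}\omega$, and use $z(\theta_s\theta_{-t}\omega)=z(\theta_{s-t}\omega)$ together with the substitution $\sigma=s-t$. A direct computation then shows that the forcing contribution equals $\tfrac{\|f\|^2}{\lambda-\lambda^{*}}\int_{-t}^0 e^{-2\epsilon z(\theta_\sigma\omega)-2\epsilon\int_0^\sigma z(\theta_r\omega)dr+(\lambda-\lambda^{*})\sigma}\,d\sigma$, which is nonnegative and monotone in $t$ and increases exactly to $R(\epsilon,\omega)-1$; here the temperedness \eqref{6.2} of the Ornstein--Uhlenbeck process guarantees that the exponent behaves like $(\lambda-\lambda^{*})\sigma+o(|\sigma|)$ as $\sigma\to-\infty$, so the improper integral converges. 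The remaining initial-data contribution is $\mu^{-1}\|U_0\|^2=\exp\big(-(\lambda-\lambda^{*})t+2\epsilon\int_{-t}^0 z(\theta_\tau\omega)\,d\tau\big)\|\Phi_0\|^2$.

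The main obstacle is showing that this initial-data term vanishes in the pullback limit. Using \eqref{6.2} once more, $2\epsilon\int_{-t}^0 z(\theta_\tau\omega)\,d\tau=o(t)$, so the integrating factor decays like $e^{-(\lambda-\lambda^{*})t(1+o(1))}$; since $\Phi_0\in\mathcal{D}(\theta_{-t}\omega)$ with $\mathcal{D}\in\tilde{\mathcal D}$ tempered, condition \eqref{6.6} bounds $\|\Phi_0\|^2$ subexponentially, and choosing any rate $\beta<\lambda-\lambda^{*}$ forces $\mu^{-1}\|\Phi_0\|^2\to 0$. Hence there is $T=T(\tau,\omega,E,\epsilon)$ such that this term is $\leq 1$ for all $t\geq T$, and adding the bound $R(\epsilon,\omega)-1$ on the forcing term gives \eqref{6.7}. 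I would take particular care to justify the change of variables and the uniformity of the bound over $\Phi_0\in\mathcal{D}(\theta_{-t}\omega)$ (temperedness being applied to the supremum), which is the only place where the choice of the universe $\tilde{\mathcal D}$ genuinely enters.
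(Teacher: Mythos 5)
Your proposal is correct and follows essentially the same route as the paper's proof: the same energy estimate for \eqref{6.4}, the same Young's inequality that cancels the random exponential factors against the quartic term, Gronwall with the random integrating factor, the pullback substitution $\omega\mapsto\theta_{-t}\omega$, convergence of the improper integral via \eqref{6.2}, and temperedness \eqref{6.6} to absorb the initial-data term into the constant $1$ in $R(\epsilon,\omega)$. Your componentwise formulation of the Young step (so that the produced term is $\beta e^{2\epsilon z}\|U^{\epsilon}\|_4^4$ rather than $\beta e^{2\epsilon z}\|U^{\epsilon}\|^4$) is in fact a more careful rendering of the estimate the paper writes in norm notation, since the cancellation against $-\beta e^{2\epsilon z}\|U^{\epsilon}\|_4^4$ only works in the $\ell^4$ form.
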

\begin{proof}
	For convenience, we substitute $U^{\epsilon}$ with $U$ in the proof of this lemma. Taking the inner product of \eqref{6.4} with $U$
	\begin{equation}\label{6.8-1}
	\begin{aligned}
	\frac{1}{2}\frac{d }{d t}\|U\|^2 =&- \nu \|D^{+} U\|^2 -\alpha e^{\epsilon z(\theta_{t}\omega)} \left(U D ^{-}U, U \right) -  \beta e^{2\epsilon z(\theta_{t}\omega)}
	U^4 + \beta(1+\gamma) e^{\epsilon z(\theta_{t}\omega)} U^3\\
	& - (\beta\gamma+\lambda) U^2 + e^{-\epsilon z(\theta_{t}\omega)} 
	\left( f, U \right) + \epsilon U^2 z(\theta_{t}\omega).
	\end{aligned}
	\end{equation}
	By Young's inequality, it is easy to show
	\begin{equation*}
	e^{-\epsilon z(\theta_{t}\omega)} \left(f, U\right) \leq \frac{\lambda-\lambda^*}{2}\|U\|^2 + \frac{1}{2(\lambda-\lambda^*)}e^{-2\epsilon z(\theta_{t}\omega)} \|f\|^2,\hspace{3cm}
	\end{equation*}
	\begin{equation*}
	|-\alpha e^{\epsilon z(\theta_{t}\omega)} \left(U D ^{-}U, U \right)| =\left|-\alpha e^{\epsilon z(\theta_{t}\omega)} \sum_{i\in\mathbb{Z}} \left( U^2_iU_{i-1}-U_i^3 \right)\right| \leq 2 \alpha e^{\epsilon z(\theta_{t}\omega)} \|U\|^3,\hspace{1cm}
	\end{equation*}
	\begin{equation*}
	\left( 2 \alpha+\beta+\beta\gamma \right) e^{\epsilon z(\theta_{t}\omega)} \|U\|^3 \leq \beta e^{2 \epsilon z(\theta_{t}\omega)} \|U\|^4 + \frac{(2\alpha+\beta+\beta\gamma)^2}{4 \beta} \|U\|^2.\hspace{2.5cm}
	\end{equation*}
	From above, we have
	\begin{equation}\label{6.8-5}
	\begin{aligned}
	\frac{1}{2}\frac{d }{d t}\|U\|^2  &\leq  \left[4\nu+\frac{(2\alpha+\beta+\beta\gamma)^2}{4 \beta} - (\beta\gamma+\lambda) + \frac{\lambda-\lambda^*}{2}\right]\|U\|^2 
	+ \frac{1}{2(\lambda-\lambda^*)} e^{-2\epsilon z(\theta_{t}\omega)} \|f\|^2 + \epsilon z(\theta_{t}\omega) U^2\\
	&=\frac{\lambda^*-\lambda}{2}\|U\|^2 
	+ \frac{1}{2(\lambda-\lambda^*)} e^{-2\epsilon z(\theta_{t}\omega)} \|f\|^2 + \epsilon z(\theta_{t}\omega) U^2.
	\end{aligned}
	\end{equation}
	According to the Gronwall lemma on the interval $[\tau-t,r]~(r \geq \tau-t)$, and substituting $\omega$ by $\theta_{-t}\omega$, it follows that
	\begin{equation*}
	\|U_t\|^2 \leq  e^{\int_{-t}^{r-\tau} 2\epsilon z(\theta_{s}\omega)ds-(\lambda-\lambda^*)(r+t-\tau)}\|U_{\tau-t}\|^2
	+ \frac{1}{\lambda-\lambda^*}\int_{-t}^{r-\tau} e^{-2\epsilon z(\theta_{s}\omega) + \int_{s}^{r-\tau} 2\epsilon z(\theta_{r}\omega)dr + (\lambda-\lambda^*)(s + \tau - r)} \|f\|^2 ds.
	\end{equation*}
	Note that
	\begin{equation*}
		\begin{aligned}
	&\int_{-t}^{r-\tau} e^{-2\epsilon z(\theta_{s}\omega) + \int_{s}^{r-\tau} 2\epsilon z(\theta_{r}\omega)dr + (\lambda-\lambda^*)(s + \tau - r)} \|f\|^2 ds\\
	&=  e^{a(\tau-r)} e^{-\int_{r-\tau}^{0} 2\sigma z(\theta_{r}\omega)dr} \int_{-t}^{r-\tau} e^{-2\epsilon z(\theta_{s}\omega) - \int_{0}^{s} 2\epsilon z(\theta_{r}\omega)dr + (\lambda-\lambda^*)s} \|f\|^2 ds.
		\end{aligned}
	\end{equation*}
	By \eqref{6.2}, we find that there exists a $ T_{1}=T_{1}(\tau,\omega,E, \epsilon_0)(0<\epsilon<\epsilon_0)$ such that for any $t\geq T_{1}>0$, we have  
	\begin{equation*}
	|z(\theta_{-t}\omega)| \leq \frac{(\lambda-\lambda^*)t}{8\epsilon_0},~~~~~|\int_{0}^{-t} z(\theta_{r}\omega)dr| \leq \frac{(\lambda-\lambda^*)t}{8\epsilon_0}.
	\end{equation*}
	For any $t\geq T_{1}>0$, then we get 
	\begin{equation*}
	\int_{-t}^{T_1} e^{-2\epsilon z(\theta_{s}\omega) - \int_{0}^{s} 2\epsilon z(\theta_{r}\omega)dr + (\lambda-\lambda^*)s} \|f\|^2 ds
	\leq \int_{-t}^{-T_1} e^{\frac{\lambda-\lambda^*}{2}s} \|f\|^2 ds.
	\end{equation*}
	Since $f\in \ell^2$, we have
	\begin{equation}\label{6.8-2}
	\int_{-t}^{-T_1} e^{\frac{\lambda-\lambda^*}{2}s} \|f\|^2 ds< +\infty.
	\end{equation}
	Then we get
	\begin{equation*}
	\int_{-t}^{r-\tau} e^{-2\epsilon z(\theta_{s}\omega) - \int^{s}_{r-\tau} 2\epsilon z(\theta_{r}\omega)dr + (\lambda-\lambda^*)(s + \tau - r)} \|f\|^2 ds \leq
	\int_{-\infty}^{r-\tau} e^{-2\epsilon z(\theta_{s}\omega) - \int^{s}_{r-\tau} 2\epsilon z(\theta_{r}\omega)dr +(\lambda-\lambda^*)(s + \tau - r)} \|f\|^2 ds,
	\end{equation*}
	the integral is convergent due to \eqref{6.8-2}.
	Therefore, we yield
	\begin{equation*}
	\begin{aligned}
	\|U(r,\tau-t,\theta_{-\tau}\omega,U_{\tau-t})\|^2 \leq&  e^{\int_{-t}^{r-\tau} 2\epsilon z(\theta_{s}\omega)ds-(\lambda-\lambda^*)(r+t-\tau)}\|U_{\tau-t}\|^2\\
&	+\frac{1}{\lambda-\lambda^*} \int_{-\infty}^{r-\tau} e^{-2\epsilon z(\theta_{s}\omega) + \int_{s}^{r-\tau} 2\epsilon z(\theta_{r}\omega)dr + (\lambda-\lambda^*)(s + \tau - r)} \|f\|^2 ds.
\end{aligned}
	\end{equation*}
	In particular, when $r=\tau$, we have
	\begin{equation*}
	\|U(\tau,\tau-t,\theta_{-\tau}\omega,U_{\tau-t})\|^2 \leq  e^{\int_{-t}^{0} 2\epsilon z(\theta_{s}\omega)ds-at}\|U_{\tau-t}\|^2
	+ \frac{1}{\lambda-\lambda^*}\int_{-\infty}^{0} e^{-2\epsilon z(\theta_{s}\omega) + \int_{s}^{0} 2\epsilon z(\theta_{r}\omega)dr + (\lambda-\lambda^*)s} \|f\|^2 ds.
	\end{equation*}
	According to the properties of the Ornstein-Uhlenbeck process, it follows that
	\begin{equation*}
	\int_{-\infty}^{0} e^{-2\epsilon z(\theta_{s}\omega) + \int_{s}^{0} 2\epsilon z(\theta_{r}\omega)dr + (\lambda-\lambda^*)s} ds < +\infty.
	\end{equation*}
	Note that $\{E(\tau,\omega):\tau\in \mathbb{R},\omega\in\Omega\}\in \mathcal{D}$ is tempered, then for any $U_{\tau-t}\in E(\tau-t,\theta_{-t}\omega)$, we have 
	\begin{equation*}
	\lim_{t \rightarrow + \infty}e^{\int_{-t}^{0} (2\epsilon z(\theta_{s}\omega)-(\lambda-\lambda^*))ds}\|U_{\tau-t}\|^2=0.
	\end{equation*}
	We complete the proof.
\end{proof}

According to Lemma \ref{lemma6.1}, the random dynamical system $\Gamma^{\epsilon}(\cdot,\omega)$ has a random $\tilde{\mathcal{D}}$-absorbing set $\mathcal{E}\in\tilde{\mathcal{D}}$,
which is described by
\begin{equation}\label{6.R}
\mathcal{E}^{\epsilon}(\omega) = \left\{ \Phi^{\epsilon} \in X: \parallel \Phi^{\epsilon} \parallel _{X}^{2} \leq R(\epsilon,\omega) \right\},\mspace{6mu}\omega \in \Omega.
\end{equation}

\begin{lemma}\label{lemma6.2}
	For any $\delta>0, \mathcal{D}\in\tilde{\mathcal{D}}$ and $\omega\in \Omega$, there exists an $n> N_{1}$ such that for all $t\geq T_{2}(\delta,\omega,\mathcal{E}^{\epsilon}(\theta_{-t}\omega))$, $\Phi_{0}\in\mathcal{D}(\theta_{-t}\omega)$
	\begin{equation}
	\parallel U\left( t,\theta_{- t}\omega,U_{0} \right) \parallel_{X(|i| \geq I_{1}(\delta,\omega))}^{2}  \leq \delta.
	\end{equation}
\end{lemma}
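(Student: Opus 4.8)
The plan is to adapt the asymptotic‐tail argument of Theorem \ref{theorem3.2} and Lemma \ref{lemma6.1} to the random equation \eqref{6.4}, the only genuinely new feature being the Ornstein--Uhlenbeck weights $e^{\pm\epsilon z(\theta_t\omega)}$ multiplying the nonlinear and forcing terms. First I would reuse the cut-off function $\xi_k$ from Theorem \ref{theorem3.2}, which satisfies $\|D^+\xi_k\|\le c_1/k$, and take the inner product of \eqref{6.4} with $\xi_k U^\epsilon$ in $\ell^2$. The diffusion term is handled exactly as in \eqref{theorem3.2-3}--\eqref{theorem3.2-4}, producing $4\nu\sum_i\xi_{k,i}U_i^2$ plus a tail remainder of order $\tfrac{\nu c_1}{k}\|U\|^2$; the advection term $-\alpha e^{\epsilon z(\theta_t\omega)}(U D^-U,\xi_k U)$ is treated by the same summation-by-parts and Young estimate, leaving $2\alpha e^{\epsilon z(\theta_t\omega)}\sum_i\xi_{k,i}|U_i|^3$ and a remainder $\tfrac{c_1\alpha}{3k}e^{\epsilon z(\theta_t\omega)}\|U\|_3^3$.

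Next, the cubic reaction term contributes $-\beta e^{2\epsilon z}\sum\xi U^4+\beta(1+\gamma)e^{\epsilon z}\sum\xi U^3-\beta\gamma\sum\xi U^2$, and I would combine the $|U_i|^3$ contributions from advection and reaction via the weighted Young inequality $(2\alpha+\beta+\beta\gamma)e^{\epsilon z}|U_i|^3\le\beta e^{2\epsilon z}U_i^4+\tfrac{(2\alpha+\beta+\beta\gamma)^2}{4\beta}U_i^2$, exactly as in \eqref{2.1-4} and \eqref{6.8-5}, so that the quartic terms cancel. Together with the forcing estimate $e^{-\epsilon z}(f,\xi_k U)\le\tfrac{\lambda-\lambda^*}{2}\sum\xi U^2+\tfrac{1}{2(\lambda-\lambda^*)}e^{-2\epsilon z}\sum_{|i|\ge k}f_i^2$ and the multiplicative term $\epsilon z(\theta_t\omega)\sum\xi U^2$, this yields the differential inequality
\begin{equation*}
\frac{d}{dt}\sum_{i}\xi_{k,i}U_i^2+\left(\lambda-\lambda^*-2\epsilon z(\theta_t\omega)\right)\sum_i\xi_{k,i}U_i^2\le \frac{2\nu c_1}{k}\|U\|^2+\frac{2c_1\alpha}{3k}\|U\|_3^3+\frac{1}{\lambda-\lambda^*}e^{-2\epsilon z(\theta_t\omega)}\sum_{|i|\ge k}f_i^2,
\end{equation*}
which carries exactly the dissipative coefficient $\lambda-\lambda^*-2\epsilon z$ already seen in \eqref{6.8-5}.

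Then I would integrate by the Gronwall lemma on $[-t,0]$, replace $\omega$ by $\theta_{-t}\omega$, and invoke Lemma \ref{lemma6.1}: once $t\ge T(\tau,\omega,\mathcal E,\epsilon)$ the solution lies in the absorbing set \eqref{6.R}, so $\|U\|^2\le R(\epsilon,\omega)$, and since $\|U\|_3\le\|U\|_2$ also $\|U\|_3^3\le R(\epsilon,\omega)^{3/2}$ along the whole trajectory tail. This bounds the two $1/k$ remainders by $C/k$ with $C$ depending only on $R(\epsilon,\omega)$, and the forcing tail by $\tfrac{1}{\lambda-\lambda^*}\sum_{|i|\ge k}f_i^2$ times an integrable Ornstein--Uhlenbeck weight. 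Choosing $k=k_\delta$ large makes all three right-hand contributions smaller than $\delta/2$, while the homogeneous factor $e^{\int_{-t}^{0}(2\epsilon z(\theta_s\omega)-(\lambda-\lambda^*))\,ds}\|U_0\|^2$ tends to $0$ as $t\to\infty$ by the temperedness of $\mathcal D$, exactly as at the end of Lemma \ref{lemma6.1}. Setting $I_1(\delta,\omega)=2k_\delta$ and $T_2$ large enough then gives the claim.

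The main obstacle is the stochastic bookkeeping already met in Lemma \ref{lemma6.1}: I must verify that every exponential weight generated by the Gronwall integration is almost surely finite and uniformly controllable. Concretely, the integral $\int_{-\infty}^{0}e^{-2\epsilon z(\theta_s\omega)-\int_0^s 2\epsilon z(\theta_r\omega)\,dr+(\lambda-\lambda^*)s}\,ds$ weighting the forcing tail must converge, which follows from the sublinear growth \eqref{6.2}, and the homogeneous factor must vanish along the absorbing trajectory; both are secured by the estimates $|z(\theta_{-t}\omega)|\le\tfrac{(\lambda-\lambda^*)t}{8\epsilon_0}$ established inside the proof of Lemma \ref{lemma6.1}. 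Apart from this random control, every deterministic step is a verbatim repetition of the tail estimates already carried out for the discrete and continuous deterministic systems.
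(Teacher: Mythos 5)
Your proposal follows essentially the same route as the paper's proof: take the inner product of \eqref{6.4} with the cut-off-weighted solution, cancel the quartic term against the cubic contributions by the weighted Young inequality, arrive at a differential inequality with dissipation rate $\lambda-\lambda^{*}-2\epsilon z(\theta_{t}\omega)$, integrate by Gronwall in the pullback sense, and conclude from the absorbing set of Lemma \ref{lemma6.1}, temperedness, and a large cut-off radius. If anything you are more careful than the paper, which omits the $O(1/k)$ commutator remainders coming from the cut-off function and writes the forcing estimate with $\sum_{i\in\mathbb{Z}}|f_{i}|^{2}$ rather than the tail $\sum_{|i|\geq k}f_{i}^{2}$; your bookkeeping of these terms via the absorbing radius $R(\epsilon,\omega)$ is exactly what is needed to make the argument fully rigorous.
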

\begin{proof} 
	For convenience, we also substitute $U^{\epsilon}$ with $U$ during the process of proof. Taking the inner product of \eqref{6.4} with $\xi_{n}U$, where $\xi_n$ is defined as in Theorem \ref{theorem3.2},
	we get
	\begin{equation}\label{6.2-1}
	\begin{aligned}
	\frac{d}{dt}\sum_{i\in\mathbb{Z}}\xi_{n}|U_{i}|^{2}=& 2\nu \langle \Lambda U,\xi_{n}U\rangle-2\alpha e^{\epsilon z(\theta_{t}\omega)} \langle UD^{-}U,\xi_{n}U\rangle - 2\lambda \langle U,\xi_{n}U\rangle - 2\beta e^{2\epsilon z(\theta_{t}\omega)} \langle U^3,\xi_{n}U\rangle \vspace{2.0ex}\\
	&
	+ 2\beta(1+\gamma)e^{\epsilon z(\theta_{t}\omega)} \langle U^2,\xi_{n}U\rangle -  2\beta\gamma e^{\epsilon z(\theta_{t}\omega)} \langle U,\xi_{n}U\rangle + 2e^{-\epsilon z(\theta_{t}\omega)}\langle f,\xi_{n}U\rangle +2\epsilon \langle Uz(\theta_{t}\omega),\xi_{n}U\rangle.
	\end{aligned}
	\end{equation}
	By the Young's inequality, we have
	\begin{equation}
	2e^{-\epsilon z(\theta_{t}\omega)} \langle f,\xi_{n}U\rangle
	\leq (\lambda-\lambda^*)\sum_{i\in \mathbb{Z}}\xi_{n,i}|U_{i}|^{2}
	+\frac{1}{\lambda-\lambda^*} e^{-2\epsilon z(\theta_{t}\omega)}\sum_{i\in \mathbb{Z}}|f_{i}|^{2},
	\end{equation}
	\begin{equation}
	2\alpha e^{\epsilon z(\theta_{t}\omega)} \langle UD^{-}U,\xi_{n}U\rangle  = 2\alpha e^{\epsilon z(\theta_{t}\omega)} \sum_{i\in \mathbb{Z}} \xi_{n,i}(U_{i}^2 U_{i-1}-U_{i}^3)
	\leq 4\alpha e^{\epsilon z(\theta_{t}\omega)}\sum_{i\in \mathbb{Z}} \xi_{n,i}|U_{i}|^{3},
	\end{equation}
	\begin{equation}
	\left(4\alpha + 2\beta + 2\beta\gamma \right) e^{2\epsilon z(\theta_{t}\omega)} \sum_{i\in \mathbb{Z}} \xi_{n,i}|U_{i}|^3  \leq \beta e^{2\epsilon z(\theta_{t}\omega)}\sum_{i\in \mathbb{Z}} \xi_{n,i} |U_{i}|^{4} + \frac{(4\alpha + 2\beta + 2\beta\gamma)^2}{4\beta}\sum_{i\in \mathbb{Z}} \xi_{n,i} |U_{i}|^{2}.
	\end{equation}
	Using Gronwall's inequality on \eqref{6.2-1} from $ T_{k}=T_{k}(\omega)\geq0$ to $t\geq T_{k}$, and then substituting $\omega$ with $\theta_{-t}\omega$, we obtain
	\begin{equation}\label{J}
	\begin{aligned}
	&\hspace{0.5cm}\sum_{i\in \mathbb{Z}}\xi_{n}|U_{i}(t,\theta_{-t}\omega,U_{0}(\theta_{-t}\omega))|^{2} \leq e^{- (\lambda-\lambda^*)(t-T_{k})+\int_{T_{k}}^{t}2\epsilon z(\theta_{s-t}\omega)ds}
	\|U(T_{k},\theta_{-t}\omega,U_{0}(\theta_{-t}\omega))\|^{2}   \\
	&\hspace{5cm}
	+\frac{1}{\lambda-\lambda^*}\int_{T_{k}}^{t}e^{-(\lambda-\lambda^*)(t-\tau)
		+\int_{\tau}^{t}2\epsilon z(\theta_{s-t}\omega)ds}\cdot e^{-2\epsilon z(\theta_{\tau-t}\omega)}d\tau \sum_{i\in \mathbb{Z}}|f_{i}|^{2}=J_{1}+J_{2}.
	\end{aligned}
	\end{equation}
	By \eqref{6.R}, we can get $J_1 \rightarrow 0$ as $t \rightarrow +\infty$. This indicates that for any $\delta>0$, there exists a $T_{1}=T_{1}(\delta,\omega,\mathcal{E}^{\epsilon}(\theta_{-t}\omega))\geq T_{k}$ such that
	\begin{equation}\label{J.1}
	\begin{aligned}
	J_{1}(t) \leq\frac{\delta}{2}e^{-2z(\omega)}, ~~~t\geq T_{1}(\delta,\omega,\mathcal{E}^{\epsilon}).
	\end{aligned}
	\end{equation}
	According to 
	\begin{equation*}
	\begin{aligned}
	\int_{T_{k}}^{t}e^{- (\lambda-\lambda^*) (t-\tau)+\int_{\tau}^{t}2\epsilon z(\theta_{s-t}\omega)ds}\cdot e^{-2\epsilon z(\theta_{\tau-t}\omega)}d\tau
	\leq \int_{-\infty}^{0}e^{ (\lambda-\lambda^*) \tau-\int_{\tau}^{0}2\epsilon z(\theta_{s}\omega)ds}\cdot e^{-2\epsilon z(\theta_{\tau}\omega)}d\tau<\infty,
	\end{aligned}
	\end{equation*}
	and $f\in\ell^{2}$, there exists $I_{1}=I_{1}(\delta,\omega)\in \mathbb{N}$ such that
	\begin{equation}\label{J.2}
	\begin{aligned}
	J_{2}(t) \leq\frac{\delta}{2}e^{-2z(\omega)}, ~~~i> I_{1}.
	\end{aligned}
	\end{equation}
	In conclusion, by substituting \eqref{J.1} and \eqref{J.2} into \eqref{J}, we derive that
	\begin{equation}\label{J.3}
	\sum_{i\geq I_{1}}
	\xi_{n} |U_{i}(t,\theta_{-t}\omega,U_{0}(\theta_{-t}\omega))|^{2}
	\leq \delta e^{-2z(\omega)},~~~t> T_{1},~~i\geq I_{1}.
	\end{equation}
	Based on \eqref{J.3}, we can deduce that
	\begin{equation}\label{J.4}
	\sum_{i\geq I_{1}}\xi_{n} |u_{i}(t,\theta_{-t}\omega,u_{0}(\theta_{-t}\omega))|^{2} \leq \delta.
	\end{equation}
	This proof is completed.
\end{proof}

Note that a random compact set $\mathcal{A}(\omega)$ of $X$ is called a random $\tilde{\mathcal{D}}$-attractor for a random dynamical system $\Phi$ with semigroup $\mathcal{G}(\cdot)$ if $\mathcal{A}:=\{\mathcal{A}(\omega)\}\in\tilde{\mathcal{D}}$, and 
$\mathcal{A}$ satisfies the invariance property, meaning that for any $t\geq0$ and $\omega\in\Omega$, $\Phi(t,\omega)\mathcal{A}(\omega)=\mathcal{A}(\theta_{t}\omega)$. Additionally, $\mathcal{A}$ is $\tilde{\mathcal{D}}$-attracting, i.e., 
\begin{equation}
\lim\limits_{t\rightarrow\infty}\text{dist}_{X}\left( \Phi\left( t,\theta_{- t}\omega \right)\mathcal{D}\left( \theta_{- t}\omega \right),\mathcal{A}(\omega) \right) = 0,\mspace{6mu}\forall\mathcal{D} \in \tilde{\mathcal{D}},\mspace{6mu}\omega \in \Omega.
\end{equation}

The subsequent theorem establishes the existence of the random attractor, as well as the upper semi-convergence relationship between the random attractor and the global attractor.

\begin{theorem}\label{theorem6.3} The random dynamical system $\Phi^{\epsilon}$ generated by the problem \eqref{6.4} has a random attractor $\mathcal{A}(\omega)$ in $X$. Furthermore, $\mathcal{A}(\omega)$ is upper semi-convergence to the global attractor
	$\mathcal{A}$, which means that
	\begin{equation}\label{6.11}
	\lim\limits_{\epsilon\rightarrow 0}\text{dist}_{X}\left( \mathcal{A}^{\epsilon}(\omega),\mathcal{A} \right) = 0,\mspace{6mu} \mathbb{P}\text{-a.s.}\omega \in \Omega.
	\end{equation}
\end{theorem}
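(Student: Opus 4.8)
The plan is to split the proof into the two assertions of the statement.

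\textbf{Existence of the random attractor.} By the standard theory of random attractors for lattice systems (see \cite{Bates-2009, Li-2015}), $\Phi^{\epsilon}$ admits a unique random $\tilde{\mathcal D}$-attractor as soon as it possesses a closed random $\tilde{\mathcal D}$-absorbing set and is pullback $\tilde{\mathcal D}$-asymptotically compact. The absorbing set $\mathcal E^{\epsilon}(\omega)$ of \eqref{6.R} is supplied by Lemma \ref{lemma6.1}. For asymptotic compactness I would follow the usual lattice argument: given $U_{0}^{(j)}\in\mathcal D(\theta_{-t_{j}}\omega)$ with $t_{j}\to\infty$, Lemma \ref{lemma6.1} makes $\{\Phi^{\epsilon}(t_{j},\theta_{-t_{j}}\omega)U_{0}^{(j)}\}$ bounded in $\ell^{2}$, while Lemma \ref{lemma6.2} forces its tails $\sum_{|i|\geq I}|\cdot|^{2}$ to be uniformly small; boundedness gives a coordinatewise convergent subsequence (finite truncations being relatively compact), and uniform smallness of the tails upgrades this to convergence in the $\ell^{2}$-norm. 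Hence $\mathcal A^{\epsilon}(\omega)$ exists and equals the $\Omega$-limit set of $\mathcal E^{\epsilon}$.

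\textbf{Upper semi-convergence as $\epsilon\to0$.} I would verify the hypotheses of the standard upper semicontinuity criterion (as in \cite{Han-2020,Li-2023-2,Liu-2024}). \emph{First}, a uniform bound: in \eqref{6.8} the factors $e^{-2\epsilon z(\theta_{s}\omega)}$ and $e^{-\int_{0}^{s}2\epsilon z(\theta_{r}\omega)dr}$ tend to $1$ as $\epsilon\to0$, so by dominated convergence $R(\epsilon,\omega)\to 1+\frac{\|f\|^{2}}{(\lambda-\lambda^{*})^{2}}$, a finite limit bounded by the deterministic radius $(r^{*})^{2}$; thus $\{\mathcal A^{\epsilon}(\omega)\}_{0<\epsilon\leq\epsilon_{0}}$ lies in a fixed ball $B_{\rho(\omega)}$, and since Lemma \ref{lemma6.2} holds uniformly on $(0,\epsilon_{0}]$ the union $\bigcup_{0<\epsilon\leq\epsilon_{0}}\mathcal A^{\epsilon}(\omega)$ has uniformly small tails and is precompact in $\ell^{2}$. \emph{Second}, convergence of solutions: if $U_{0}^{\epsilon}\to u_{0}$ in $\ell^{2}$ then for every fixed $t>0$ one has $\|U^{\epsilon}(t,\omega,U_{0}^{\epsilon})-u(t,u_{0})\|\to0$, where $u$ solves \eqref{2.1}. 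Setting $w=U^{\epsilon}-u$ and subtracting \eqref{2.1} from \eqref{6.4}, I split each coefficient as $e^{\pm\epsilon z}=1+(e^{\pm\epsilon z}-1)$; the ``$1$''-parts reassemble the deterministic field and are absorbed by the Lipschitz constant $L_{r^{*}}$ of Lemma \ref{lemma2.1}, while the residual factors together with $\epsilon U^{\epsilon}z(\theta_{t}\omega)$ form an error bounded on $[0,t]$ by $C(\omega)g_{\epsilon}(t)$ with $g_{\epsilon}(t)\to0$, because $s\mapsto z(\theta_{s}\omega)$ is continuous (hence bounded) on $[0,t]$ and both solutions stay in $B_{\rho(\omega)}$. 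Gronwall then gives $\|w(t)\|\leq\big(\|U_{0}^{\epsilon}-u_{0}\|+C(\omega)g_{\epsilon}(t)\big)e^{L_{r^{*}}t}\to0$.

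\textbf{Conclusion and main obstacle.} Suppose \eqref{6.11} fails: there are $\eta_{0}>0$, $\epsilon_{k}\to0$ and $a_{k}\in\mathcal A^{\epsilon_{k}}(\omega)$ with $d(a_{k},\mathcal A)\geq\eta_{0}$. Fixing $t>0$ and using invariance, write $a_{k}=\Phi^{\epsilon_{k}}(t,\theta_{-t}\omega)b_{k}$ with $b_{k}\in\mathcal A^{\epsilon_{k}}(\theta_{-t}\omega)$; by the precompactness above a subsequence satisfies $b_{k}\to b$ and $a_{k}\to a$ in $\ell^{2}$, and the convergence of solutions identifies $a=S(t)b$ with $S$ the deterministic semigroup of \eqref{2.1}, where $b$ lies in a bounded set. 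Sending $t\to\infty$ and invoking that the global attractor $\mathcal A$ attracts bounded sets forces $d(a,\mathcal A)=0$, contradicting $d(a_{k},\mathcal A)\geq\eta_{0}$; this establishes \eqref{6.11}. The main obstacle is the second ingredient: the error terms are genuinely stochastic and $z(\theta_{s}\omega)$ is only pathwise bounded with a bound that grows in $t$, so the convergence must be carried out $\mathbb P$-a.s.\ on a \emph{fixed} finite horizon (keeping the exponential $e^{L_{r^{*}}t}$ harmless), and only afterwards may $t\to\infty$ be taken inside the limit-set argument; verifying that Lemmas \ref{lemma6.1}--\ref{lemma6.2} are uniform in $\epsilon$ on $(0,\epsilon_{0}]$, so that the limiting family of attractors is precompact, is the second delicate point.
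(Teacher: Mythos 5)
Your proposal is correct and follows essentially the same route as the paper: existence of the random attractor from the $\tilde{\mathcal{D}}$-absorbing set of Lemma \ref{lemma6.1} together with the tail estimates of Lemma \ref{lemma6.2}, and upper semi-convergence by verifying exactly the three ingredients the paper checks (convergence of the absorbing radii $R(\epsilon,\omega)$ as $\epsilon\rightarrow 0$, uniform-in-$\epsilon$ compactness via the tail estimates, and pathwise convergence of the cocycle to the deterministic semigroup by a Gronwall argument in the spirit of Lemma \ref{lemma6.4}). The only difference is presentational: where the paper invokes the abstract upper-semicontinuity criterion of \cite{Wang-2014}, you unfold its standard proof as a compactness-plus-contradiction argument.
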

\begin{proof}
	By Lemma \ref{lemma6.2}, the equations \eqref{1.1}  possess a unique global random $\tilde{\mathcal{D}}$ attractor given by
	\begin{equation*}
	\mathcal{A}(\omega)=\bigcap\limits_{\tau\geq T_{k}}\overline{\bigcup\limits_{t\geq\tau}\phi(t,\theta_{-t}\omega,\mathcal{E}(\theta_{-t}\omega))}\in X,~~\tau\in \mathbb{R},~~\omega\in\Omega.
	\end{equation*}
	Since the fact that the radius
	$R(\epsilon,\omega)$ in \eqref{6.8} is bounded as $\epsilon\rightarrow0$ and the tail-estimate in Lemma \ref{lemma6.2} is uniform for all $\epsilon\in(0,\epsilon^{*}]$, the uniform asymptotic compactness can be ensured.
	Moreover, by employing a method similar to that in Lemma \ref{lemma6.4} which will be presented later, we can establish the convergence of the random system $\Gamma^{\epsilon}(\cdot,\omega)$ to the semigroup $\mathcal{G}(\cdot)$ as $\epsilon\rightarrow0$. In summary, the upper semi-continuity \eqref{6.11} can be derived by applying the abstract result from \cite{Wang-2014}.
\end{proof}

\subsection{Convergence between random and deterministic truncated attractors}
Using the notations of the two matrices $\Lambda_m$ and $D_{m}^{-}$ introduced in the previous section, we consider the truncation of the random lattice system \eqref{6.4} in the space $X_{m}$ as follows:
\begin{equation}\label{6.14}
\begin{aligned}
\frac{dU^{\epsilon,m}}{dt}= & \nu \Lambda_{m} U^{\epsilon,m}-\alpha e^{\epsilon z(\theta_{t}\omega)} U^{\epsilon,m} D_{m}^{-} U^{\epsilon,m} - \beta e^{2\epsilon z(\theta_{t}\omega)} \left(U^{\epsilon,m}\right)^{3}
\vspace{2.0ex}\\
& +\beta(1+\gamma) e^{\epsilon z(\theta_{t}\omega)} \left(U^{\epsilon,m}\right)^{2} - (\beta \gamma+\lambda) U^{\epsilon,m} + e^{-\epsilon z(\theta_{t}\omega)}f^m+\epsilon U^{\epsilon,m}z(\theta_{t}\omega),
\end{aligned}
\end{equation}
with the initial condition $U^{\epsilon,m}(0)=U_0^\epsilon\in X_m$.
Then, for the truncated random system \eqref{6.14}, there exists a unique solution. This solution generates a continuous random dynamical system
$\Xi_m^\epsilon:\mathbb R^+\times \Omega\times X_m \to X_m$ corresponding to $\epsilon\in(0,\epsilon^*],m\in \mathbb N$, which is denoted as
\begin{equation*}
\Xi_m^\epsilon(t,\omega)U_0^\epsilon=\Phi^{\epsilon,m}(t,\omega,U_0^\epsilon),\forall t\geq 0,\omega\in\Omega,U_0^\epsilon\in X_m.
\end{equation*}
Set $\tilde{\mathcal{D}}_{m}$ be the restriction of $\tilde{\mathcal{D}}$ on $X_{m}$, that is, $\mathcal{D}\in\tilde{\mathcal{D}}_{m}$ if and only if
\begin{equation*}
\lim\limits_{t\rightarrow +\infty}e^{-\rho t}\sup\{\| \Phi^{\epsilon}\|_{X_{m}} :\Phi^{\epsilon}\in \mathcal{D}\left( \theta_{- t}\omega \right)\} = 0,\mspace{6mu} \omega\in\Omega,~~ \text{for all~}\rho>0.
\end{equation*}

For deriving the upper semi-convergence between the truncated semigroup and the global attractor, we proceed to demonstrate the following
$\tilde{\mathcal{D}}_{m}$-absorbing set and tail estimates for the truncated random system. These are correspond to Lemmas \ref{lemma6.1} and \ref{lemma6.2}.

\begin{lemma}\label{lemma6.5}
	Let $\epsilon\in(0,\epsilon^{*}],m\in \mathbb{N}$. Then the random dynamical system $
	\Xi_{m}^{\epsilon}$
	has a random $\tilde{\mathcal{D}}_{m}$-absorbing set given by
	\begin{equation}
	\mathcal{E}_{m}^{\epsilon}(\omega) = \left\{ \Phi_{ m}^{\epsilon} \in X_{m}: \parallel \Phi \parallel \right._{X_{m}}^{2}{\left. \leq R(\epsilon,\omega) \right\}},\mspace{6mu}\omega \in \Omega,
	\end{equation}
	where $R(\epsilon,\omega)$ is given by \eqref{6.8} and independent of $m$.
	Moreover, for each $\delta>0,\mathcal{D}_{m}\in\tilde{\mathcal{D}}_{m}$ and $\omega\in\Omega$, there are $T(\delta)>0$ and $I(\delta)\in \mathbb{N}$ such that, for all $t\geq T,m>I(\delta)$ and $
	\phi_{0}^{m} \in \mathcal{D}_{m}\left( \theta_{- t}\omega \right)$,
	\begin{equation*}
	|\Phi^{\epsilon,m}(t,\theta_{-t}\omega,\Phi_{0}^{m})| \leq\sum_{I\leq|i|\leq m}|u_{i}^{\epsilon,m}|^{2}
	\leq\delta.
	\end{equation*}
\end{lemma}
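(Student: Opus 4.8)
The plan is to reproduce the two energy estimates of Lemmas \ref{lemma6.1} and \ref{lemma6.2} in the finite-dimensional space $X_m = \mathbb{R}^{2m+1}$, using the matrix structure of $\Lambda_m$ and $D_m^-$ to control the boundary contributions at $i = \pm m$ exactly as in the deterministic truncation carried out in Lemma \ref{lemma4.1} and Theorem \ref{theorem4.3}. The key structural fact is that the random coefficients $e^{\epsilon z(\theta_t\omega)}$ and $e^{-\epsilon z(\theta_t\omega)}$ depend only on $t$ and factor out of every spatial sum, so the spatial bookkeeping is identical to the deterministic case while the temporal (random) estimates are identical to the infinite-dimensional case.

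First I would establish the absorbing set. Taking the inner product of \eqref{6.14} with $U^{\epsilon,m}$ in $X_m$, the diffusion and advection terms $\nu(\Lambda_m U^{\epsilon,m}, U^{\epsilon,m})$ and $-\alpha e^{\epsilon z(\theta_t\omega)}(U^{\epsilon,m} D_m^- U^{\epsilon,m}, U^{\epsilon,m})$ are bounded by $4\nu\|U^{\epsilon,m}\|^2$ and $2\alpha e^{\epsilon z(\theta_t\omega)}\|U^{\epsilon,m}\|_3^3$ respectively, using precisely the component-wise boundary estimates of Theorem \ref{theorem4.3}. Combining these with the Young's inequality bounds on the reaction and forcing terms from Lemma \ref{lemma6.1} produces the same differential inequality \eqref{6.8-5}, but with $f$ replaced by $f^m$. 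Since $\|f^m\| \leq \|f\|$, the Gronwall argument of Lemma \ref{lemma6.1} yields the identical bound $R(\epsilon,\omega)$ of \eqref{6.8}; in particular $R(\epsilon,\omega)$ is independent of $m$. Temperedness of $\mathcal{D}_m \in \tilde{\mathcal{D}}_m$ forces the homogeneous term to vanish as $t \to \infty$, so $\mathcal{E}_m^\epsilon(\omega)$ is $\tilde{\mathcal{D}}_m$-absorbing.

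For the tail estimate I would take the inner product of \eqref{6.14} with $\xi_k^m U^{\epsilon,m}$. This is where the two arguments merge: the random factors pull out of the sums as in Lemma \ref{lemma6.2}, while the cut-off commutator terms and the finitely many boundary components at $i = \pm m$ are absorbed into a $c_3/k$ remainder exactly as in the case analysis of Lemma \ref{lemma4.1}, the forcing contributing the tail $\sum_{|i| \geq k} f_i^2$. After rearrangement and an application of Gronwall on $[T_k, t]$ with $\omega$ replaced by $\theta_{-t}\omega$, I obtain an inequality of the form \eqref{J}: a homogeneous part bounded by the absorbing radius $R(\epsilon,\omega)$ (which tends to zero by temperedness) plus a remainder controlled by $\delta/2$ once $k$, and hence $I(\delta) = 2k$, is chosen large. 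Converting back via $u^{\epsilon,m} = e^{\epsilon z(\theta_t\omega)} U^{\epsilon,m}$ gives the claimed bound on $\sum_{I \leq |i| \leq m} |u_i^{\epsilon,m}|^2$.

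The hard part will be the simultaneous handling of the finite boundary at $i = \pm m$ and the random coefficients in the cut-off estimate. However, because $e^{\epsilon z(\theta_t\omega)}$ is spatially constant, the boundary analysis reduces verbatim to the deterministic computation of Lemma \ref{lemma4.1}; the only genuinely new point is confirming that the convergence of the random integrals---and thus the fact that $R(\epsilon,\omega)$ is the same finite radius for every $m$---survives truncation, which follows from $\|f^m\| \leq \|f\|$ together with the integrability bound \eqref{6.8-2}.
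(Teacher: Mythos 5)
Your proposal is correct and is essentially the proof the paper intends: the paper states Lemma \ref{lemma6.5} without an explicit proof, relying on exactly the combination you describe, namely the random energy and Gronwall estimates of Lemmas \ref{lemma6.1} and \ref{lemma6.2} (where the spatially constant factors $e^{\pm\epsilon z(\theta_t\omega)}$ pull out of all sums) merged with the finite-dimensional boundary and cut-off bookkeeping of Lemma \ref{lemma4.1} and Theorem \ref{theorem4.3}, together with $\|f^m\|\leq\|f\|$ to get a radius $R(\epsilon,\omega)$ independent of $m$. Your treatment is in fact slightly more careful than the paper's own sketch in Lemma \ref{lemma6.2}, since you retain the $c_3/k$ commutator remainder explicitly before choosing $I(\delta)=2k$.
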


\begin{theorem}\label{theorem6.2}
	Assume $\epsilon\in(0,\epsilon^{*}]$. For each $m\in\mathbb{N}$, the random dynamical system $\Xi_{m}^{\epsilon}$ has a random
	$\tilde{\mathcal{D}}$-attractor $\mathcal{A}_{m}^{\epsilon}(\omega) \subset X_{m}$ with the following upper semi-continuity:
	\begin{equation}\label{6.18}
	\lim\limits_{m\rightarrow\infty}\text{dist}_{X}\left( \mathcal{A}_{m}^{\epsilon}(\omega),\mathcal{A}^{\epsilon}(\omega) \right) = 0,\mspace{6mu}\forall\omega \in \Omega,
	\end{equation}
	where $\mathcal{A}_{m}^{\epsilon}(\omega)$ is  naturally embedded into $X$ and $\mathcal{A}^{\epsilon}(\omega)$  is the random attractor given in Theorem \ref{theorem6.3}.
\end{theorem}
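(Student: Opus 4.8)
The plan is to mirror the deterministic contradiction argument of Theorem~\ref{theorem4.2}, carried out inside the pullback random framework. For the existence half, I would note that for each fixed $m$ the phase space $X_m=\mathbb{R}^{2m+1}$ is finite-dimensional, so every bounded set is relatively compact and $\Xi_m^\epsilon$ is automatically asymptotically compact. Together with the $\tilde{\mathcal{D}}_m$-absorbing set $\mathcal{E}_m^\epsilon(\omega)$ supplied by Lemma~\ref{lemma6.5}, the standard existence theorem for pullback random attractors \cite{Bates-2009} then yields a unique random $\tilde{\mathcal{D}}$-attractor $\mathcal{A}_m^\epsilon(\omega)\subset\mathcal{E}_m^\epsilon(\omega)$, naturally embedded into $X$.

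For the upper semi-convergence \eqref{6.18} I would fix $\omega$ and argue by contradiction. Suppose there are $\eta_0>0$, $m_k\to\infty$ and $z^{m_k}\in\mathcal{A}_{m_k}^\epsilon(\omega)$ with $\text{dist}_X(\widetilde{z^{m_k}},\mathcal{A}^\epsilon(\omega))\geq\eta_0$. Since $\mathcal{A}_{m_k}^\epsilon(\omega)\subset\mathcal{E}_{m_k}^\epsilon(\omega)$ and the radius $\sqrt{R(\epsilon,\omega)}$ from \eqref{6.8} is independent of $m$, the null-expansions $\widetilde{z^{m_k}}$ are bounded in $X$. Transferring the tail estimate of Lemma~\ref{lemma6.5} to the attractor through its invariance gives $\sum_{I(\delta)\leq|i|\leq m_k}|z_i^{m_k}|^2<\delta$ uniformly in $k$, so $\{\widetilde{z^{m_k}}\}$ is precompact in $X$; passing to a subsequence, $\widetilde{z^{m_k}}\to z^*$ in $X$.

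Next I would recover a complete bounded orbit through $z^*$. By invariance of the random attractor each $z^{m_k}$ lies on a complete solution $t\mapsto U^{\epsilon,m_k}(t,\omega)$ of \eqref{6.14} on all of $\mathbb{R}$, with $U^{\epsilon,m_k}(0,\omega)=z^{m_k}$ and $U^{\epsilon,m_k}(t,\omega)\in\mathcal{A}_{m_k}^\epsilon(\theta_t\omega)$, hence $\|U^{\epsilon,m_k}(t,\omega)\|\leq\sqrt{R(\epsilon,\theta_t\omega)}$ for every $t$. Because the right-hand side of \eqref{6.14} (with its continuous random coefficients $e^{\epsilon z(\theta_t\omega)}$ and the uniform bound on the solution) controls $\frac{d}{dt}U_i^{\epsilon,m_k}$ uniformly on compact time intervals, each component family $\{U_i^{\epsilon,m_k}\}_k$ is equibounded and equicontinuous. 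Arzel\`a--Ascoli together with a diagonal extraction over $i\in\mathbb{Z}$ produces a subsequence converging to some $U^*_i(\cdot)$ uniformly on compacts, and the $m$-uniform tails then upgrade this to $\widetilde{U^{\epsilon,m_k}}(t)\to U^*(t)$ in $X$ for every $t$, with $U^*(0)=z^*$.

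Finally I would pass to the limit in the integral form of \eqref{6.14}: the componentwise convergences $\Lambda_{m_k}\to\Lambda$, $D_{m_k}^-\to D^-$ and dominated convergence in the cubic reaction and convection terms (all majorized by $\sqrt{R(\epsilon,\theta_t\omega)}$) show that $U^*$ solves the full random equation \eqref{6.4} on $\mathbb{R}$. Since $\|U^*(t)\|\leq\sqrt{R(\epsilon,\theta_t\omega)}$ and $R(\epsilon,\cdot)$ is tempered, $U^*$ is a complete tempered-bounded orbit of $\Phi^\epsilon$, so by the characterization of the pullback random attractor as the union of all such orbits, $z^*=U^*(0)\in\mathcal{A}^\epsilon(\omega)$, contradicting $\text{dist}_X(z^*,\mathcal{A}^\epsilon(\omega))\geq\eta_0$. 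The hard part, absent from the discrete Theorem~\ref{theorem4.2}, will be this continuous-time passage to the limit: extracting a single subsequence along which the truncated orbits converge uniformly in time on compacts (via Arzel\`a--Ascoli plus diagonalization over both the spatial index and a sequence of time windows) and justifying the limit inside the nonlinear cubic and random convection terms, all controlled by the $m$-uniform radius and tail estimates of Lemma~\ref{lemma6.5}; correctly invoking the complete-orbit description of the random attractor is the other delicate step.
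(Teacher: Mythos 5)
Your proposal is correct in outline, but it takes a genuinely different route from the paper. The paper's own proof (which is only a sketch) goes through the abstract perturbation framework for random attractors on expanding domains, citing \cite{LiF-2019,LiY-2019} in the spirit of \cite{Wang-2014}: it verifies (a) convergence of the truncated cocycles $\Xi_{m}^{\epsilon}\rightarrow\Gamma^{\epsilon}$ as $m\rightarrow\infty$, proved by a Gronwall comparison of solutions as in Lemma~\ref{lemma6.4}, (b) $m$-independence of the absorbing radius $R(\epsilon,\omega)$ from Lemma~\ref{lemma6.5}, and (c) uniform asymptotic compactness via the $m$-uniform tail estimates, and then invokes the abstract upper-semicontinuity theorem. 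You instead transplant the contradiction/complete-orbit argument that the paper itself uses for the deterministic Theorem~\ref{theorem4.2}: precompactness of the null-expansions $\widetilde{z^{m_k}}$ from the uniform radius and tails, Arzel\`a--Ascoli plus diagonal extraction to produce a limiting complete orbit $U^{*}$ of \eqref{6.4}, and membership $z^{*}=U^{*}(0)\in\mathcal{A}^{\epsilon}(\omega)$. Your route never needs a separate cocycle-convergence lemma (the Arzel\`a--Ascoli extraction plays that role along attractor orbits) and avoids the abstract theorem, at the price of the continuous-time compactness bookkeeping you correctly flag as the hard step; the paper's route outsources that bookkeeping to the cited references but must supply the Lemma~\ref{lemma6.4}-type estimate in the parameter $m$. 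Two points you should make explicit to close your argument: first, the ``complete tempered orbit lies in the attractor'' characterization you invoke is never stated in the paper for the random case (Lemma~\ref{lemma4.2} is only for the deterministic discrete system), but it reduces to the attraction property --- since $z^{*}=\Phi^{\epsilon}(t,\theta_{-t}\omega)U^{*}(-t)$ with $U^{*}(-t)\in\mathcal{E}^{\epsilon}(\theta_{-t}\omega)$ and $\mathcal{E}^{\epsilon}\in\tilde{\mathcal{D}}$, one gets $d\left(z^{*},\mathcal{A}^{\epsilon}(\omega)\right)\leq\text{dist}_{X}\left(\Phi^{\epsilon}(t,\theta_{-t}\omega)\mathcal{E}^{\epsilon}(\theta_{-t}\omega),\mathcal{A}^{\epsilon}(\omega)\right)\rightarrow0$, and closedness of $\mathcal{A}^{\epsilon}(\omega)$ finishes it; second, the backward half of the complete orbit through $z^{m_k}$ exists only by a selection argument from the invariance $\Xi_{m_k}^{\epsilon}(t,\omega)\mathcal{A}_{m_k}^{\epsilon}(\omega)=\mathcal{A}_{m_k}^{\epsilon}(\theta_{t}\omega)$ (it need not be unique), which is standard but worth saying. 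With those observations both ingredients your argument needs --- the $m$-independent radius and the $m$-uniform tails --- are exactly what Lemma~\ref{lemma6.5} provides, so your proof strategy is viable and arguably more self-contained than the paper's.
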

\begin{proof}  By Lemma \ref{lemma6.2}, the semi-dynamical system $\phi$ is asymptotically compact in $\tilde{\mathcal{D}}_{m}$, then the existence of the random attractor holds. Similar to the case of stochastic PDE on expanding domains as given in \cite{LiF-2019,LiY-2019}, we can obtain upper semi-continuity \eqref{6.18}.
	In addition, by the similar method as in Lemma \ref{lemma6.4} later, we can prove the convergence $\Gamma^{\epsilon}_{m}\rightarrow\Gamma^{\epsilon}$
	as $m\rightarrow\infty$.
	Moreover, we can verify the absorption and tail-estimate in Lemma \ref{lemma6.5} are uniform for enough large $m$, thus we can prove that the asymptotic
	compactness of $\Gamma^{\epsilon}_{m}(t,\omega)$ is uniform if $m$ is large enough .
\end{proof}

The deterministic lattice system \eqref{2.1} on the finite-dimensional space $X_{m}$ can be written as
\begin{equation}\label{6.15}
\frac{du^{m}}{dt}=  \nu \Lambda_{m} u^{m}-\alpha u^{m} D_{m}^{-} u^{m} + \beta  u^{m} \left(1-u^{m}\right)\left(u^{m}-\gamma\right) -\lambda u^{m} + f^m.
\end{equation}
For each $m\in \mathbb{N}$, there exists a unique solution of \eqref{6.15} and which generates a semigroup
$ \mathcal{G}_{m}:\mathbb{R}^{+} \times X_{m}\rightarrow X_{m}$ defined by
\begin{equation}\label{6.20}
\mathcal{G}_{m}(t)u_{0}^{m} = u^{m}\left( t,u_{0}^m \right),\mspace{6mu}
\forall t \geq 0,\mspace{6mu}u_{0}^{m} \in X_{m}.
\end{equation}
In order to prove the truncated semigroup $\mathcal{G}_{m}(\cdot)$ has a global attractor $\mathcal{A}_{m}$ in $X_{m}$, which also converges to $\mathcal{A}_{m}^{\epsilon}(\omega)$ as $\epsilon\rightarrow0$. We need show that, for each $m\in \mathbb{N}$, the semigroup $\mathcal{G}_{m}(\cdot)$ is the limit of the truncated random dynamical system $
\Xi_{m}^{\epsilon}(\cdot ,\omega)$ as $\epsilon\rightarrow0$.

\begin{lemma}\label{lemma6.4}
	For any $m\in \mathbb{N}$, when $|\Phi_{0}^{\epsilon}-u_{0}^m|_{X_{m}}\rightarrow0$ as $\epsilon\rightarrow0$, the solutions of \eqref{6.14} and \eqref{6.15} satisfy the following:
	\begin{equation}\label{lemma6.7}
	\lim\limits_{\epsilon\rightarrow 0}\left| \Phi^{\epsilon,m}\left( T,\theta_{- T}\omega,\Phi_{0}^{\epsilon} \right) - u^{m}\left( T,u_{0}^m \right) \right|_{X_{m}} = 0,\mspace{6mu}\forall T > 0,\mspace{6mu}\omega \in \Omega.
	\end{equation}
\end{lemma}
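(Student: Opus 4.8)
The plan is to estimate the difference $V(t):=\Phi^{\epsilon,m}(t,\theta_{-T}\omega,\Phi_0^\epsilon)-u^m(t,u_0^m)$ directly by an energy method on the finite-dimensional space $X_m=\mathbb R^{2m+1}$ and then invoke Gronwall's inequality. Write $U(t):=\Phi^{\epsilon,m}(t,\theta_{-T}\omega,\Phi_0^\epsilon)$ for the solution of \eqref{6.14} driven by $z(\theta_{t-T}\omega)$ on $[0,T]$, and $u(t):=u^m(t,u_0^m)$ for the solution of \eqref{6.15}. Since $t\mapsto z(\theta_{t-T}\omega)$ is continuous on the compact interval $[0,T]$, it is bounded there by some $M=M(T,\omega)$ independent of $\epsilon$; hence $e^{\pm\epsilon z(\theta_{t-T}\omega)}$ and $e^{2\epsilon z(\theta_{t-T}\omega)}$ are bounded uniformly for $\epsilon\in(0,\epsilon^*]$ and $t\in[0,T]$. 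Feeding these uniform bounds into an energy estimate for \eqref{6.14} (of the same type as in Lemma \ref{lemma6.1}, but on the fixed interval $[0,T]$) and into Lemma \ref{lemma2.3} for \eqref{6.15}, I first secure a radius $R=R(T,\omega)$, independent of $\epsilon$, such that $\|U(t)\|,\|u(t)\|\le R$ for all $t\in[0,T]$ and all small $\epsilon$. Thus both trajectories remain in the ball $\textit{B}^m_R$.

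Next I would subtract \eqref{6.15} from \eqref{6.14}, take the inner product of the resulting equation for $V$ with $V$ in $X_m$, and bound each term after splitting off the coefficient error. Setting
\begin{equation*}
g(\epsilon):=\sup_{t\in[0,T]}\max\Bigl\{|e^{\epsilon z(\theta_{t-T}\omega)}-1|,\ |e^{2\epsilon z(\theta_{t-T}\omega)}-1|,\ |e^{-\epsilon z(\theta_{t-T}\omega)}-1|,\ \epsilon|z(\theta_{t-T}\omega)|\Bigr\},
\end{equation*}
one has $g(\epsilon)\to0$ as $\epsilon\to0$ by \eqref{6.2} and continuity. Each nonlinear term is written as (bounded coefficient)$\times$(difference) plus (coefficient error)$\times$(bounded quantity); for example $e^{\epsilon z}UD_m^-U-uD_m^-u=e^{\epsilon z}(UD_m^-U-uD_m^-u)+(e^{\epsilon z}-1)uD_m^-u$. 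The difference parts are controlled by the local Lipschitz bounds of Lemma \ref{lemma2.1} on $\textit{B}^m_R$ (cf. \eqref{4.3}--\eqref{4.4}), giving $\|UD_m^-U-uD_m^-u\|\le 2\sqrt5 R\|V\|$ and an analogous bound for the quadratic term, while the coefficient-error parts are bounded by $g(\epsilon)C_R\|V\|$ for a constant $C_R$ depending only on $R$. The crucial simplification is the cubic term: after splitting, the leading piece $-\beta e^{2\epsilon z}\sum_i(U_i^3-u_i^3)(U_i-u_i)\le0$ by monotonicity of $x\mapsto x^3$ and positivity of $e^{2\epsilon z}$, so it can simply be discarded, leaving only the error $-\beta(e^{2\epsilon z}-1)(u^3,V)\le g(\epsilon)C_R\|V\|$. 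Together with $\nu(\Lambda_m V,V)\le4\nu\|V\|^2$, the quadratic estimate $(U^2-u^2,V)=\sum_i(U_i+u_i)(U_i-u_i)^2\le 2R\|V\|^2$, the dissipative term $-(\beta\gamma+\lambda)\|V\|^2$, and the forcing/noise errors $(e^{-\epsilon z}-1)(f^m,V)+\epsilon z(U,V)\le g(\epsilon)C_R\|V\|$, this yields
\begin{equation*}
\tfrac12\tfrac{d}{dt}\|V\|^2\le C_R\|V\|^2+g(\epsilon)C_R\|V\|\le \bigl(C_R+\tfrac12\bigr)\|V\|^2+\tfrac12 g(\epsilon)^2C_R^2 .
\end{equation*}

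Finally I would apply Gronwall's inequality on $[0,T]$ to obtain
\begin{equation*}
\|V(T)\|^2\le e^{(2C_R+1)T}\Bigl(\|\Phi_0^\epsilon-u_0^m\|^2+T\, g(\epsilon)^2 C_R^2\Bigr),
\end{equation*}
and then let $\epsilon\to0$: by hypothesis $\|\Phi_0^\epsilon-u_0^m\|\to0$ and by construction $g(\epsilon)\to0$, so $\|V(T)\|\to0$, which is exactly \eqref{lemma6.7}. The main obstacle is not any single estimate but the bookkeeping needed to make the a priori radius $R$ genuinely independent of $\epsilon$ on all of $[0,T]$: one must exploit that the Ornstein--Uhlenbeck factors are uniformly bounded on the compact interval (via the continuity in \eqref{6.2}) so that the same Young and Lipschitz estimates used in Lemmas \ref{lemma6.1} and \ref{lemma2.3} close with $\epsilon$-free constants. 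Once this uniform bound is in hand, the monotone sign of the cubic term removes the only genuinely super-linear difficulty, and the convergence follows from Gronwall in a routine way.
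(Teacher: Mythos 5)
Your proposal is correct, and it reaches the conclusion by a genuinely different treatment of the nonlinear terms than the paper. Both proofs share the same skeleton: form the difference equation from \eqref{6.14} and \eqref{6.15}, take the inner product with the difference in $X_m$, exploit that $z(\theta_{t-T}\omega)$ is continuous (hence bounded) on the compact interval so that $|e^{\pm\epsilon z}-1|$, $|e^{2\epsilon z}-1|$ and $\epsilon|z|$ are uniformly small (your $g(\epsilon)$ is exactly the paper's $\zeta$ in \eqref{6.21}), and close with Gronwall on $[0,T]$ with $\omega$ replaced by $\theta_{-T}\omega$. The divergence is in the decomposition: the paper invokes the transformation identity $U^{\epsilon}=e^{-\epsilon z(\theta_{t}\omega)}u$ to rewrite every nonlinear difference as a pure coefficient error times a power of $U^{\epsilon}$ alone (see \eqref{6.26}--\eqref{6.26-2}), so no Lipschitz estimate on differences is ever needed and the bound closes using only the absorbing radius \eqref{6.R}; this buys brevity, but that identity pairs $U^{\epsilon}$ with the solution of the \emph{stochastic} equation, not with the deterministic solution $u^{m}$ appearing in the lemma, so the paper's step rests on an identification that is at best loosely justified. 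Your splitting --- bounded coefficient times (difference of nonlinearities) plus coefficient error times (bounded quantity), with the Lipschitz bounds of Lemma \ref{lemma2.1} (cf.\ \eqref{4.3}--\eqref{4.4}) for the advection and quadratic parts and the monotonicity of $x\mapsto x^{3}$ to discard the cubic difference --- keeps the two solutions genuinely distinct and is the more robust argument; its price is the extra bookkeeping you correctly identify, namely establishing an a priori radius $R(T,\omega)$ independent of $\epsilon$ for $\Phi^{\epsilon,m}$ on $[0,T]$, which your uniform bound on the Ornstein--Uhlenbeck factors over the compact interval indeed supplies.
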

\begin{proof}
	For simplicity, we drop the superscript $m$. Let $u^{\epsilon}(t,\omega) = U^{\epsilon}\left( t,\omega,u_{0}^{\epsilon} \right)-u\left( t,u_{0} \right),\mspace{6mu}\forall t \geq 0.$
	Then, by subtracting \eqref{6.15} from \eqref{6.14}, one has
	\begin{equation}\label{6.22}
	\begin{aligned}
	\frac{du^{\epsilon}}{dt}=&\nu \Lambda_{m} u^{\epsilon} - \alpha \left( e^{\epsilon z(\theta_{t}\omega)} U^{\epsilon} D_{m}^{-} U^{\epsilon} - u D_{m}^{-} u \right) - \beta \left[ e^{2\epsilon z(\theta_{t}\omega)} \left(U^{\epsilon}\right)^{3} - u^{3}\right]
	\vspace{2.0ex}\\
	& +\beta(1+\gamma) \left[ e^{\epsilon z(\theta_{t}\omega)} \left(U^{\epsilon}\right)^{2} - u^2\right] - (\beta \gamma+\lambda) \left[ U^{\epsilon}-u\right] + \left( e^{-\epsilon z(\theta_{t}\omega)}-1 \right) f + \epsilon z(\theta_{t}\omega) U^{\epsilon}.
	\end{aligned}
	\end{equation}
	By taking the inner product of \eqref{6.22} with $u^{\epsilon}$ in $X_{m}$, we obtain
	\begin{equation}\label{6.23}
	\begin{aligned}
	\frac{d|u^{\epsilon}|^{2}}{dt} + 2\nu |D^{+}_{m}u^{\epsilon}|^{2}+2(\beta\gamma+\lambda)|u^{\epsilon}|^{2}
	= & -2\alpha \left(e^{\epsilon z(\theta_{t}\omega)} U^{\epsilon} D^{-}_{m} U^{\epsilon}-uD^{-}_{m}u\right)\cdot u^{\epsilon}\\
	& -2\beta\left[e^{2\epsilon z(\theta_{t}\omega)}\left(U^{\epsilon}\right)^{3}-u^{3}\right]\cdot u^{\epsilon} + 2\beta(1+\gamma)\left[e^{\epsilon z(\theta_{t}\omega)}\left(U^{\epsilon}\right)^{2}-u^2 \right] \cdot u^{\epsilon}\\
	& +2\left(e^{-\epsilon z(\theta_{t}\omega)}-1 \right) f^{m}\cdot u^{\epsilon}
	+2\epsilon z(\theta_{t}\omega) U^{\epsilon} \cdot u^{\epsilon}.
	\end{aligned}
	\end{equation}
	Recall that $U^{\epsilon}=e^{-\epsilon z(\theta_{t}\omega)}u$, which implies that
	\begin{equation}\label{6.26}
	\begin{aligned}
	-2\beta\left[e^{2\epsilon z(\theta_{t}\omega)} \left( U^{\epsilon} \right)^{3}-u^{3}\right] 
	& =-2\beta\left[e^{2\epsilon z(\theta_{t}\omega)}\left( U^{\epsilon} \right)^{3} - e^{3\epsilon z(\theta_{t}\omega)} \left( U^{\epsilon} \right)^{3} \right]      \\
	& =-2\beta e^{2\epsilon z(\theta_{t}\omega)}\left( U^{\epsilon} \right)^{3} \left(1 - e^{\epsilon z(\theta_{t}\omega)} \right),
	\end{aligned}
	\end{equation}
	
	\begin{equation}\label{6.26-1}
	2\beta(1+\gamma)\left[e^{\epsilon z(\theta_{t}\omega)}\left(U^{\epsilon}\right)^{2}-u^2 \right]= 2\beta(1+\gamma)e^{\epsilon z(\theta_{t}\omega)}\left(U^{\epsilon}\right)^{2} \left(1- e^{\epsilon z(\theta_{t}\omega)}\right),
	\end{equation}
	
	\begin{equation}\label{6.26-2}
	\begin{aligned}
	2\alpha \left(e^{\epsilon z(\theta_{t}\omega)} U^{\epsilon} D^{-}_{m} U^{\epsilon}-uD^{-}_{m}u\right)
	& = 2\alpha \left(e^{\epsilon z(\theta_{t}\omega)} U^{\epsilon} D^{-}_{m} U^{\epsilon}-e^{2\epsilon z(\theta_{t}\omega)} U^{\epsilon} D^{-}_{m} U^{\epsilon}\right)\\
	& = 2\alpha e^{\epsilon z(\theta_{t}\omega)} U^{\epsilon} D^{-}_{m} U^{\epsilon}\left(1- e^{\epsilon z(\theta_{t}\omega)} \right).
	\end{aligned}
	\end{equation}
	By Young's inequality, there holds
	\begin{equation}\label{6.27-0}
	2\left(e^{-\epsilon z(\theta_{t}\omega)}-1 \right) f\cdot u^{\epsilon} \leq \left(\beta\gamma+\lambda\right) |{u}^{\epsilon}|^{2}+ \frac{\left(e^{-\epsilon z(\theta_{t}\omega)}-1 \right)^2}{\beta\gamma+\lambda}|f|^{2}.
	\end{equation}
	Let $\tau\in \mathbb R, \omega\in \Omega, T>0$, and $\zeta\in[0,1)$, because $\omega$ is continuous on $\mathbb R$, it follows that there exists $\epsilon_{1}=\epsilon_{1}(\omega,T,\zeta)>0$ such that for each $\epsilon\in(0,\min\{\epsilon_{1},\epsilon^{*}\})$ and $t\in[\tau,\tau+T]$, we have
	\begin{equation}\label{6.21}
	|e^{-\epsilon z(\theta_{t}\omega)}-1|+|e^{\epsilon z(\theta_{t}\omega)}-1|<\zeta.
	\end{equation}
	As a result, we get
	\begin{equation}\label{6.27}
	\begin{aligned}
	&\frac{d(|u^{\epsilon}|^{2})}{dt} + \left(\beta\gamma+\lambda\right)|u^{\epsilon}|^{2} +
	+2\nu |D^{+}_{m}u^{\epsilon}|^{2}\\
	\leq & 3\zeta \left[ \beta^2 e^{4\epsilon z(\theta_{t}\omega)} |U^{\epsilon}|^{6} + \beta^2 \left( 1+\gamma \right)^{2} e^{2\epsilon z(\theta_{t}\omega)} |U^{\epsilon}|^{4} +\alpha^{2} e^{2\epsilon z(\theta_{t}\omega)}|U^{\epsilon}|^{2} |D^{-}_{m} U^{\epsilon}|^{2}\right]\\
	&+\zeta |u^{\epsilon}|^{2} + \frac{\zeta^2}{\beta\gamma+\lambda}|f|^{2} 
	+\epsilon z \left( |U^{\epsilon}|^2+|u^{\epsilon}|^2\right).
	\end{aligned}
	\end{equation}
	We apply Gronwall's inequality to \eqref{6.27} from 0 to $T$ with $\omega$ replaced by $\theta_{-t}\omega$ yields
	\begin{equation}
	\begin{aligned}
	|u^{\epsilon}(T,(\theta_{-T}\omega))|^{2}
	\leq&\exp\left\{-(\beta\gamma + \lambda - \zeta)T+\epsilon\int_{-T}^{0}z(\theta_{s}\omega)ds\right\} |
	u_{0}^{\epsilon}|^{2} \\
	&+\int_{-T}^{0}\exp\left\{-(\beta\gamma + \lambda - \zeta)s+\epsilon\int_{s}^{0}z(\theta_{r}\omega)dr\right\}
	\times \\
	& \times\left[3\zeta e^{2\epsilon z(\theta_{t}\omega)} \left( \beta^{2} e^{2\epsilon z(\theta_{t}\omega)} |U^{\epsilon}(s+T)|^{6}  + \beta^{2} \left( 1+\gamma \right)^{2}  |U^{\epsilon}(s+T)|^{4} \right.\right.\\
	& \left.\left. + \alpha^{2}|U^{\epsilon}(s+T)|^{2} \cdot |D^{-}_m U^{\epsilon}(s+T)|^{2} \right) + \frac{\zeta^2}{\beta\gamma + \lambda} \|f\|^{2} + \epsilon z(\theta_{s}\omega) |U^{\epsilon}(s+T)|^{2} \right]ds\\
	:=&Z_{1}+Z_{2}.
	\end{aligned}
	\end{equation}
	
	Observe that by combining the convergence of the initial data with the boundedness of 
	$z(\theta_{s}\omega)$ in $s\in[-T,0]$, we conclude that
	\begin{equation*}
	\begin{aligned}
	&\exp\left\{-(\beta\gamma + \lambda -  \zeta)T+\epsilon\int_{-T}^{0}z(\theta_{s}\omega)ds\right\}|u_{0}^{\epsilon}|^{2}\\
	\leq& 
	\exp\left\{-(\beta\gamma+\lambda-\zeta)T+\epsilon T\sup\limits_{s\in[-T,0]}z(\theta_{s}\omega)ds\right\}
	|u_{0}^{\epsilon}|^{2}\rightarrow0,
	~~\text{as}~~\epsilon\rightarrow0,
	\end{aligned}
	\end{equation*}
	it is obviously that $\zeta$ is closely related to $\epsilon$ by \eqref{6.21}, and $\zeta\rightarrow0$ as $\epsilon\rightarrow0$.
	Therefore, by the boundedness of $U^{\epsilon}$ in \eqref{6.R} and $f\in\ell^{2}$, when $\epsilon\rightarrow0$, $Z_{2}\rightarrow0$. In summary, one can deduce that
	\begin{equation}
	\lim\limits_{\epsilon\rightarrow 0}\left| \Phi^{\epsilon,m}\left( T,\theta_{- T}\omega,\Phi_{0}^{\epsilon} \right) - u^{m}\left( T,u_{0}^m \right) \right| = 0,\mspace{6mu}\forall T > 0,\mspace{6mu}\omega \in \Omega.
	\end{equation}
	We complete the proof.
\end{proof}

\begin{theorem}\label{theorem6.4} Assume $\epsilon\in(0,\epsilon^{*}]$, the truncated semigroup $\mathcal{G}_{m}(\cdot)$, generated from \eqref{6.15}, has a global attractor $\mathcal{A}_{m}$ in $X_{m}$, which converges to the attractor $\mathcal{A}_{m}^{\epsilon}(\omega)$ as $\epsilon\rightarrow0$, that is
	\begin{equation}\label{6.32}
	\lim\limits_{\epsilon\rightarrow 0}\text{dist}_{X_{m}}\left( \mathcal{A}_{m}^{\epsilon}(\omega),\mathcal{A}_{m} \right) = 0,\mspace{6mu}\forall\omega \in \Omega.
	\end{equation}
\end{theorem}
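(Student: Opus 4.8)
The plan is to establish the two assertions of the theorem separately: first the existence of a global attractor $\mathcal{A}_m$ for the deterministic truncated semigroup $\mathcal{G}_m(\cdot)$ in $X_m=\mathbb{R}^{2m+1}$, and then the upper semi-convergence $\mathcal{A}_m^{\epsilon}(\omega)\to\mathcal{A}_m$ as $\epsilon\to0$. For the first part I would take the inner product of \eqref{6.15} with $u^m$ in $X_m$ and reuse the matrix estimates for $\Lambda_m$ and $D_{m}^{-}$ already carried out in the proof of Theorem \ref{theorem4.3}, together with the Young-inequality bound built into the definition \eqref{2.1-12} of $\lambda^{*}$, to obtain
\begin{equation*}
\frac{d}{dt}\|u^m(t)\|^2\leq -(\lambda-\lambda^{*})\|u^m(t)\|^2+\frac{1}{\lambda-\lambda^{*}}\|f^m\|^2 .
\end{equation*}
Gronwall's lemma then gives $\limsup_{t\to\infty}\|u^m(t)\|^2\leq \|f^m\|^2/(\lambda-\lambda^{*})^2<(r^{*})^2$, so the closed ball $\textit{B}^m_{r^{*}}$ is a bounded absorbing set for $\mathcal{G}_m(\cdot)$. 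As $X_m$ is finite dimensional, bounded closed sets are compact, hence $\mathcal{G}_m(\cdot)$ is asymptotically compact and the standard theory (e.g.\ \cite{Temam-1997}) yields the global attractor $\mathcal{A}_m=\bigcap_{T\ge0}\overline{\bigcup_{t\ge T}\mathcal{G}_m(t)\textit{B}^m_{r^{*}}}\subset\textit{B}^m_{r^{*}}$.

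For the upper semi-convergence I would argue by contradiction, mirroring the scheme of Theorem \ref{theorem4.2}. Suppose \eqref{6.32} fails: there exist $\eta_0>0$, $\epsilon_k\to0$ and $b_k\in\mathcal{A}_m^{\epsilon_k}(\omega)$ with $\text{dist}_{X_m}(b_k,\mathcal{A}_m)\ge\eta_0$. By Lemma \ref{lemma6.5} each $\mathcal{A}_m^{\epsilon_k}(\omega)\subset\mathcal{E}_m^{\epsilon_k}(\omega)$, and since (as observed in the proof of Theorem \ref{theorem6.3}) $R(\epsilon,\omega)$ stays bounded as $\epsilon\to0$, the sequence $\{b_k\}$ is bounded in the finite-dimensional space $X_m$ and, along a subsequence, $b_k\to b^{*}$. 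Fixing any $T>0$ and using the random invariance $\mathcal{A}_m^{\epsilon_k}(\omega)=\Xi_m^{\epsilon_k}(T,\theta_{-T}\omega)\mathcal{A}_m^{\epsilon_k}(\theta_{-T}\omega)$, I write $b_k=\Phi^{\epsilon_k,m}(T,\theta_{-T}\omega,a_k)$ with $a_k\in\mathcal{A}_m^{\epsilon_k}(\theta_{-T}\omega)$. Since $R(\epsilon_k,\theta_{-T}\omega)\to 1+\|f\|^2/(\lambda-\lambda^{*})^2=:\rho_0^2$ as $\epsilon_k\to0$, a limit independent of $T$ and $\omega$, the $a_k$ are bounded by $\rho_0$ and, along a further subsequence, $a_k\to a^{*}_T$ with $\|a^{*}_T\|\le\rho_0$.

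Lemma \ref{lemma6.4} is the crucial bridge: applied with the convergent initial data $a_k\to a^{*}_T$ it gives
\begin{equation*}
b^{*}=\lim_{k\to\infty}\Phi^{\epsilon_k,m}(T,\theta_{-T}\omega,a_k)=u^m(T,a^{*}_T)=\mathcal{G}_m(T)a^{*}_T,\qquad\forall T>0 .
\end{equation*}
Because $\|a^{*}_T\|\le\rho_0$ uniformly in $T$, the attraction property of $\mathcal{A}_m$ (which attracts the fixed ball of radius $\rho_0$) yields
\begin{equation*}
\text{dist}_{X_m}(b^{*},\mathcal{A}_m)\le\sup_{\|x\|\le\rho_0}\text{dist}_{X_m}\left(\mathcal{G}_m(T)x,\mathcal{A}_m\right)\xrightarrow[T\to\infty]{}0 ,
\end{equation*}
so $b^{*}\in\mathcal{A}_m$ and $\text{dist}_{X_m}(b_k,\mathcal{A}_m)\to0$, contradicting $\ge\eta_0$. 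This establishes \eqref{6.32}.

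The main obstacle lies in the second part, and two points there deserve care. First, the limiting initial data $a^{*}_T$ coming from the pullback invariance depend on $T$, so the conclusion $b^{*}\in\mathcal{A}_m$ rests on a bound $\|a^{*}_T\|\le\rho_0$ that is \emph{uniform} in $T$; this is exactly what the $T$- and $\omega$-independence of the deterministic limit of $R(\epsilon,\theta_{-T}\omega)$ supplies, via the temperedness estimates \eqref{6.2} for the Ornstein--Uhlenbeck process and dominated convergence. Second, Lemma \ref{lemma6.4} must be invoked with the specific data $a_k$ produced by invariance, i.e.\ its conclusion must hold for \emph{converging} rather than merely fixed initial data; its statement is already phrased in this form, so no extra work is needed. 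The remaining ingredients (the dissipative estimate and finite-dimensional compactness) are routine. Alternatively, one may avoid the contradiction argument entirely and instead verify the hypotheses of the abstract upper-semicontinuity theorem of \cite{Wang-2014} — cocycle convergence (Lemma \ref{lemma6.4}) together with the uniform-in-$\epsilon$ absorption and tail estimate (Lemma \ref{lemma6.5}) — from which \eqref{6.32} follows immediately.
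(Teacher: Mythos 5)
Your proposal is correct, and its first half (the dissipative estimate, the absorbing ball $\textit{B}^m_{r^{*}}$, compactness of bounded sets in $X_m$, standard theory from \cite{Temam-1997}) coincides with the paper's construction of $\mathcal{A}_m$. For the convergence \eqref{6.32} you take a different, though closely related, route: the paper verifies the three hypotheses of the abstract upper-semicontinuity theorem of \cite{Wang-2014} — cocycle convergence (Lemma \ref{lemma6.4}), convergence of the absorbing radii $R(\epsilon,\omega)$ from \eqref{6.8} by dominated convergence, and precompactness of $\bigcup_{0<\epsilon\leq\epsilon^{*}}\mathcal{A}_m^{\epsilon}(\omega)$ — and then cites that theorem, whereas you inline a contradiction argument in the spirit of Theorem \ref{theorem4.2}, which amounts to a self-contained proof of the abstract theorem in this finite-dimensional setting. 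The ingredients are identical, but your version buys two things. First, it makes explicit the one genuinely delicate point: the pullback data $a^{*}_T$ produced by the invariance $\mathcal{A}_m^{\epsilon_k}(\omega)=\Xi_m^{\epsilon_k}(T,\theta_{-T}\omega)\mathcal{A}_m^{\epsilon_k}(\theta_{-T}\omega)$ must be bounded \emph{uniformly in} $T$, which you correctly extract from the fact that $\lim_{\epsilon\rightarrow0}R(\epsilon,\theta_{-T}\omega)=1+\|f\|^{2}/(\lambda-\lambda^{*})^{2}$ is independent of $T$ and $\omega$; this is exactly where a careless version of the argument would fail. Second, your argument needs only this limit, not the paper's claim (in its ``uniform compactness'' step) that $R(\epsilon,\omega)$ is increasing in $\epsilon$ — a claim that is not obvious, since the exponent in \eqref{6.8} changes sign with $z(\theta_{s}\omega)$; your dominated-convergence bound sidesteps it (one could also note that $R(\epsilon,\omega)$ is convex in $\epsilon$, so it is bounded on $(0,\epsilon^{*}]$ by the maximum of its endpoint limits). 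The paper's route is shorter because it delegates the subsequence/diagonal bookkeeping to the cited abstract result; your closing remark that one could instead just verify the hypotheses of \cite{Wang-2014} is precisely what the paper does.
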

\begin{proof}
	Note that the fact that $|f^{m}|\leq \|f\|$,
	then applying the methodology developed in Lemma \ref{lemma2.1}, we establish that the truncated semigroup $\mathcal{G}_{m}(\cdot)$ admits a bounded absorbing set, specifically given by
	\begin{equation*}
	\mathcal{E}_{m}:=\left\{u^m \in X_m:\|u^m\|_{X_{m}} \leq r^*:=1+\frac{\|f\|}{\lambda-\lambda^*}\right\}
	=\mathcal{C}_{X_{m}}(0,r^*).
	\end{equation*}
 Since $\mathcal{E}_{m}$ is compact in $X_{m}$, it can be obtained that $\mathcal{G}_{m}(\cdot)$ has a global attractor $\mathcal{A}_{m}$ in $X_{m}$.
	
	To achieve the result \eqref{6.32}, it suffices to verify three essential conditions of the abstract theorem(see \cite{Bates-2009,Li-2015,Wang-2014}).

	\textbf{Convergence of cocycles.} By Lemma \ref{lemma6.4}, when $|U_{0}^{\epsilon} - u_{0}^m|_{X_{m}}\rightarrow 0$ as $\epsilon\rightarrow0$, for each $t>0$ and $\omega\in \Omega$, it follows that
	\begin{equation*}
	\lim\limits_{\epsilon\rightarrow 0}\left| \Xi_{m}^{\epsilon}\left( t,\theta_{- t}\omega,U_{0}^{\epsilon} \right) - \mathcal{G}_{m}(t)u_{0}^m \right|_{X_{m}} = 0.
	\end{equation*}
	
	\textbf{Convergence of absorbing radii.} According to the Lebesgue dominated convergence theorem, one can deduced that
	\begin{equation*}
	\begin{array}{cl}
	{\lim\limits_{\epsilon\rightarrow 0}R(\epsilon,\omega)} 
	& {= 1 + \frac{\|f\|^{2}}{\lambda-\lambda^*} \int_{-\infty}^{0}e^{-2\epsilon z(\theta_{s}\omega) -\int_{0}^{s} 2\epsilon z(\theta_{r}\omega)dr
			+(\lambda-\lambda^*) s} ds} \\
	& {= 1 + \frac{\|f\|^{2}}{(\lambda-\lambda^*)^2} \leq {r^{*}}^{2}.}
	\end{array}
	\end{equation*}

	\textbf{Uniform compactness of attractors.} Observe that since 
	$\mathcal{A}_{m}^{\epsilon}(\omega) \subseteq \mathcal{E}_{m}^{\epsilon}(\omega)$ holds for all $\epsilon\in(0,\epsilon^{*}]$, it follows that for all $\omega\in \Omega$, the absorbing radius $R(\epsilon,\omega)$ is increasing with respect to $\epsilon\in(0,\epsilon^{*}]$, we have
	\begin{equation}
	\bigcup\limits_{0 < \epsilon \leq \epsilon^{*}}\mathcal{A}_{m}^{\epsilon}(\omega) \subseteq \bigcup\limits_{0 < \epsilon \leq \epsilon^{*}}\mathcal{E}_{m}^{\epsilon}(\omega) \subseteq \mathcal{E}_{m}^{\epsilon^{*}}(\omega).
	\end{equation}
	Therefore, we conclude that the union $
	\cup \left\{ \mathcal{A}_{m}^{\epsilon}(\omega):\mspace{6mu} 0 < \epsilon \leq \epsilon^{*} \right\}$ forms a precompact set in the finite-dimensional space $X_{m}$. This establishes the third condition and we complete the proof.
	
\end{proof}

\subsection*{Conflict of interest}
The authors have no conflicts to disclose.

\subsection*{Availability of date and materials}
Not applicable.

\end{document}